\theoremstyle{plain}
\newtheorem{theorem}{Theorem}[section]
\newtheorem*{theorem*}{Theorem}
\newtheorem{thmx}{Theorem}
\newtheorem{cor}[theorem]{Corollary}
\newtheorem{lem}[theorem]{Lemma}
\newtheorem{prop}[theorem]{Proposition}
\theoremstyle{definition}
\newtheorem{ex}[theorem]{Example}
\newtheorem{exs}[theorem]{Examples}
\newtheorem{dfn}[theorem]{Definition}
\newtheorem{rem}[theorem]{Remark}
\newtheorem{rems}[theorem]{Remarks}
\theoremstyle{remark}
\newcommand{\RNum}[1]{\uppercase\expandafter{\romannumeral #1\relax}}
\providecommand*{\twoheadrightarrowfill@}{%
  \arrowfill@\relbar\relbar\twoheadrightarrow
}
\providecommand*{\twoheadleftarrowfill@}{%
  \arrowfill@\twoheadleftarrow\relbar\relbar
}
\providecommand*{\xtwoheadrightarrow}[2][]{%
  \ext@arrow 0579\twoheadrightarrowfill@{#1}{#2}%
}
\providecommand*{\xtwoheadleftarrow}[2][]{%
  \ext@arrow 5097\twoheadleftarrowfill@{#1}{#2}%
}
\newcommand\setItemnumber[1]{\setcounter{enum\romannumeral\@enumdepth}{\numexpr#1-1\relax}}
\newcommand\norm[1]{\left\lVert#1\right\rVert}
\DeclareMathOperator\supp{supp}
\DeclareMathOperator\ev{ev}
\DeclareMathOperator\reg{reg}
\DeclareMathOperator\areg{areg}
\DeclareMathOperator\blup{blup}
\DeclareMathOperator\hblup{\mathcal{H}\!\blup}
\DeclareMathOperator\dom{Dom}
\newcommand{\R}{\mathbb{R}}
\newcommand{\N}{\mathbb{N}}
\newcommand{\Z}{\mathbb{Z}}
\newcommand{\cA}{\mathcal{A}}
\newcommand{\cC}{\mathcal{C}}
\newcommand{\cF}{\mathcal{F}}
\newcommand{\cH}{\mathcal{H}}
\newcommand{\cL}{\mathcal{L}}
\newcommand{\cQ}{\mathcal{Q}}
\newcommand{\cT}{\mathcal{T}}
\newcommand{\cW}{\mathcal{W}}
\newcommand{\Gr}{\mathcal{G}\mathrm{r}}
\newcommand{\tF}{\mathfrak{t}\cF}
\newcommand{\GR}{\mathrm{Grass}}
\newcommand{\chblup}{\cC\!\cH\!\blup}
\newcommand{\cblup}{\mathcal{C}\!\blup}
\begin{document}
\title{Blow-up groupoid of singular foliations}
\author{Omar Mohsen}
\date{}
\maketitle
\begin{abstract} 
	We introduce a blow-up construction of a smooth manifold along the singular leaves of an arbitrary singular foliation in the sense of Stefan and Sussmann, as well as a blow-up construction of the holonomy groupoid defined by Androulidakis and Skandalis. Our construction gives a locally compact locally Hausdorff groupoid, which can be regarded as a desingularisation of the singular foliation. We show that it retains some smooth structure.
 \end{abstract}
\setcounter{tocdepth}{2}
\tableofcontents
\section*{Introduction}

A singular foliation on a manifold $M$ is a submodule $\cF$ of $\Gamma_c(TM)$ of the $C^\infty(M)$-module of compactly supported vector fields such that $\cF$ is locally finitely generated and is closed under taking Lie brackets. We call the foliation regular if $\cF$ is the module of sections of a subbundle of $TM$. By a theorem of Stefan and Sussmann theorem \cite{Sussmann,Stefan}, a singular foliation gives a partition of $M$ into connected immersed submanifolds, so called leaves. The foliation is regular if and only if all leaves are of the same dimension.

For regular foliations, Ehresmann \cite{EhresmannFoliation} and Winkelnkemper \cite{WinkenkemperFoliation} defined the holonomy groupoid, see also \cite{PradinesFoliation,PradinesFoliation2}. This construction was generalised by Debord \cite{DebordFoliation2001} to some singular foliations and then by Androulidakis and Skandalis \cite{AS1} to arbitrary singular foliations. We will denote the holonomy groupoid by $\cH(\cF)\rightrightarrows M$. The holonomy groupoid plays a fundamental role in noncommutative geometry, see \cite{ConnesSurvey,ConnesSkandalis,ConnesTransverse,ConnesBook}. Nevertheless a major issue is that, for singular foliations, $\cH(\cF)$ is neither locally compact nor locally Hausdorff. It is well known in the literature that non Hausdorff locally compact groupoids are more complicated to study and they fail to satisfy many properties that are satisfied by locally compact Hausdorff groupoids. It is even less clear which results remain true for non locally compact non Hausdorff groupoids.

In this article, we construct a locally compact locally Hausdorff blowup groupoid $$\hblup(\cF)\rightrightarrows\blup(\cF)$$ of the holonomy groupoid of singular foliation. Notice that we blowup both the space of objects and the space of arrows. If $\cF$ is regular, then $\blup(\cF)=M$ and $\hblup(\cF)=\cH(\cF)$. Before we proceed with its properties we give an example to illustrate the construction \begin{ex}Consider $SL_n(\R)$ acting on $\R^n$. This gives a singular foliation whose leaves are the orbits. The holonomy groupoid is equal to $$\cH(\cF)=\big(\R^n\backslash\{0\}\times \R^n\backslash\{0\}\big)\sqcup SL_n(\R).$$ The groupoid  structure is the disjoint union of the pair groupoid on $\R^n\backslash\{0\}\times \R^n\backslash\{0\}$ and the group structure on $SL_n(\R)$. The topology is given by \begin{itemize}
\item $\R^n\backslash\{0\}\times \R^n\backslash\{0\}$ is an open subset with its usual topology
\item $SL_n(\R)$ is a closed subset with its usual topology
\item a net $(x_\alpha,y_\alpha)\in \R^n\backslash\{0\}\times \R^n\backslash\{0\}$ converges to $g\in SL_n(\R)$ if and only if there exists $g_\alpha\in SL_n$ such that $g_\alpha x_\alpha =y_\alpha $ and $g_\alpha \to g$. Therefore the topology is neither locally compact nor locally Hausdorff. The idea of the blowup of $\cH(\cF)$ is that if a net $(x_\alpha,y_\alpha)$ converges in $\cH(\cF)$, then (up to a subnet) the limit set is the coset $gH_x$ for some $x\in S^{n-1}$, where $H_x=\{T\in SL_n:Tx=x\}$. The blowup construction adds the direction of convergence to the space of objects and then quotients $SL_n$ by the $H_x$ to obtain the space of arrows.
\end{itemize}
The blowup construction for this foliation is as follows. The space of objects  $$\blup(\cF)=\blup(\R^n,0)=\R^n\backslash\{0\}\sqcup \mathbb{P}^{n-1}$$ is the classical blowup of $\R^n$ at $0$. The blowup groupoid is $$\hblup(\cF)=\bigg(\R^n\backslash\{0\}\times \R^n\backslash\{0\}\bigg)\sqcup \big(\R^n\backslash\{0\}\times \R^n\backslash\{0\}\big)/\R^*\subseteq \blup(\R^n\times \R^n,(0,0)).$$ The inclusion means that $\hblup(\cF)$ is an open subset of $\blup(\R^n\times \R^n,(0,0))$ whose complement is $\mathbb{P}^{n-1}\times \{0\}\sqcup\{0\}\times \mathbb{P}^{n-1}$.
\end{ex} 
The main result of this article is that the above construction can be done for an arbitrary singular foliation. Furthermore the resulting groupoid retains some smoothness. The results of this article can be summarised in the following theorem.
\begin{thmx}\label{intro thm}
\begin{enumerate}
\item The space $\blup(\cF)$ is a topological blowup of $M$. In the sense that $\blup(\cF)$ is a separable metrizable locally compact space with a natural map $\pi:\blup(\cF)\to M$ which is proper and there exists $U\subseteq M$ an open dense subset such that $\pi:\pi^{-1}(U)\to U$ is a homeomorphism.
\item Over the space $\blup(\cF)$, we construct a vector bundle denoted $T\cF$. If $u\in \pi^{-1}(U)$, then the fiber of $T\cF$ at $u$ can be naturally  identified with the tangent space at $\pi(u)$ of the leaf of $\cF$. For a general $u\in \blup(\cF)$, $T\cF_u$ projects onto the tangent space of the leaf at $\pi(u)$.
\item We construct a separable locally metrizable locally compact groupoid $\hblup(\cF)\rightrightarrows \blup(\cF)$ such that $\hblup(\cF)$ is a continuous family groupoid in the sense of Paterson \cite{ContFamilyGroupoids} whose Lie algebroid is $T\cF$. In particular if $x\in \blup(\cF)$, then $\hblup(\cF)_x$ is a smooth manifold whose tangent space at $x$ is equal to $T\cF_x$.
\item We construct a $*$-homomorphism between $C^*\cF$ defined in \cite{AS1} and $C^*\hblup(\cF)$ $$\fint:C^*\cF\to C^*\hblup(\cF).$$
\end{enumerate}
\end{thmx}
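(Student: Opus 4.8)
The four statements are established throughout the paper; here I sketch the approach. The organizing principle is to reduce every assertion to a local model on a \emph{bi-submersion} of Androulidakis--Skandalis \cite{AS1} and then to glue. Fix $x_0\in M$ and local generators $X_1,\dots,X_n$ of $\cF$; they give the anchor $\rho\colon \Omega\times\R^n\to TM$, $e_i\mapsto X_i$, over a neighbourhood $\Omega$, and the path-holonomy bi-submersion with $s(x,\xi)=x$ and $t(x,\xi)=\exp\!\big(\sum_i\xi_iX_i\big)(x)$ presents $\cH(\cF)$ locally as a quotient of compositions of such bi-submersions. The key observation, visible in the introductory example, is that the pathology of $\cH(\cF)$ is caused entirely by the jump of $\rho$, and that a single geometric operation --- the resolution of the Gauss-type map $x\mapsto \ker\rho_x$ --- simultaneously addresses the base, the algebroid and the arrows.

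For (1) and (2) I would take $\blup(\cF)$ to be, locally, the closure of the graph of the map $x\mapsto \ker\rho_x$, defined over the regular open dense set $U$ where $\rho$ has constant rank $r$, inside $M\times\mathrm{Grass}_{\,n-r}(\R^n)$; the blow-down $\pi$ is the first projection. In the example this recovers exactly $\blup(\R^n,0)$, since $\ker\rho_x=\{A:Ax=0\}$ depends only on the line $[x]$. Separability, metrizability and local compactness are inherited from those of $M$ and the compact Grassmannian, $\pi$ is proper as a closed subset of a proper projection, and $\pi$ is a homeomorphism over $U$ because there the graph is already a genuine graph. The bundle $T\cF$ is the tautological quotient: over the blow-up, $\overline{\ker\rho}$ becomes a rank-$(n-r)$ subbundle of $\Omega\times\R^n$ and one sets $T\cF:=(\Omega\times\R^n)/\overline{\ker\rho}$, a rank-$r$ bundle carrying the map to $\pi^*TM$ induced by $\rho$; over $\pi^{-1}(U)$ this map is injective and identifies $T\cF$ with the tangent bundle of the leaves, and in general it only projects onto them. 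Independence of the chosen generators over $U$ makes $T\cF$ canonical there, and its extension by closure is then canonical, so the local pieces glue.

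For (3) I would perform the matching blow-up of each bi-submersion along the preimage of the centre, transport $s$ and $t$ (they remain continuous-family submersions onto $\blup(\cF)$), and carry the partially-defined composition of bi-submersions to the blow-ups. The functoriality of the blow-up and of the deformation to the normal cone under the composition of bi-submersions is what makes the multiplication extend across the exceptional locus; verifying Paterson's axioms \cite{ContFamilyGroupoids} then yields the continuous family groupoid $\hblup(\cF)\rightrightarrows\blup(\cF)$. The source-fibres are smooth by construction, and differentiating $s$ along the unit section identifies the Lie algebroid with $T\cF$, whence $T\hblup(\cF)_x\cong T\cF_x$.

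For (4), realize $C^*\cF$ by half-densities supported on bi-submersions and $C^*\hblup(\cF)$ by half-densities on their blow-ups; the regularized (finite-part) pullback along the blow-down map, which accounts for the notation $\fint$, sends the former to the latter, and a direct computation shows it intertwines convolutions and involutions and extends to the completions. The principal obstacle lies in (1)--(3): one must show that the resolution performed on different bi-submersions representing the same holonomy is canonically identified, so that the equivalence relation of \cite{AS1} lifts to the blown-up bi-submersions with locally compact, locally Hausdorff quotient, and --- what is most delicate --- that the groupoid multiplication extends \emph{continuously} and \emph{fibrewise-smoothly} across the exceptional divisor. Precisely the bad behaviour of $\cH(\cF)$ that motivates the construction is what makes this compatibility the heart of the argument.
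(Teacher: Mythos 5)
Your treatment of (1)--(2) is essentially the paper's: over the regular set the paper's $\phi_y^{-1}(\mathfrak{h}_y)$ is exactly $\ker\phi_y=\ker\rho_y$ in your notation, so your ``closure of the graph of $x\mapsto\ker\rho_x$'' in $M\times\GR$ is the paper's definition of $\blup(\cF)$, your independence-of-generators remark is Proposition \ref{prop:gamma foliation} (the maps $L_y$ converging to the identity), and $T\cF$ as the tautological quotient $\cF_x/V$ is Proposition \ref{prop: Lie algebroid groups} and the paragraph following Remark \ref{rem:open}. (Minor point: the regular rank is only locally constant, so one needs the disjoint union $\GR=\sqcup_k\GR_k$ rather than a single $\mathrm{Grass}_{n-r}$.) The genuine gap is in (3). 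First, ``blow up each bi-submersion along the preimage of the centre'' is not meaningful here: there is no centre --- the singular locus of a singular foliation is merely a closed set, and even $\blup(\cF)$ itself is a graph closure, not a blow-up along a submanifold. More decisively, no operation that only \emph{adds} an exceptional divisor to the arrow space can produce $\hblup(\cF)$: its fibre over an exceptional point $(V,x)$ is a \emph{quotient} of the holonomy fibre, of dimension $\dim\cF_x-\dim V<\dim\cH(\cF)_x$ (in the introductory example $SL_n(\R)$ is replaced by the coset space $SL_n(\R)/H_x$), so directions must be collapsed, not resolved. The paper's mechanism is entirely different: it lets $\cH(\cF)$ act on $\blup(\cF)$ via functoriality of the blow-up under pullback by submersions (Theorems \ref{thm:funct blowup}, \ref{thm:action}), foliates each fibre $U_x$ of a bi-submersion by the regular foliation $\mathfrak{d}'s_U^{-1}(V)$ (Proposition \ref{prop:foliation}, which uses that $V$ is a Lie subalgebra), and sets $\hblup(\cF)=(\cH(\cF)\ltimes\blup(\cF))/N$ where $N$ consists of the leaves through units. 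The decisive analytic input, entirely absent from your sketch, is the periodic bounding lemma (Lemmas \ref{lem:periodic}, \ref{lemma 2}, \ref{lemma 2 gen}, \ref{lem: af}): it is what forces the leaves $\cL_s(\gamma,V)$ (equivalently, the subgroups $\exp(V)$ in the group case, Theorem \ref{closed}) to be \emph{closed} embedded submanifolds, whence the fibrewise quotients are Hausdorff manifolds and $\hblup(\cF)$ is locally compact with the charts of Theorem \ref{thm:closed leaves general}. You correctly flag the continuity/smoothness of the extended multiplication as ``the heart,'' but you supply no mechanism for it, and this is precisely the step your blow-up-of-bi-submersions plan cannot reach.

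In (4) you also misidentify the map: $\fint$ is not a regularized (finite-part) pullback along the blow-down but an honest \emph{pushforward} --- integration of a compactly supported half-density $f\in\Gamma_c(U,\Omega^{\frac12}U)$ over the closed leaves, $\int_{\cL_s(U,u,V)}f$, landing in $\big(\Omega^{\frac12}\hblup(\cF)\big)_{(q_U(u),(V,x))}$. No regularization is needed, precisely because the leaves are closed embedded and the support is compact; the half-density bookkeeping is supplied by the two canonical exact sequences $0\to\mathfrak{d}'s_U^{-1}(V)_u\to\ker(dr_U)_u\to\cF_{s_U(u)}/V\to 0$ and its $r$-counterpart, which identify $\Omega^{\frac12}U$ with $|\Lambda|^{\frac12}s^*T\cF\otimes|\Lambda|^{\frac12}r^*T\cF\otimes|\Lambda|^1\mathfrak{d}'s_U^{-1}(V)$. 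So the direction of the construction is integration along the collapsed directions of the quotient, dual to your proposal.
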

In general the space $\blup(\cF)$ isn't a smooth manifold. Nevertheless, \begin{itemize}
\item the space $\blup(\cF)$ is defined as a closed subset of a smooth manifold and thus it inherits a sheaf of smooth functions.
\item The pullback of smooth functions on $M$ by $\pi$ is smooth. Furthermore $\pi:\pi^{-1}(U)\to U$ is a diffeomorphism.
\item if $\blup(\cF)$ is a smooth manifold, then $\hblup(\cF)$ is a Lie groupoid.
\end{itemize}
\paragraph{Applications.}Since the groupoid $\hblup(\cF)$ is locally compact. Many of the notions of noncommutative geometry are well defined for it. For example amenability \cite{AnaRenaultGrps}, Baum-Connes conjecture \cite{BaumConnesHigsonConj}.\footnote{Cf. \cite{AS4} where the Baum-Connes conjecture using $\cH(\cF)$ is only defined for a special class of singular foliations. Furthermore one cannot use results like \cite{TuBC} to get Baum-Connes for amenable singular foliations (which to our knowledge wasn't defined before our work).}

Since $T\cF$ is a vector bundle, one can define orientation, spin structure, characteristic classes of $\cF$ to be those of $T\cF$. Furthermore since the groupoid $\hblup(\cF)$ admits a Lie algebroid, one can construct a Dirac operator (under the assumption $T\cF$ being spin) and thus obtain a longitudinal Dirac operator for singular foliations, see \cite{ConnesTransverse} for applications in the case of regular foliations. We plan to investigate this further in a future paper.

We also expect our groupoid to give a simple treatment of the blowup groupoid of stratified manifolds defined in \cite{DebordJMKduality,DebordJMRochonPseudodiff}.

 Finally this blowup groupoid is used by the author with Androulidakis, van Erp and Yuncken \cite{MohsenMaxHypo} to settle the Helffer Nourrigat conjecture \cite{HelfferNourrigatBook}. It is also used by the author \cite{MohsenIndexmaxhypo} to compute the index of some maximally hypoelliptic differential operators . Both are based upon the notion of half-continuity of fields of $C^*$-algebras, which we introduce in Section \ref{sec:semicont fields}.
\paragraph{Structure of the article.}
We think it is beneficial to consider first singular foliations which come from actions of connected Lie groups, which is treated in Section \ref{sec:group act}. In Section \ref{sec:general case}, the general case is treated. The reader can go directly to Section \ref{sec:general case} if they wish to.
\section*{Acknowledgments}
This work was done partially while the author was a postdoc in University of Muenster and was funded by the Deutsche Forschungsgemeinschaft (DFG, German Research Foundation) - Project-ID 427320536 - SFB 1442, as well as Germany's Excellence Strategy EXC 2044 390685587, Mathematics M\"{u}nster: Dynamics-Geometry-Structure.

The author thanks G. Skandalis for many helpful and illuminating discussions. The author also thanks Androulidakis his help with the formulation of Theorem \ref{thm:AZ}.
\section{Blowup construction}\label{sec:group act}
In this article, a smooth manifold is by definition Hausdorff. We will explicitly say non Hausdorff manifolds, if that's the case. If $M$ is a smooth manifold, $X$ a vector field on $M$, $x\in M$, $t\in \R$, then $\exp_X^t(x)$ denotes the flow of $X$ at time $t$ starting from $x$, if defined. If $t=1$, then we write $\exp_X(x)$.
\subsection{Regular points and blow-up space}
\paragraph{Regular points.}
Let $G$ be a connected Lie group acting smoothly on a smooth manifold $M$. The derivative of the action is the anchor map $\natural:\mathfrak{g}\to \Gamma(TM)$, where $\mathfrak{g}$ is the Lie algebra of $G$. One has \begin{equation}\label{eqn:bracket}
 \natural([X,Y])=[\natural(X),\natural(Y)],\quad X,Y\in \mathfrak{g}.
\end{equation}
Let $x\in M$. We will denote by $\natural_x:\mathfrak{g}\to T_xM$ the evaluation of $\natural$ at $x$. Let $\mathfrak{h}_x=\ker(\natural_x)$. The space $\natural_x(\mathfrak{g})\simeq \frac{\mathfrak{g}}{\mathfrak{h}_x}$ is the tangent space of the orbit $G\cdot x$ at $x$.
 \begin{dfn} A point $x$ is called regular if the function  $y\mapsto \dim(\mathfrak{h}_y)$ is continuous at $x$. The set of regular points is denoted $M_{\reg}$.
\end{dfn}
By \cite[Prop 2.1]{AS1}, the set $M_{\reg}$ is an open dense set.
\paragraph{Blow-up space.}Let $$\GR(\mathfrak{g})=\sqcup_{0\leq k\leq \mathfrak{g}}\GR_k(\mathfrak{g})$$ where $\GR_k(\mathfrak{g})$ is the Grassmannian manifold of linear subspaces $\mathfrak{g}$ of dimension $k$. We equip $\GR(\mathfrak{g})$ with the disjoint union topology. Let \begin{align*}
 i:M_{\reg}\to  \GR(\mathfrak{g})\times M,\quad i(x)=(\mathfrak{h}_x,x).
\end{align*}
The map $i$ is a continuous embedding. The blow-up space is defined by $$\blup(G\ltimes M):=\overline{i(M_{\reg})}\subseteq \GR(\mathfrak{g})\times M.$$
It is convenient to also define for $x\in M$, $$\blup(G\ltimes M)_x  :=\{V\in \GR(\mathfrak{g}):(V,x)\in \blup(G\ltimes M)\}.$$ By compactness of Grassmannian spaces, $\blup(G\ltimes M)_x$ is a nonempty closed subset of $\GR(\mathfrak{g})$.
 \begin{prop} \label{prop:gamma prop}Let $x\in M$, $V\in \blup(G\ltimes M)_x$.
\begin{enumerate}
\item If $x\in M_{\reg}$, then $\blup(G\ltimes M)_x=\{\mathfrak{h}_x\}$.
\item One has $V\subseteq \mathfrak{h}_x$.
\item The vector subspace $V$ is a Lie subalgebra of $\mathfrak{g}$.
\item If $g\in G$, then $Ad(g)V\in \blup(G\ltimes M)_{g\cdot x}$.
\end{enumerate}
\end{prop}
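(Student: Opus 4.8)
My plan is to derive all four assertions from two elementary facts about the Grassmannian together with the bracket identity \eqref{eqn:bracket}, exploiting that $\GR(\mathfrak{g})\times M$ is metrizable so that the closure $\blup(G\ltimes M)=\overline{i(M_{\reg})}$ can be probed by sequences. I would fix $x\in M$ and $V\in\blup(G\ltimes M)_x$ and choose $x_n\in M_{\reg}$ with $i(x_n)=(\mathfrak{h}_{x_n},x_n)\to(V,x)$, so $x_n\to x$ and $\mathfrak{h}_{x_n}\to V$; since $\GR(\mathfrak{g})$ carries the disjoint-union topology, convergence forces $\dim\mathfrak{h}_{x_n}=\dim V=:k$ for $n$ large. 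The two facts I would use about convergence $W_n\to W$ in $\GR_k(\mathfrak{g})$ are: (i) if $w_n\in W_n$ and $w_n\to w$ then $w\in W$ (closedness of the tautological incidence set); and (ii) every $w\in W$ is a limit of some $w_n\in W_n$.

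For (2), I would take $\xi\in V$, use (ii) to find $\xi_n\in\mathfrak{h}_{x_n}$ with $\xi_n\to\xi$, and note $\natural(\xi_n)(x_n)=0$; continuity of the evaluation $(\eta,y)\mapsto\natural(\eta)(y)$ then gives $\natural(\xi)(x)=0$, so $\xi\in\mathfrak{h}_x$ and $V\subseteq\mathfrak{h}_x$. For (3), the key preliminary observation is that each $\mathfrak{h}_y=\ker\natural_y$ is itself a subalgebra: if $\natural(\xi)(y)=\natural(\eta)(y)=0$, then the bracket of two vector fields with a common zero vanishes at that point, so \eqref{eqn:bracket} gives $\natural([\xi,\eta])(y)=[\natural(\xi),\natural(\eta)](y)=0$. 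Then, given $\xi,\eta\in V$, I would use (ii) to pick $\xi_n,\eta_n\in\mathfrak{h}_{x_n}$ converging to $\xi,\eta$; bilinearity and continuity of the bracket give $[\xi_n,\eta_n]\to[\xi,\eta]$ with $[\xi_n,\eta_n]\in\mathfrak{h}_{x_n}$, so (i) yields $[\xi,\eta]\in V$.

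For (1), I would use that at a regular point the dimension $y\mapsto\dim\mathfrak{h}_y$ is constant, say $=k$, on a neighborhood $U$ of $x$; hence $\natural_y$ has constant rank there and $y\mapsto\ker\natural_y=\mathfrak{h}_y$ is a continuous map $U\to\GR_k(\mathfrak{g})$. With $x_n\to x$ as above, $x_n\in U$ eventually, so $\mathfrak{h}_{x_n}\to\mathfrak{h}_x$, and uniqueness of limits in the Hausdorff space $\GR(\mathfrak{g})$ forces $V=\mathfrak{h}_x$; thus $\blup(G\ltimes M)_x=\{\mathfrak{h}_x\}$.

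For (4), I would first identify $\mathfrak{h}_y$ with the isotropy subalgebra: $\natural_y(\xi)=0$ is equivalent to $\exp(t\xi)\cdot y=y$ for all $t$, so $\mathfrak{h}_y=\operatorname{Lie}(G_y)$, and $G_{g\cdot x}=gG_xg^{-1}$ gives $\mathfrak{h}_{g\cdot x}=Ad(g)\mathfrak{h}_x$; in particular $M_{\reg}$ is $G$-invariant. I would then consider the homeomorphism $\Phi_g\colon(W,y)\mapsto(Ad(g)W,g\cdot y)$ of $\GR(\mathfrak{g})\times M$, with inverse $\Phi_{g^{-1}}$. Since $\Phi_g(i(y))=i(g\cdot y)$, it preserves $i(M_{\reg})$ and hence its closure $\blup(G\ltimes M)$; applying it to $(V,x)$ yields $(Ad(g)V,g\cdot x)\in\blup(G\ltimes M)$, which is the claim. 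I expect the only delicate points to be the two Grassmannian facts (i)--(ii) and the constant-rank continuity used in (1); all are standard, but they are where the disjoint-union topology of $\GR(\mathfrak{g})$---in particular that convergence pins down the dimension of the limit---must be handled with care, while everything else follows directly from \eqref{eqn:bracket} and continuity of the anchor.
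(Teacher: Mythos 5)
Your proposal is correct and follows essentially the same route as the paper, whose proof is just a terse version of the same limit arguments: parts (1)--(2) are declared obvious (your constant-rank continuity of $y\mapsto\mathfrak{h}_y$ near a regular point and the closedness of the incidence set are exactly what makes them so), part (3) is the paper's ``limit of Lie subalgebras'' with your added verification that each $\mathfrak{h}_y$ is a subalgebra, and part (4) uses the identity $Ad(g)\mathfrak{h}_{x_n}=\mathfrak{h}_{g\cdot x_n}$ along a sequence, which your homeomorphism $\Phi_g$ of $\GR(\mathfrak{g})\times M$ merely repackages. No gaps.
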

\begin{proof}
1 and 2 are obvious. For 3, since $V$ is the limit of Lie subalgebras, $V$ is also a Lie subalgebra. For 4, let $V\in \blup(G\ltimes M)_x$, $x_n\in M_{\reg}$ such that $x_n\to x$ and $\mathfrak{h}_{x_n}\to V$. Since $Ad(g)\mathfrak{h}_{x_n}=\mathfrak{h}_{g \cdot x_n}$, and $g \cdot x_n\to g\cdot x$. One obtains that 
$ \lim_{n\to \infty}Ad(g)\mathfrak{h}_{x_n}=Ad(g)V \in \blup(G\ltimes M)_{g\cdot x}$.
\end{proof}
\begin{ex}Consider $G=\R$ acting on $M=\R$ by the flow of the vector field $X=\rho(x)\frac{\partial }{\partial x}$, where $\rho $ is any smooth function such $\rho>0$ on $]-1,1[$ and $\supp(\rho)=[-1,1]$. One easily sees that $$\blup(G\ltimes M)_x=\begin{cases}\{0\}&\text{if}\quad x\in]-1,1[\\\{\R\}&\text{if}\quad x\in ]-\infty,-1[\cup ]1,+\infty[\\\{0,\R\}&\text{if}\quad x\in\{-1,1\}.\end{cases}$$ It follows that $\blup(G\ltimes M)$ is equal to $]-\infty,-1]\sqcup [-1,1]\sqcup [1,+\infty[ $ with the three intervals disjoint by doubling the endpoints $-1$ and $1$.
\end{ex}
\subsection{Longitudinal smoothness}
If $V\subseteq \mathfrak{g}$ is a Lie subalgebra, then we denote by $\exp(V)$ the connected immersed Lie subgroup of $G$ whose Lie algebra is $V$. 
\begin{theorem}\label{closed} Let  $x_n\in M$, $V\in \GR(\mathfrak{g})$ such that $x_n\to x$ and $\mathfrak{h}_{x_n}\to V$ in $\GR(\mathfrak{g})$. Then $\exp(V)$ is a closed Lie subgroup of $G$.
\end{theorem}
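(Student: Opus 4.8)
The plan is to exhibit a closed subgroup of $G$ whose Lie algebra is $V$; since the identity component of a closed subgroup is closed and $\exp(V)$ is the connected subgroup with Lie algebra $V$ ($V$ being a subalgebra contained in $\mathfrak{h}_x$, by the same reasoning as in Proposition~\ref{prop:gamma prop}), this suffices. The guiding principle, visible in the $SL_n$ example where $V=\mathfrak{h}_y$ for the limiting direction $y=\lim x_n/\lvert x_n\rvert$, is that $V$ should be realised as an isotropy algebra of a linearised action: stabilisers of points under a smooth action are automatically closed, being preimages of points under an orbit map, so their identity components are closed.

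First I would normalise the approach. Acting by a sequence $g_n\to e$ that straightens, to first order, the orbit-tangential part of the displacement of $x_n$ from $x$ --- using that $g\mapsto g\cdot x_n$ is a submersion onto the orbit --- and invoking $\Ad(g_n)\mathfrak{h}_{x_n}\to V$ since $g_n\to e$, I may assume $x_n\to x$ transversally to the orbit $G\cdot x$. Next I would linearise. In a chart centred at $x$ I expand $\natural_\xi(Y)=\sigma(Y)\,\xi+O(\lvert\xi\rvert^2)$ for $Y\in\mathfrak{h}_x$, where $\sigma:\mathfrak{h}_x\to\mathfrak{gl}(T_xM)$, $\sigma(Y)=d_x\natural(Y)$, is the isotropy representation; the bracket relation \eqref{eqn:bracket} makes $\sigma$ (up to sign) a Lie-algebra homomorphism precisely because each $\natural(Y)$ vanishes at the common zero $x$. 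Passing to the normal representation $\bar\sigma:\mathfrak{h}_x\to\mathfrak{gl}(N_x)$ on $N_x=T_xM/T_x(G\cdot x)$ and writing $\bar v$ for the class of the limiting direction $v=\lim\xi_n/\lvert\xi_n\rvert$, a leading-order analysis of the equations $\natural_{\xi_n}(Y_n)=0$ with $Y_n\to Y\in V$ forces $\bar\sigma(Y)\bar v=0$. Thus $V\subseteq\mathfrak{s}_{\bar v}:=\{Y\in\mathfrak{h}_x:\bar\sigma(Y)\bar v=0\}$, and $\mathfrak{s}_{\bar v}$ is the Lie algebra of the stabiliser of $\bar v$ in the linear action of $G_x$ on $N_x$, so $\exp(\mathfrak{s}_{\bar v})$ is closed.

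It remains to upgrade the inclusion $V\subseteq\mathfrak{s}_{\bar v}$ to an equality, and this is the main obstacle. The danger is the familiar ``irrational winding'' pathology --- rational lines of slope $p_n/q_n\to$ irrational in a torus are Lie algebras of closed circles whose limit is not --- so equality cannot be formal and must use that the $\mathfrak{h}_{x_n}$ are isotropy algebras of a single action at points converging to $x$. When $\dim\mathfrak{s}_{\bar v}=\dim V=\lim\dim\mathfrak{h}_{x_n}$, equality is automatic and $V=\mathfrak{s}_{\bar v}$ is a stabiliser algebra; the remaining case is that $\bar v$ is itself degenerate for $\bar\sigma$ (higher-order approach), which I would resolve by iterating the construction on the blow-up of $M$ at $x$, onto which the $G_x$-action lifts. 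The termination of this recursion, and with it the exact dimension count, is where the linear (hence real-algebraic) nature of $\bar\sigma$ is essential: the admissible isotropy subalgebras form a constrained, ``rational'' family --- finitely many isotropy types when the relevant isotropy is compact, and a Noetherian determinantal stratification in general --- so that $\mathfrak{h}_{x_n}$ is eventually constant along the resolved approach and its limit $V$ is genuinely the Lie algebra of a closed stabiliser. I expect this dimension and finiteness control, rather than any single computation, to be the crux of the proof.
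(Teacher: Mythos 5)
Your proposal establishes (essentially correctly, after the normalisation and the leading-order expansion) only the inclusion $V\subseteq\mathfrak{s}_{\bar v}$, and the entire weight of the argument rests on the unproven upgrade to realising $V$ exactly as the isotropy algebra of a closed stabiliser, via a blow-up recursion whose termination you justify by appeal to real-algebraic finiteness (``finitely many isotropy types\dots a Noetherian determinantal stratification''). This is the genuine gap, and it is not repairable along these lines: the action is merely $C^\infty$, not real-analytic or algebraic, so no Noetherian stratification of isotropy data is available, and the isotropy algebras $\mathfrak{h}_{x_n}$ are stabilisers of the \emph{nonlinear} action, not of any finite-order jet, so no finite tower of linearisations controls them. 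The paper's own one-dimensional example makes the failure concrete: for $G=\R$ acting by the flow of $\rho(x)\partial_x$ with $\supp(\rho)=[-1,1]$, the vector field is flat at $x=1$, so at $x=1$ the isotropy representation $\bar\sigma$ is identically zero, as is every higher-order linearisation on every iterated blow-up; approaching from inside one has $\mathfrak{h}_{x_n}=0$, hence $V=0\subsetneq\mathfrak{s}_{\bar v}=\mathfrak{h}_x=\R$, and your recursion never closes the gap between $V$ and a stabiliser algebra ($\exp(V)$ happens to be trivially closed here, but the mechanism you propose for proving closedness never fires). Likewise your claim that $\mathfrak{h}_{x_n}$ is ``eventually constant along the resolved approach'' has no justification: the algebras can rotate (e.g.\ as $\Ad(g_n)$-conjugates with $g_n$ unbounded) without any dimension drop for the stratification to detect.

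The paper's proof uses a completely different and much more elementary tool, which is precisely what rules out the irrational-winding pathology you correctly identify: the periodic bounding lemma (Lemma \ref{lem:periodic}), asserting a uniform lower bound on nontrivial periods of a flow over a compact set. Its parametrised form (Lemma \ref{lemma 2}, applied to the vector field $(y,Y)\mapsto(\natural_y(Y),0)$ on $M\times\mathfrak{g}$) says that on a compact set, $\exp(Y)\cdot y=y$ forces either $Y\in\mathfrak{h}_y$ or $\norm{Y}\geq\eta$. Then Lemma \ref{lemama 123} shows that any $X\in\mathfrak{g}$ with $\norm{X}$ small and $\exp(X)\in\exp(V)$ satisfies $X\in V$: one approximates $\exp(X)$ by $g_n\in\exp(\mathfrak{h}_{x_n})$, notes $g_n\cdot x_n=x_n$, and concludes $\log(g_n)\in\mathfrak{h}_{x_n}$ from the uniform period bound, whence $X=\lim\log(g_n)\in V$. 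This exhibits $\exp(V)$ as locally closed at the identity, hence closed. Note that this argument is insensitive to flatness and to tangential approach of the $x_n$ --- exactly the two regimes where your linearisation-plus-stratification scheme breaks down --- so the compactness/period-bound input is not an optional refinement but the substitute for the finiteness your proposal assumes.
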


\begin{rem}\begin{itemize}
\item Since $G/\exp(\mathfrak{h}_x)$ is a cover of $G\cdot x$, one can think of $G/\exp(V)$ as an orbit at $x$ with tangent space $\mathfrak{g}/V$. Thus Theorem \ref{closed} gives smoothness of these auxiliary orbits. 
\item The space $\blup(G\ltimes M)$ is defined by taking the closure of $\mathfrak{h}_x$ for $x\in M_{\reg}$, because we think it is more natural to only take the closure of regular points. Theorem \ref{closed} is true without supposing that $x_n\in M_{\reg}$.
\end{itemize}
\end{rem}
Theorem \ref{closed} is a corollary of the periodic bounding lemma. \begin{lem}[Periodic bounding lemma \cite{period1,period2}]\label{lem:periodic}Let $M$ be a smooth manifold, $X$ a vector field, $K\subseteq M$ a compact subset, then there exists $\epsilon>0$ such that for any $x\in K$ either $X(x)=0$ or $\exp^t_X(x)\neq x$ for all $0<|t|<\epsilon$.
\end{lem}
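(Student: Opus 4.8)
The plan is to prove that $\exp(V)$ equals its closure. Write $\overline{\exp(V)}$ for the closure in $G$; it is a subgroup, and being closed it is by Cartan's theorem a closed connected Lie subgroup, with some Lie algebra $\mathfrak{k}\supseteq V$ (recall $V$ is a subalgebra by Proposition \ref{prop:gamma prop}.3). If $\mathfrak{k}=V$, then $\exp(V)$ and $\overline{\exp(V)}$ are connected subgroups of the same dimension, so $\exp(V)$ contains a neighbourhood of the identity in $\overline{\exp(V)}$, hence is open and therefore closed in it; thus $\exp(V)=\overline{\exp(V)}$ and we are done. So it suffices to rule out $\mathfrak{k}\supsetneq V$, which I would do by contradiction, using only Lemma \ref{lem:periodic}. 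Assuming $\mathfrak{k}\supsetneq V$, fix an inner product on $\mathfrak{g}$ and choose a unit vector $w\in\mathfrak{k}$ orthogonal to $V$; then $\R w\subseteq\mathfrak{k}$, so $\exp_G(tw)\in\overline{\exp(V)}$ for all $t$ (here $\exp_G$ denotes the group exponential, whose one-parameter subgroups act on $M$ by the flows $\exp^t_{\natural(\,\cdot\,)}$).

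The key preliminary step is an approximation lemma: for every $g\in\overline{\exp(V)}$ and every $\eta>0$ there exist $n$ and an element $\gamma\in G_{x_n}$ of the stabiliser with $d(\gamma,g)<\eta$. Indeed, since $\mathfrak{h}_{x_n}\to V$ in $\GR(\mathfrak{g})$ (which forces $\dim\mathfrak{h}_{x_n}=\dim V$ eventually), every $\xi\in V$ is a limit of vectors $\xi_n\in\mathfrak{h}_{x_n}$; writing a given $h\in\exp(V)$ as a finite product $\exp_G(\xi^1)\cdots\exp_G(\xi^r)$ with $\xi^i\in V$ and replacing each $\xi^i$ by $\xi^i_n$ produces $h_n:=\exp_G(\xi^1_n)\cdots\exp_G(\xi^r_n)\in (G_{x_n})_0\subseteq G_{x_n}$ with $h_n\to h$ as $n\to\infty$. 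Combined with the density of $\exp(V)$ in $\overline{\exp(V)}$ and a diagonal choice of $n$, this gives the claim.

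Now I would manufacture the returning elements. For each $m$, using the approximation lemma twice (density, then diagonal in $n$) choose $n_m\to\infty$ and $\gamma_m\in G_{x_{n_m}}$ with $d\big(\gamma_m,\exp_G(w/m)\big)<1/m^{3}$. Writing $\gamma_m=\exp_G(\tilde w_m)$ for $m$ large (so $\gamma_m$ lies near the identity where $\exp_G$ is a diffeomorphism), one gets $\tilde w_m=w/m+O(1/m^{3})$, so $\tilde w_m\to0$ while the normalised directions $u_m:=\tilde w_m/|\tilde w_m|$ converge to $w$. Because $w\perp V$ and $\mathfrak{h}_{x_{n_m}}\to V$, the angle between $u_m$ and $\mathfrak{h}_{x_{n_m}}$ stays bounded below, so for large $m$ we have $u_m\notin\mathfrak{h}_{x_{n_m}}$, i.e. $\natural(u_m)(x_{n_m})\neq0$; meanwhile $\gamma_m\in G_{x_{n_m}}$ says $\exp^{t}_{\natural(u_m)}(x_{n_m})=x_{n_m}$ at $t=|\tilde w_m|\to0$. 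To derive the contradiction I would apply Lemma \ref{lem:periodic} in its direction-uniform form: run it on $\Sigma\times M$, where $\Sigma$ is the unit sphere of $\mathfrak{g}$, for the vector field $\tilde X(u,y)=(0,\natural(u)(y))$ and the compact set $\overline{B}\times K$, with $\overline{B}$ a closed neighbourhood of $w$ in $\Sigma$ and $K$ a compact neighbourhood of $x$ containing all $x_{n_m}$. Since $\tilde X(u,y)=0$ iff $u\in\mathfrak{h}_y$ and $\exp^{t}_{\tilde X}(u,y)=(u,\exp^{t}_{\natural(u)}(y))$, the lemma produces $\epsilon>0$ such that $\exp^{t}_{\natural(u)}(y)=y$ with $0<t<\epsilon$ and $(u,y)\in\overline{B}\times K$ forces $u\in\mathfrak{h}_y$. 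Taking $(u,y)=(u_m,x_{n_m})$ and $t=|\tilde w_m|<\epsilon$ for large $m$ then gives $u_m\in\mathfrak{h}_{x_{n_m}}$, contradicting the transversality above; hence $\mathfrak{k}=V$.

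I expect the main obstacle to be the second half of the third paragraph: the returning stabiliser elements $\gamma_m$ must approximate $\exp_G(w/m)$ with a relative error genuinely smaller than their distance $\sim1/m$ to the identity, for otherwise the limiting direction $u_m$ is uncontrolled and could collapse back into $\mathfrak{h}_{x_{n_m}}$, killing the contradiction. This is precisely what forces the two-stage approximation and the $1/m^{3}$ bookkeeping, and it is the only place where the hypotheses $x_n\to x$ and $\mathfrak{h}_{x_n}\to V$ are used in an essential, quantitative way. A secondary point needing care is the passage to the parametrised version of Lemma \ref{lem:periodic} on $\Sigma\times M$, which is what upgrades the single–vector-field statement to the uniform-in-direction bound actually invoked.
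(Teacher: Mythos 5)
You were asked to prove Lemma \ref{lem:periodic} itself, and your proposal never does so: it explicitly \emph{invokes} it (``using only Lemma \ref{lem:periodic}'', and later ``Lemma \ref{lem:periodic} in its direction-uniform form'' on $\Sigma\times M$, which is exactly the paper's parametrised version, Lemma \ref{lemma 2}). As an argument for the statement at hand this is circular, and nothing in your text touches the lemma's actual content, namely a uniform lower bound on periods of non-stationary periodic orbits over a compact set. Note that the paper does not reprove this lemma either; it is quoted from \cite{period1,period2}. A self-contained proof would run along genuinely different lines: assume points $x_k\in K$ with $X(x_k)\neq 0$ and returns $\exp^{t_k}_X(x_k)=x_k$ at times $t_k\to 0$, pass to a limit $x_*$; if $X(x_*)\neq 0$ a flow-box chart at $x_*$ shows no nearby point returns in small time, while if $X(x_*)=0$ a Gronwall/Yorke-type estimate gives $T\geq c/L$ for any nonconstant period $T$, with $L$ a local Lipschitz constant of $X$; compactness of $K$ then yields a uniform $\epsilon$. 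None of this appears in your write-up.

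What you actually prove, \emph{assuming} the lemma, is Theorem \ref{closed}, and read as such your argument appears sound and takes a route different from the paper's: the paper proves Lemma \ref{lemama 123} (if $\norm{X}$ is small and $\exp(X)\in\exp(V)$ then $X\in V$), which shows $\exp(V)$ is locally closed near the identity and hence closed, whereas you compute the Lie algebra $\mathfrak{k}$ of $\overline{\exp(V)}$ and rule out $\mathfrak{k}\supsetneq V$ by blowing down along $\exp_G(w/m)$. The paper's route is quantitatively lighter: Lemma \ref{lemma 2} is applied at a fixed scale to a fixed small element $\exp(X)$, so the approximation $g_n\to\exp(X)$ by stabiliser elements needs no rate, while your approximants tend to the identity, which is what forces your $1/m^3$ bookkeeping. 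Both routes consume the same input (the parametrised periodic bounding lemma); neither establishes Lemma \ref{lem:periodic}, so as a proof of the assigned statement the proposal fails at its first step.
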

For the rest of this subsection, we equip $\mathfrak{g}$ with a Euclidean norm. The following lemma appears in \cite[Proposition 1.1]{Debord2013}.
 \begin{lem}[Parametrised periodic bounding lemma]\label{lemma 2}Let $K\subseteq M$ be a compact subset. Then there exists $\eta>0$ such that for every $y\in K$, $Y\in \mathfrak{g}$ if $\exp(Y)\cdot y=y$, then either $Y\in \mathfrak{h}_y$ or $\norm{Y}\geq \eta$.
  \end{lem}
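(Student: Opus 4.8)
The plan is to deduce this from the (unparametrised) periodic bounding Lemma \ref{lem:periodic}, applied to a single auxiliary vector field on the product of $M$ with the unit sphere of $\mathfrak{g}$. The key observation is that one should absorb the ``time'' into the norm of $Y$: writing $Y=\norm{Y}\,Z$ with $Z$ a unit vector, the one-parameter subgroup generated by $Z$ acts on $M$ through the flow of the fundamental vector field $\natural(Z)$, so that
\[
\exp(Y)\cdot y=\exp^{\,\norm{Y}}_{\natural(Z)}(y),\qquad Y\in\mathfrak{h}_y\iff \natural(Z)(y)=0
\]
(the latter for $Y\neq 0$). Thus the hypothesis $\exp(Y)\cdot y=y$ says exactly that $y$ is a periodic point of period $\norm{Y}$ of the flow of $\natural(Z)$, while the conclusion $Y\in\mathfrak{h}_y$ says that $\natural(Z)$ vanishes at $y$. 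The statement to be proved is therefore a uniform-in-$Z$ lower bound on the periods of the flows $\exp_{\natural(Z)}^{\bullet}$, which is precisely what Lemma \ref{lem:periodic} delivers, once the family $\{\natural(Z)\}_{Z}$ is packaged into one vector field.

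First I would form the smooth manifold $\widetilde{M}=M\times S$, where $S\subseteq\mathfrak{g}$ denotes the unit sphere for the chosen Euclidean norm, and define the vector field $\widetilde{X}(y,Z)=\big(\natural_y(Z),\,0\big)$. Since $\natural$ is linear and the action is smooth, the map $(y,Z)\mapsto\natural_y(Z)$ is smooth, so $\widetilde{X}$ is a smooth vector field on $\widetilde{M}$; its flow is $\exp^t_{\widetilde X}(y,Z)=\big(\exp^t_{\natural(Z)}(y),Z\big)$ and is complete because the $G$-action is global. In particular $\exp^t_{\widetilde X}(y,Z)=(y,Z)$ if and only if $\exp^t_{\natural(Z)}(y)=y$, and $\widetilde{X}(y,Z)=0$ if and only if $\natural(Z)(y)=0$.

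Next I would apply Lemma \ref{lem:periodic} to $\widetilde X$ on $\widetilde M$ with the compact set $\widetilde K=K\times S$. This produces $\epsilon>0$ such that for every $(y,Z)\in\widetilde K$, either $\widetilde X(y,Z)=0$ or $\exp^t_{\widetilde X}(y,Z)\neq(y,Z)$ for all $0<|t|<\epsilon$. Translating through the equivalences above: for every $y\in K$ and every unit $Z\in\mathfrak{g}$, either $\natural(Z)(y)=0$ or $\exp^t_{\natural(Z)}(y)\neq y$ for $0<|t|<\epsilon$. I then set $\eta=\epsilon$. If $Y=0$ then $Y\in\mathfrak{h}_y$ trivially; otherwise write $Y=\norm{Y}Z$ with $Z\in S$, so that $\exp(Y)\cdot y=y$ becomes $\exp^{\norm{Y}}_{\natural(Z)}(y)=y$. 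If moreover $\natural(Z)(y)\neq 0$, the dichotomy forces $\norm{Y}\geq\epsilon=\eta$; and if $\natural(Z)(y)=0$ then $\natural_y(Y)=\norm{Y}\,\natural_y(Z)=0$, i.e.\ $Y\in\mathfrak{h}_y$. This is exactly the claimed alternative.

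The only genuinely delicate points are bookkeeping rather than conceptual: one must check that $\widetilde X$ is smooth on the product (clear from smoothness of the action and linearity of $\natural$) and that the conventions relating the one-parameter subgroup $s\mapsto\exp(sZ)$ to the flow of $\natural(Z)$ are correctly stated. Since $Z$ ranges over the whole sphere $S$ (hence over $\pm Z$), the sign convention in the identification $\exp(sZ)\cdot y=\exp^{\pm s}_{\natural(Z)}(y)$ is immaterial for the conclusion. I expect the main work to be simply recognising the reparametrisation $Y=\norm{Y}Z$, which converts the fixed time $t=1$ in $\exp(Y)=\exp^1_{\natural(Y)}$ into the arbitrarily small periods $\norm{Y}<\eta$ that the periodic bounding lemma controls.
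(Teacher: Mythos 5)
Your proof is correct and is essentially the paper's own argument: the paper applies Lemma \ref{lem:periodic} to the same auxiliary vector field $(y,Y)\mapsto(\natural_y(Y),0)$, there defined on $M\times\mathfrak{g}$ with compact set $K\times B$ ($B$ the unit ball), and then performs exactly your rescaling $Y=\norm{Y}\,Z$ to convert the fixed time $t=1$ into a small period. The only cosmetic difference is your restriction to the unit sphere $S$ instead of the ball $B$.
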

  \begin{proof}Let $X$ be the vector field on $M\times \mathfrak{g}$ which at $(y,Y)$ is equal to $(\natural_y(Y),0)$. The flow of $X$ at $(y,Y)$ is equal to $(\exp(tY)\cdot y,Y)$. Let $\epsilon$ be obtained from Lemma \ref{lem:periodic} applied to $X$ and $K\times B$, where $B\subseteq \mathfrak{g}$ is the unit ball. If $Y\in \mathfrak{g}$ such that $Y\notin \mathfrak{h}_y$ and $\exp(Y)\cdot y=y$, then by Lemma \ref{lem:periodic}, $\exp_X^t(y,\frac{Y}{\norm{Y}})\neq (y,\frac{Y}{\norm{Y}})$ for all $0<|t|<\epsilon$. In other words $\exp(t Y)\cdot y\neq y$ for all $0<|t|<\frac{\epsilon}{\norm{Y}}$. Since $\exp(Y)\cdot y=y$, it follows that $\norm{Y}\geq \epsilon$.
  \end{proof}
Theorem \ref{closed}  easily follows from the following lemma.
 \begin{lem}\label{lemama 123}There exists $\eta''$ such that if $X\in \mathfrak{g}$ with $\norm{X}<\eta''$ and $\exp(X)\in \exp(V)$, then $X\in V$.
 \end{lem}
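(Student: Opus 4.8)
The plan is to realise every element of the immersed subgroup $\exp(V)$ as a limit of elements coming from the genuine stabilisers at the points $x_n$, and then feed these into the parametrised periodic bounding lemma (Lemma \ref{lemma 2}). First I would record the preliminary reductions: since $\mathfrak{h}_{x_n}\to V$ in the disjoint union $\GR(\mathfrak{g})$, after discarding finitely many terms all the $\mathfrak{h}_{x_n}$ lie in the same component as $V$, so $\dim\mathfrak{h}_{x_n}=\dim V=:k$. I would fix a compact set $K\subseteq M$ containing $x$ and all $x_n$, let $\eta>0$ be the constant furnished by Lemma \ref{lemma 2} for this $K$, choose $\rho>0$ so that $\exp$ restricts to a diffeomorphism of the ball $\{\norm{Z}<\rho\}$ onto an open neighbourhood of the identity, and set $\eta''=\min(\eta,\rho)$. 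Crucially $\eta''$ depends only on $V$ and $K$, not on the individual $X$.

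The heart of the argument is the approximation claim: for every $g\in\exp(V)$ there exist $g_n\in\exp(\mathfrak{h}_{x_n})$ with $g_n\to g$. Since $\exp(V)$ is a connected Lie subgroup it is generated by the exponentials $\exp(Z)$, $Z\in V$, so I would write $g=\exp(Z_1)\cdots\exp(Z_r)$ with $Z_i\in V$. Identifying $\GR_k(\mathfrak{g})$ with the rank-$k$ orthogonal projections, the convergence $\mathfrak{h}_{x_n}\to V$ means the projections converge, whence $Z_i^{(n)}:=\proj_{\mathfrak{h}_{x_n}}(Z_i)\to\proj_V(Z_i)=Z_i$; setting $g_n:=\exp(Z_1^{(n)})\cdots\exp(Z_r^{(n)})\in\exp(\mathfrak{h}_{x_n})$ then gives $g_n\to g$ by continuity. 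The point of lifting into $\exp(\mathfrak{h}_{x_n})$ is that each $g_n$ fixes $x_n$: every $Z\in\mathfrak{h}_{x_n}$ has $\natural_{x_n}(Z)=0$, so the constant curve solves the flow equation and $\exp(tZ)\cdot x_n=x_n$, and products of such elements again fix $x_n$.

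With this in hand I would take $g=\exp(X)$ for a given $X$ with $\exp(X)\in\exp(V)$ and $\norm{X}<\eta''$, and run the following limiting argument. Since $\norm{X}<\rho$ we have $\exp(X)\in\exp(\{\norm{Z}<\rho\})$, an open set, so for large $n$ the approximants $g_n$ lie in it and the fixed local inverse gives $X_n:=\log(g_n)\to X$; in particular $\norm{X_n}\to\norm{X}<\eta$, so eventually $\norm{X_n}<\eta$. Now $g_n=\exp(X_n)$ satisfies $\exp(X_n)\cdot x_n=x_n$ with $\norm{X_n}<\eta$, so Lemma \ref{lemma 2} forces $X_n\in\mathfrak{h}_{x_n}$. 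Finally, since $X_n\in\mathfrak{h}_{x_n}$, $X_n\to X$, and $\mathfrak{h}_{x_n}\to V$ with constant dimension $k$, the limit lies in $V$ (convergence of projections gives $d(X_n,V)\to 0$, hence $d(X,V)=0$ and $X\in V$ as $V$ is closed), which is exactly the conclusion. The main obstacle is the approximation claim bridging the ``global'' immersed subgroup $\exp(V)$ and the stabilisers $\exp(\mathfrak{h}_{x_n})$: one must be sure that a single element of $\exp(V)$, a priori only an uncontrolled finite product of exponentials, can be approximated by stabiliser elements, and that after passing to logarithms the norms shrink enough to trigger Lemma \ref{lemma 2}. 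It is precisely here that the \emph{uniformity} of $\eta$ over all of $K$, rather than information at $x$ alone, is indispensable: evaluating only at $x$ would give merely $X\in\mathfrak{h}_x$, which contains $V$ but may be strictly larger.
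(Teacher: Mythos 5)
Your proof is correct and takes essentially the same route as the paper: both arguments approximate $\exp(X)$ by stabiliser elements $g_n\in\exp(\mathfrak{h}_{x_n})$ (using that $\exp(V)$ is generated by exponentials of elements of $V$, each approximable from $\mathfrak{h}_{x_n}$), take logarithms in a fixed exponential chart with $\eta''=\min(\eta,\eta')$, invoke Lemma \ref{lemma 2} to force $\log(g_n)\in\mathfrak{h}_{x_n}$, and pass to the limit to get $X\in V$. Your additional details (orthogonal projections realising the Grassmannian convergence, the eventual equality of dimensions, and the verification that elements of $\exp(\mathfrak{h}_{x_n})$ fix $x_n$) simply make explicit steps the paper leaves implicit.
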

\begin{proof}Let $\eta$ be obtained from Lemma \ref{lemma 2} applied to $K=\{x_n:n\in \N\}\sqcup \{x\}$. Let $\eta'$ be such that $\exp:\mathfrak{g}\to G$ is an embedding when restricted to $B(0,\eta')$ the ball around $0$ of radius $\eta'$. We denote by $\log:\exp(B(0,\eta'))\to \mathfrak{g}$ the inverse of $\exp$. Let $X\in \mathfrak{g}$ be such that $\norm{X}<\min(\eta,\eta')$ and $\exp(X)\in \exp(V)$. Then there exists $X_1,\cdots,X_k\in V$ such that $\exp(X)=\exp(X_1)\cdots \exp(X_k)$. By approximating each $X_i$ with elements in $\mathfrak{h}_{x_n}$, we construct a sequence $g_n\in \exp(\mathfrak{h}_{x_n})$ such that $g_n\to \exp(X)$. Since $\exp(X)\in \exp(B(0,\eta'))$. It follows that for $n$ big enough $g_n\in \exp(B(0,\eta'))$. Hence $\log(g_n)$ is well defined and $\log(g_n)\to X$. Since $g_n\in \exp(\mathfrak{h}_{x_n})$, it follows that $g_n\cdot x_n=x_n$. By Lemma \ref{lemma 2}, $\log(g_n)\in \mathfrak{h}_{x_n}$ for $n$ big enough. Therefore $X\in V$.
\end{proof}
In the proof of Lemma \ref{lemma 2} by taking $K$ a compact neighbourhood of $x$, one obtains\begin{theorem}\label{closed2}Let $(V,x)\in \blup(G\ltimes M)$, $S\subseteq \mathfrak{g}$ a subspace transverse to $V$. Then there exists an open neighbourhood $S'$ of $0\in S$, and an open neighbourhood $L$ of $(V,x)\in \blup(G\ltimes M)$ such that for any $(W,y)\in L$, the map $$\exp:S'\to G/\exp(W),\quad X\to \exp(X)\exp(W)$$ is an embedding.
\end{theorem}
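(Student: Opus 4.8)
The plan is to obtain Theorem~\ref{closed2} as a uniform, parametrised version of Lemmas~\ref{lemma 2} and~\ref{lemama 123}, following the hint that one should apply the periodic bounding lemma with $K$ a compact neighbourhood of $x$ rather than a discrete set. Throughout I interpret ``$S$ transverse to $V$'' as $\mathfrak{g}=S\oplus V$. Two preliminary reductions are immediate. First, since $\GR(\mathfrak{g})$ carries the disjoint union topology, every $(W,y)$ close to $(V,x)$ has $\dim W=\dim V=:k$ and $W\to V$ in $\GR_k(\mathfrak{g})$; as $S\cap V=0$ is an open condition, $\mathfrak{g}=S\oplus W$ for all such $W$. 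Second, by Theorem~\ref{closed} the subgroup $\exp(W)$ is closed for every $(W,y)\in\blup(G\ltimes M)$, so each quotient $G/\exp(W)$ is a Hausdorff manifold with $T_{e\exp(W)}\big(G/\exp(W)\big)=\mathfrak{g}/W$.

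First I would establish a uniform local diffeomorphism. Consider $\psi_W\colon S\times W\to G$, $\psi_W(X,Z)=\exp(X)\exp(Z)$; its differential at $(0,0)$ is $(X,Z)\mapsto X+Z$, an isomorphism since $\mathfrak{g}=S\oplus W$. Because the family $W\mapsto\psi_W$ depends smoothly on $W$ in a Grassmannian chart around $V$ and the differentials at the origin are uniformly invertible over a compact neighbourhood of $V$, the uniform inverse function theorem provides neighbourhoods $S'$ of $0\in S$ and $W'$ of $0\in W$, and a neighbourhood $\mathcal K$ of $V$ in $\GR_k(\mathfrak{g})$, on which $\psi_W$ is a diffeomorphism onto an open subset of $G$, uniformly for $W\in\mathcal K$. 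Composing with the projection $\pi_W\colon G\to G/\exp(W)$ and differentiating along $S$ at points $(X,0)$ shows at once that $f_W\colon S'\to G/\exp(W)$, $f_W(X)=\exp(X)\exp(W)$, is an immersion for every $W\in\mathcal K$: the image of $\{0\}\times W$ is the tangent to the coset through $\exp(X)$, the image of $S\times\{0\}$ is a complement to it, and the projection kills the former while injecting the latter.

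The heart of the argument is a parametrised form of Lemma~\ref{lemama 123}: there exist $\eta''>0$ and a neighbourhood $L$ of $(V,x)$ in $\blup(G\ltimes M)$ such that whenever $(W,y)\in L$, $X\in\mathfrak{g}$, $\norm{X}<\eta''$ and $\exp(X)\in\exp(W)$, then $X\in W$. I would prove this exactly as Lemma~\ref{lemama 123}, but feeding in the uniform constant $\eta$ obtained by applying Lemma~\ref{lemma 2} to a fixed compact neighbourhood $K$ of $x$ (valid for all $y\in K$ simultaneously), together with the embedding radius $\eta'$ of $\exp$, which is independent of $W$. Indeed, fix $(W,y)\in L$ with $y$ in the interior of $K$; since $(W,y)\in\overline{i(M_{\reg})}$, choose regular $y_n\to y$ with $\mathfrak{h}_{y_n}\to W$ and $y_n\in K$ for large $n$. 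Writing $\exp(X)=\exp(X_1)\cdots\exp(X_m)$ with $X_i\in W$ and approximating each $X_i$ by elements of $\mathfrak{h}_{y_n}$ yields $g_n\in\exp(\mathfrak{h}_{y_n})$ with $g_n\to\exp(X)$; then $\log g_n\to X$ lies in $B(0,\eta')$ with $\norm{\log g_n}<\eta$ for large $n$, and since $g_n\cdot y_n=y_n$ Lemma~\ref{lemma 2} forces $\log g_n\in\mathfrak{h}_{y_n}$, whence $X=\lim\log g_n\in W$. The only difference from Lemma~\ref{lemama 123} is that $\eta$ now bounds all $y\in K$ at once, so $\eta''$ does not depend on the particular $(W,y)\in L$; this uniformity is the main point, and I expect it to be the principal technical obstacle.

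Finally I would assemble these pieces. Shrinking $S'$ so that $\norm{X}<\eta''/2$ for $X\in S'$ and $\log\!\big(\exp(X_2)^{-1}\exp(X_1)\big)$ lies in $W'\cap B(0,\eta'')$ whenever $X_1,X_2\in S'$, injectivity of $f_W$ follows: if $\exp(X_1)\exp(W)=\exp(X_2)\exp(W)$ then $g:=\exp(X_2)^{-1}\exp(X_1)\in\exp(W)$ is small, so $g=\exp(Z)$ with $\norm{Z}<\eta''$, whence $Z\in W$ by the parametrised lemma; then $\psi_W(X_1,0)=\psi_W(X_2,Z)$ and injectivity of $\psi_W$ give $X_1=X_2$. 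Thus $f_W$ is an injective immersion. Taking $S'$ relatively compact and noting that the same bounds hold on $\overline{S'}$, the map $f_W|_{\overline{S'}}$ is a continuous injection from a compact space into the Hausdorff manifold $G/\exp(W)$, hence a homeomorphism onto its image; therefore $f_W$ is an embedding for every $(W,y)\in L$, as required.
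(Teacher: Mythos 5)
Your proof is correct and takes essentially the same route as the paper: the paper's entire proof of Theorem \ref{closed2} is the remark that one reruns Lemma \ref{lemma 2} (and hence the argument of Lemma \ref{lemama 123}) with $K$ a compact neighbourhood of $x$, which is exactly your parametrised lemma with the uniform constant $\eta''$. The remaining ingredients you spell out --- closedness of $\exp(W)$ via Theorem \ref{closed}, the uniform inverse function theorem for $\psi_W(X,Z)=\exp(X)\exp(Z)$, and the compactness argument upgrading the injective immersion to an embedding --- are the routine details the paper leaves implicit.
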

\subsection{Blow-up groupoid}\label{sec:blup groups action}
In this section we define a groupoid $\hblup(G\ltimes M)\rightrightarrows \blup(G\ltimes M)$. The set $\hblup(G\ltimes M)$ denotes the set of all left cosets of $\exp(V)$ for $(V,x)\in \blup(G\ltimes M)$. By Proposition \ref{prop:gamma prop}.d, the set of left cosets agree with the set of right cosets. Hence we define \begin{align*}
\hblup(G\ltimes M)=\{(g\exp(V),x):(V,x)\in \blup(G\ltimes M)\}
\end{align*}
The groupoid structure is given by 
\begin{align*}
&s(g\exp(V),x)=(V,x),\quad r(g\exp(V),x)=(Ad(g)V,gx)\\
&(g \exp(Ad(h)V),hx)\cdot (h\exp(V),x)=(gh\exp(V),x)\\
&(g\exp(V),x)^{-1}=(g^{-1}\exp(Ad(g)V),gx)
\end{align*}
The the product is well defined (independent of the choice of $g,h$ in their respective cosets) because $gh\exp(V)$ is equal to the product of  $g \exp(Ad(h)V)$ and $h\exp(V)$ as subsets of $G$. Same for the inverse.

The topology is defined as follows. One has a natural map $$\pi:G\times \blup(G\ltimes M)\to \hblup(G\ltimes M),\quad (g,(V,x))\to (g\exp(V),x).$$The topology on $\hblup(G\ltimes M)$ is the quotient topology. In other words, a sequence (or a net) $(g_n\exp(V_n),x_n)$ converges to $(g\exp(V),x)$ if \begin{itemize}
\item $x_n\to x$ and $V_n\to V\in \GR(\mathfrak{g})$
\item there exists $y_n\in \exp(V_n)$, $y\in \exp(V)$ such that $g_ny_n\to gy$.
\end{itemize}
This topology is metrizable. Let $d_M,d_G,d_{\mathfrak{g}}$ a metric on $M$, $G$, $\GR(\mathfrak{g})$ respectively. Then we define $$d((g\exp(V),x),(h\exp(W),y))=d_M(x,y)+d_{\mathfrak{g}}(V,W)+d_G(g\exp(V),h\exp(W)).$$
The following theorem is then straightforward to check.
\begin{theorem}
The groupoid $\hblup(G\ltimes M)$ is a metrizable locally compact continuous family groupoid in the sense \cite{ContFamilyGroupoids}.
\end{theorem}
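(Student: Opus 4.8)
The statement bundles three assertions---metrizability, local compactness, and the continuous family groupoid structure---so the plan is to treat metrizability from the displayed metric, then build the continuous family of manifolds structure on the source map $s\colon\hblup(G\ltimes M)\to\blup(G\ltimes M)$ out of \Cref{closed,closed2}, after which local compactness drops out for free. For the metric, I would read $d_G(g\exp(V),h\exp(W))$ as the Hausdorff distance between the two cosets computed for a \emph{bounded} metric on $G$; the three summands are then manifestly independent of the chosen representatives, so $d$ descends to $\hblup(G\ltimes M)$. Symmetry and the triangle inequality are immediate, and the only metric axiom needing an argument is separation: if $d=0$ then $x=y$, $V=W$, and the two cosets are at Hausdorff distance $0$, hence equal because $\exp(V)$ is a \emph{closed} subgroup by \Cref{closed}. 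To identify the metric topology with the quotient topology I would compare convergence directly: $d\to 0$ forces $x_n\to x$ and $V_n\to V$ from the first two terms, and in the presence of $V_n\to V$ the vanishing of the Hausdorff term is equivalent to the existence of $y_n\in\exp(V_n)$, $y\in\exp(V)$ with $g_ny_n\to gy$, the nontrivial implication resting on the local, neighbourhood-uniform continuity of $W\mapsto\exp(W)$ supplied by \Cref{closed2}. (As a shortcut, once Hausdorffness, second countability and local compactness are established, metrizability also follows from the Urysohn metrization theorem.)

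The heart of the theorem is the continuous family of manifolds structure. The base $\blup(G\ltimes M)$ is locally compact, Hausdorff and second countable, being a closed subset of $\GR(\mathfrak{g})\times M$ (\Cref{intro thm}.1). Over a point $(W,y)$ the source fibre is $s^{-1}(W,y)\cong G/\exp(W)$, which is a genuine smooth manifold precisely because $\exp(W)$ is closed (\Cref{closed}). A key preliminary observation I would record is that the fibre dimension is \emph{locally constant}: since $\GR(\mathfrak{g})$ carries the disjoint union topology, a convergent net of subspaces is eventually of fixed dimension, so $\dim V$, and hence $\dim G/\exp(V)=\dim\mathfrak{g}-\dim V$, is locally constant on $\blup(G\ltimes M)$; this is exactly what permits charts modelled on a single $\R^n$. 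The charts come straight from \Cref{closed2}: given $(V,x)$ and a complement $S$ to $V$, the theorem yields $S'$ and a neighbourhood $L$ of $(V,x)$ such that $X\mapsto(\exp(X)\exp(W),y)$ embeds $S'$ into the fibre over each $(W,y)\in L$, onto a neighbourhood of the base coset. Left-translating by elements $g\in G$ spreads these embeddings to neighbourhoods of arbitrary cosets $g\exp(W)$, and the maps $(X,(W,y))\mapsto(g\exp(X)\exp(W),y)$ assemble into an atlas.

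It then remains to check that the transition maps are $C^{\infty,0}$ (fibrewise smooth with all fibre derivatives jointly continuous in the transverse variable) and that the structure maps are morphisms of continuous families. A transition sends $(X,(W,y))$ to $(X',(W,y))$, where $X'$ solves the coset identity $g_1\exp(X')\exp(W)=g_0\exp(X)\exp(W)$; as this is resolved using only the smooth multiplication, inversion and $\log$ of $G$ together with the fibrewise embeddings of \Cref{closed2}, the map $X\mapsto X'$ is smooth and, with all its $X$-derivatives, depends continuously on $(W,y)$, which is the $C^{\infty,0}$ condition. The same mechanism shows that multiplication $(g\exp(\Ad(h)W),hy)\cdot(h\exp(W),y)=(gh\exp(W),y)$ and inversion are induced in local charts by the smooth operations of $G$, hence are morphisms of continuous families; that $r$ takes values in the base is guaranteed by \Cref{prop:gamma prop}.d, which places $(\Ad(g)W,gy)$ in $\blup(G\ltimes M)$. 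Local compactness is now automatic, each chart being homeomorphic to $L\times S'$ with $L$ locally compact and $S'$ open in $\R^n$. The step I expect to be genuinely delicate is this $C^{\infty,0}$ control in the transverse direction: because one is quotienting by a subgroup $\exp(W)$ that \emph{moves} with the base point, the smooth structure on $G/\exp(W)$ could a priori vary only continuously, and upgrading this to continuity of all fibre derivatives in $(W,y)$ is exactly where the neighbourhood-uniform (rather than merely pointwise) conclusion of \Cref{closed2} is indispensable.
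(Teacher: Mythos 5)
Your proposal is correct and follows exactly the route the paper intends: the paper states this theorem as ``straightforward to check,'' exhibiting only the displayed metric and remarking that the continuous family structure ``is essentially the content of Theorem \ref{closed2},'' and your argument fills in precisely those details --- the Hausdorff-distance reading of $d_G$ with separation via the closedness of $\exp(V)$ from Theorem \ref{closed}, and charts, $C^{\infty,0}$ transitions, and local compactness built from the neighbourhood-uniform embeddings of Theorem \ref{closed2}. Your observations that local constancy of $\dim V$ (from the disjoint-union topology on $\GR(\mathfrak{g})$) permits charts modelled on a fixed $\R^n$, and that the transverse uniformity of Theorem \ref{closed2} is what upgrades pointwise smoothness of the fibres $G/\exp(W)$ to the $C^{\infty,0}$ condition, are exactly the delicate points the paper's one-line proof implicitly relies on.
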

The groupoid $\hblup(G\ltimes M)$ being a continuous family groupoid is essentially the content of Theorem \ref{closed2}. The following proposition is straightforward to prove.\begin{prop} One has a groupoid morphism $G\ltimes \blup(G\ltimes M)\to \hblup(G\ltimes M)$ defined by $(g,V)\mapsto g\exp(V)$. This groupoid morphism is surjective, its kernel is $N=\{(g,V):g\in \exp(V)\}$. The induced map $\left(G\ltimes \blup(G\ltimes M)\right)/N\to  \hblup(G\ltimes M)$ is an isomorphism.
\end{prop}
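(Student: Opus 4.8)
The plan is to define the morphism on objects as the identity of $\blup(G\ltimes M)$ and on arrows by $\Phi(g,(V,x))=(g\exp(V),x)$, and then to verify directly that $\Phi$ is a continuous functor. First I would check that $\Phi$ intertwines the structure maps: the source of $(g,(V,x))$ is $(V,x)$, which maps to the source $(V,x)$ of $(g\exp(V),x)$; the target $(Ad(g)V,gx)$ agrees with $r(g\exp(V),x)=(Ad(g)V,gx)$, which lies in $\blup(G\ltimes M)$ by Proposition \ref{prop:gamma prop}.d; the unit $(e,(V,x))$ maps to $(\exp(V),x)$; and composition is intertwined since the product rule in $\hblup(G\ltimes M)$ gives $(g\exp(Ad(h)V),hx)\cdot(h\exp(V),x)=(gh\exp(V),x)=\Phi(gh,(V,x))$, matching the composite $(g,(Ad(h)V,hx))\cdot(h,(V,x))=(gh,(V,x))$ in $G\ltimes\blup(G\ltimes M)$. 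Continuity is immediate from the quotient description of the topology. Surjectivity then holds tautologically, since every element of $\hblup(G\ltimes M)$ is by definition of the form $(g\exp(V),x)$ with $(V,x)\in\blup(G\ltimes M)$.

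Next I would compute the kernel. The arrow $(g,(V,x))$ maps to a unit exactly when $g\exp(V)=\exp(V)$ as a coset, that is, when $g\in\exp(V)$; this is precisely the condition defining $N$. I should check that such arrows are genuinely isotropy, so that $N$ is a wide subgroupoid: by Proposition \ref{prop:gamma prop}.b one has $V\subseteq\mathfrak{h}_x$, so $\exp(V)\subseteq\exp(\mathfrak{h}_x)$ fixes $x$, giving $gx=x$, while $g\in\exp(V)$ gives $Ad(g)V=V$; hence both the source and the target of $(g,(V,x))$ equal $(V,x)$. Normality of $N$ follows from the identity $h\exp(V)h^{-1}=\exp(Ad(h)V)$: conjugating an element of $N$ lying over $(V,x)$ by an arrow $(h,(V,x))$ yields an element of $N$ lying over $(Ad(h)V,hx)$. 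Thus $N$ is a normal subgroupoid and the quotient groupoid $\left(G\ltimes\blup(G\ltimes M)\right)/N$ is defined.

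Finally I would identify the induced map with a groupoid isomorphism. Two arrows $(g,(V,x))$ and $(g',(V',x'))$ have the same image under $\Phi$ iff $(V,x)=(V',x')$ and $g^{-1}g'\in\exp(V)$, which is exactly the relation of lying in the same right $N$-coset; hence $\Phi$ descends to a bijective morphism on the quotient. For the topology, the key observation is that the map $\pi\colon G\times\blup(G\ltimes M)\to\hblup(G\ltimes M)$ defining the quotient topology on $\hblup(G\ltimes M)$ coincides with $\Phi$ on arrows, and the canonical projection $G\times\blup(G\ltimes M)\to\left(G\ltimes\blup(G\ltimes M)\right)/N$ has exactly the same fibers; therefore the two quotient topologies agree and the induced bijection is a homeomorphism. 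I do not expect any serious obstacle here: the only points requiring care are the domain/range consistency in the kernel computation and the observation that the target topology is, by construction, the quotient topology along $\Phi$, which renders the topological part of the isomorphism automatic.
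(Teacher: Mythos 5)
Your proof is correct and is exactly the verification the paper intends but omits (the paper merely remarks the proposition is ``straightforward to prove''): you check the functor axioms, identify the kernel via $g\exp(V)=\exp(V)$ iff $g\in\exp(V)$, and observe that $\Phi$ on arrows coincides with the map $\pi:G\times \blup(G\ltimes M)\to \hblup(G\ltimes M)$ that defines the quotient topology, so the two quotients have the same fibers and hence the same topology. The only non-tautological points --- that $N$ lies in the isotropy (using $V\subseteq\mathfrak{h}_x$ from Proposition \ref{prop:gamma prop} and $Ad(g)V=V$ for $g\in\exp(V)$) and that normality follows from $h\exp(V)h^{-1}=\exp(Ad(h)V)$ --- are handled correctly.
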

\paragraph{Lie algebroid.}
The groupoid $\hblup(G\ltimes M)$ being a continuous family groupoid has a Lie algebroid. The Lie algebroid can be reconstructed as follows. Let $(V,x)\in \blup(G\ltimes M)$. Then $\hblup(G\ltimes M)_{(V,x)}\simeq G/\exp(V)$. \begin{prop}\label{prop: Lie algebroid groups} The bundle over $\blup(G\ltimes M)$ whose fiber over $(V,x)$ is $$T_{(V,x)}\hblup(G\ltimes M)_{(V,x)}\simeq \frac{\mathfrak{g}}{V}$$ is a vector bundle.
\end{prop}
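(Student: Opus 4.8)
The plan is to recognise the fibre $\mathfrak{g}/V$ as the fibre of a tautological quotient bundle on the Grassmannian, and to obtain the asserted bundle by pullback and restriction. First I would work on each component $\GR_k(\mathfrak{g})$ separately: there the tautological subbundle $\cS_k\subseteq \GR_k(\mathfrak{g})\times \mathfrak{g}$, with fibre $V$ over $V$, is a smooth rank-$k$ subbundle of the trivial bundle, so the componentwise quotient yields a smooth vector bundle $\cQ\to \GR(\mathfrak{g})$ (of rank $\dim\mathfrak{g}-k$ on $\GR_k(\mathfrak{g})$) whose fibre over $V$ is canonically $\mathfrak{g}/V$. Writing $p:\GR(\mathfrak{g})\times M\to \GR(\mathfrak{g})$ for the projection, I would then define the candidate
$$T\cF:=\big(p^{*}\cQ\big)\big|_{\blup(G\ltimes M)},$$
the restriction to the closed subset $\blup(G\ltimes M)\subseteq \GR(\mathfrak{g})\times M$ of the pullback bundle $p^{*}\cQ$. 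Being the restriction of a smooth vector bundle on the ambient manifold, $T\cF$ is a vector bundle in the sense appropriate to the (possibly non-manifold) base, and its fibre over $(V,x)$ is $\mathfrak{g}/V$.

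Next I would match this fibre with the stated tangent space. By Proposition \ref{prop:gamma prop} every $V\in \blup(G\ltimes M)_x$ is a Lie subalgebra, and by Theorem \ref{closed} the subgroup $\exp(V)$ is closed, so $\hblup(G\ltimes M)_{(V,x)}=G/\exp(V)$ is a genuine smooth manifold whose base point is the unit $(e\exp(V),x)$. Since $\exp(V)$ has Lie algebra $V$, the standard identification of the tangent space of a homogeneous space at the identity coset gives
$$T_{(V,x)}\hblup(G\ltimes M)_{(V,x)}=T_{e\exp(V)}\big(G/\exp(V)\big)=\mathfrak{g}/V,$$
which is exactly the fibre of $T\cF$ produced above.

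The delicate point — and the step I expect to be the real content — is to check that this fibrewise identification is compatible with the bundle (local triviality) structure, i.e.\ that the local trivialisations of $\cQ$ coincide with the intrinsic tangent spaces of the varying fibres $G/\exp(W)$. I expect Theorem \ref{closed2} to supply precisely this compatibility: given $(V,x)$, choose $S\subseteq \mathfrak{g}$ transverse to $V$; for $(W,y)$ in a neighbourhood the composite $S\hookrightarrow \mathfrak{g}\twoheadrightarrow \mathfrak{g}/W$ is an isomorphism, which is exactly the local trivialisation of $\cQ$, while by Theorem \ref{closed2} the same $S$ gives the chart $X\mapsto \exp(X)\exp(W)$ of $G/\exp(W)$ near its identity coset, whose differential at $0$ is that very isomorphism. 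Hence the bundle chart of $p^{*}\cQ$ and the tangent spaces of the groupoid fibres are identified by one and the same map, so no further gluing argument is needed. Finally I would observe that in the disjoint-union topology on $\GR(\mathfrak{g})$ a convergence $V_n\to V$ forces $\dim V_n=\dim V$ eventually, so $\dim V$ is locally constant on $\blup(G\ltimes M)$ and the rank of $T\cF$ is locally constant, as a vector bundle requires.
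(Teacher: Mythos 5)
Your proof is correct and follows essentially the same route as the paper: the paper also defines the bundle as $i^*(F/E)$, the restriction to $\blup(G\ltimes M)$ of the quotient of the trivial bundle with fibre $\mathfrak{g}$ by the tautological subbundle on $\GR(\mathfrak{g})\times M$. Your additional verification that the trivialisations match the intrinsic tangent spaces of $G/\exp(W)$ via Theorem \ref{closed2} is a detail the paper leaves implicit, but it does not change the argument.
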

\begin{proof}
Consider the inclusion $i:\blup(G\ltimes M)\to \GR(\mathfrak{g})\times M$. Let $E$ be the canonical vector bundle on $\GR(\mathfrak{g})\times M$ whose fiber at $(V,x)\in \GR(\mathfrak{g})\times M$ is equal to $V$, $F$ the trivial vector bundle with fiber $\mathfrak{g}$. Then $i^*(F/E)$ is the above vector bundle.
\end{proof}
%\section{Blowup of regular foliations}\label{sec:holonomy}
%Let $F\subseteq TM$ be a regular foliation. Recall that $\cH(F)=\{(y,[\gamma],x):\gamma(0)=x,\gamma(1)=y\}$, where $\gamma:[0,1]\to M$ is a path in a leaf of $F$, and $%%[\gamma]$ denotes its holonomy class. The space $\cH(F)$ is a non Hausdorff smooth manifold. We regard $M$ as submanifold of $\cH(F)$ using diagonal inclusion. Let $x_n\in M$ be a sequence which converges to $x\in M$. Then let $A$ be the set of limit points of $x_n$ in $\cH(F)$. Since $s(A)=r(A)=x$, it follows that $A\subseteq \cH(F)_x^x=\hol(F,x)$ is a subgroup of the holonomy group at $x$. Let $\blup(M,F)_x$ be the set of all such subgroups for all sequences converging to $x$. We define $$\blup(M,F):=\{(A,x):A\in \blup(M,F)_x,x\in M\}.$$

\section{Blowup of singular foliations}\label{sec:general case}
\subsection{Singular foliations and fibers}
Let $M$ be a smooth manifold, $\cF$ a $C^\infty(M,\R)$-submodule of $\Gamma_c(TM)$. If $X_1,\cdots,X_k\in \cF$, $U\subseteq M$, then we say that $\cF$ is generated by $X_1,\cdots,X_k$ on $U$ if for any $X\in \cF$, there exists $f_1,\cdots,f_k\in C^\infty(U,\R)$ such that $$X=\sum_{i=1}^k f_i X_{i},\quad \text{on}\,\, U$$ 
\begin{dfn}A singular foliation on $M$ is a $C^\infty(M,\R)$-submodule $\cF$ of $\Gamma_c(TM)$.  such that \begin{enumerate}
\item $\cF$ is closed under Lie bracket, i.e., if $X,Y\in \cF$, then $[X,Y]\in \cF$.
\item $\cF$ is locally finitely generated, i.e., $M$ can be covered by open sets $(U_i)_{i\in I}$ such that on each $U_i$, there exists a finite family $X_1,\cdots,X_k\in\cF$ which generates $\cF$ on $U_i$ ($k$ can depend on $i$).
\end{enumerate}
\end{dfn}
\begin{ex}\label{ex:singular foliation group action} Let $G$ be a Lie group acting on $M$, $\natural:\mathfrak{g}\to \Gamma(TM)$ the anchor map. One has a singular foliation $\cF$ on $M$ defined by $$\cF=\{\sum_{i=1}^kf_i\natural(X_i):k\in \N,f_i\in C^\infty_c(M,\R),X_i\in\mathfrak{g}\}.$$ \end{ex}

Let $\cF$ be a singular foliation and $x\in M$. We define two fibers of $\cF$ at $x$ by $$\cF_x:=\frac{\cF}{I_x\cF},\quad F_x:=\{X(x)\in T_xM:X\in \cF\},$$ where $I_x\subseteq C^\infty(M,\R)$ is the ideal of smooth functions vanishing at $x$. If $X\in \cF$, then we denote by $[X]_x\in \cF_x$ the class of $X$. Let $\natural_x$ be the linear map $$\natural_x:\cF_x\to F_x,\quad \natural_x([X]_x)=X(x),\, \forall \,X\in \cF.$$ \begin{rem}\label{rem:Lie algebra hx}The Lie bracket of vector fields endows $\mathfrak{h}_x:=\ker(\natural_x)$ with the structure of a Lie algebra, given by $$[[X]_x,[Y]_x]:=[[X,Y]]_x,\quad X,Y\in \cF.$$\end{rem}
\begin{prop}[\cite{AS1}]\label{prop:AS}\begin{enumerate}
\item Let $x\in M$, $X_1,\cdots,X_k\in \cF$. Then the family $X_1,\cdots ,X_k$ generate $\cF$ on a neighbourhood of $x$ if and only if $[X_1]_x,\cdots ,[X_k]_x$ generate $\cF_x$ as a vector space. Furthermore $k$ is minimal if and only if $[X_1]_x,\cdots,[X_k]_x\in \cF_x$ form a basis of $\cF_x$.
\item The function $$M\to \mathbb{N}\sqcup\{0\},\quad x\to \dim(\cF_x)$$ is upper semi-continuous. The set of its points of continuity will be denoted by $M_{\areg}$.
\item The function $$M\to \mathbb{N}\sqcup\{0\},\quad x\to \dim(F_x)$$ is lower semi-continuous. The set of its points of continuity will be denoted by $M_{\reg}$.
\item One has $M_{\reg}\subseteq M_{\areg}$ and both sets are open and dense.
\item The set $M_{\reg}$ is equal to the set of points such that $\natural_x$ is an isomorphism.
\end{enumerate}
\end{prop}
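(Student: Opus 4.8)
The plan is to derive all five parts from a single algebraic input, Nakayama's lemma applied to the finitely generated module $\cF$ over the local ring of germs of smooth functions, and then to analyse and compare the two semicontinuous rank functions $x\mapsto\dim\cF_x$ and $x\mapsto\dim F_x$.

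First I would prove part 1, which is the engine for everything else. Fix $x$ and pass to germs: let $\cF_{(x)}$ be the localization of $\cF$ at $x$, a module over the local ring $\mathcal{O}_x$ of germs of smooth functions, with maximal ideal $\mathfrak{m}_x$ of germs vanishing at $x$. By local finite generation $\cF_{(x)}$ is finitely generated, and $\cF_x=\cF_{(x)}/\mathfrak{m}_x\cF_{(x)}$ is its fibre over the residue field $\R$. Nakayama's lemma, which needs only finiteness of the module and not Noetherianity of the (non-Noetherian) ring $\mathcal{O}_x$, then gives that $X_1,\dots,X_k$ generate $\cF_{(x)}$ if and only if the classes $[X_i]_x$ span $\cF_x$, and that minimal generating sets of $\cF_{(x)}$ correspond exactly to bases of $\cF_x$. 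The only point needing care is the translation between ``generating $\cF_{(x)}$'' and ``generating $\cF$ on a neighbourhood of $x$'': the forward implication is immediate, and for the converse one writes a fixed finite local generating family of $\cF$ as germ-combinations of the $X_i$, realises these identities on a common neighbourhood, and checks that a section whose germ vanishes near $x$ already lies in $I_x\cF$ (multiply by $1-\varphi$ for a bump function $\varphi\equiv 1$ near $x$, whose complement lies in $I_x$).

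Next, parts 2 and 3 follow quickly from part 1. For part 2, choose a basis $[X_1]_x,\dots,[X_k]_x$ of $\cF_x$; the $X_i$ then generate $\cF$ on a neighbourhood $U$, so for every $y\in U$ the classes $[X_i]_y$ span $\cF_y$ and $\dim\cF_y\le k=\dim\cF_x$, which is upper semicontinuity, and $M_{\areg}$ is the locus where this integer-valued function is locally constant. For part 3, with the same generators $F_y=\mathrm{span}\{X_1(y),\dots,X_k(y)\}$, so $\dim F_y$ is the rank of the matrix of the $X_i(y)$; nonvanishing of some $r\times r$ minor is an open condition, giving lower semicontinuity, and $M_{\reg}$ is the open locus of local constancy.

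Finally I would establish part 5 and part 4 together, which I expect to be the main obstacle, since here one must genuinely relate the two rank functions rather than manipulate a single module. The key computation: at $x\in M_{\reg}$ the integer $\dim F$ is locally constant equal to $r$, so after reordering a minimal generating family $X_1,\dots,X_k$ one may assume $X_1(x),\dots,X_r(x)$ independent; they then span the constant-rank subbundle $F$ near $x$, so each remaining generator satisfies $X_j(y)=\sum_{i\le r}a_{ji}(y)X_i(y)$ with smooth $a_{ji}$, whence $X_j-\sum_i a_{ji}X_i$ vanishes near $x$ and $[X_j]_x=\sum_i a_{ji}(x)[X_i]_x$. Since by part 1 the $[X_i]_x$ form a basis of $\cF_x$, this forces $k=r$, i.e.\ $\natural_x$ is an isomorphism. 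Conversely, if $\natural_x$ is an isomorphism then $\dim F_x=\dim\cF_x=:r$, and near $x$ the inequalities $r\le\dim F_y\le\dim\cF_y\le r$ — combining lower semicontinuity of $\dim F$, upper semicontinuity of $\dim\cF$, and the always-surjective $\natural_y$ — squeeze everything to $r$, so $x\in M_{\reg}$. Running the first implication at every point near a given $x\in M_{\reg}$ shows $\natural_y$ is an isomorphism there, hence $\dim\cF\equiv r$ near $x$ and $x\in M_{\areg}$, giving $M_{\reg}\subseteq M_{\areg}$. Openness of both sets is immediate from local constancy, and density follows because an integer-valued semicontinuous function on any small open set attains a local extremum on an open subset on which it is therefore constant.
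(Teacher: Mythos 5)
This proposition is quoted from \cite{AS1} and the paper contains no proof of its own, so the comparison is with the original Androulidakis--Skandalis argument: your proof is correct and follows essentially that same route --- Nakayama's lemma for finitely generated modules over the (non-Noetherian) local ring of germs for part 1, the resulting local generating families for the two semicontinuity statements, and the squeeze $r\le\dim F_y\le\dim\cF_y\le r$ (using surjectivity of $\natural_y$) for parts 4 and 5. You also correctly isolate and handle the one genuinely delicate point, namely the passage between germ-level identities and the quotient $\cF/I_x\cF$ via bump functions.
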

 In general the inclusion $M_{\reg}\subseteq M_{\mathrm{areg}}$ is strict, for example if $M=\R$, $\cF=\langle x\partial_x\rangle$, then $M_{\reg}=\R\backslash\{0\}$ and $M_{\mathrm{areg}}=\R$.
\begin{theorem}[\cite{Sussmann,Stefan}]Let $x\in M$. Then there exists a unique immersed connected smooth submanifold $l\subseteq M$ that contains $x$ such that $T_yl=F_y$ for all $y\in l$. The manifold $l$ is called the leaf of $x$.
\end{theorem}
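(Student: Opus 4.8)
The plan is to realise the leaf $l$ through $x$ as the \emph{orbit} of $x$ under the pseudogroup generated by the flows of the vector fields in $\cF$: explicitly, $l$ is the set of points $\exp_{X_N}^{t_N}\circ\cdots\circ\exp_{X_1}^{t_1}(x)$ with $X_1,\dots,X_N\in\cF$ and $t_i\in\R$. These compositions are always defined because the elements of $\cF$ are compactly supported, hence have complete flows. Everything rests on a single invariance property, which I would establish first.

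\textbf{Invariance of $F$ under the flows.} The key claim is that for every $X\in\cF$ and every $t$, the diffeomorphism $\exp_X^t$ satisfies $d(\exp_X^t)_y(F_y)=F_{\exp_X^t(y)}$ for all $y$. To prove it, fix $y$ and use Proposition \ref{prop:AS} to pick $X_1,\dots,X_n\in\cF$ generating $\cF$ on a neighbourhood $U$ of $y$, and take $t$ small enough that $\exp_X^s(y)\in U$ for $|s|\le|t|$. Closure under the Lie bracket gives smooth functions $c_j^l$ on $U$ with $[X,X_j]=\sum_l c_j^l X_l$. Writing $Y_j(s):=(\exp_X^{-s})_*X_j$, the flow-derivative formula yields the linear system $\tfrac{d}{ds}Y_j(s)=(\exp_X^{-s})_*[X,X_j]=\sum_l (c_j^l\circ\exp_X^s)\,Y_l(s)$, a pointwise linear ODE with smooth time-dependent coefficients. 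Its solution is $Y_j(s)=\sum_l\Phi_j^l(s)\,X_l$ for the smooth fundamental matrix $\Phi$, so $(\exp_X^{-t})_*X_j(y)\in\operatorname{span}\{X_1(y),\dots,X_n(y)\}=F_y$. Hence $d(\exp_X^{-t})(F_{\exp_X^t(y)})\subseteq F_y$; applying this with $-t$ in place of $t$ and using that $d(\exp_X^t)$ is invertible upgrades the inclusion to equality for small $t$, and the one-parameter group law $\exp_X^{t+s}=\exp_X^t\circ\exp_X^s$ propagates it to all $t$. This step is the main obstacle, being exactly where both hypotheses---closure under the Lie bracket and local finite generation---enter; the crux is reducing the problem to a linear ODE governed by the structure functions $c_j^l$. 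In passing one sees that $\dim F$ is constant along flow lines, hence along $l$.

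\textbf{Local charts.} Set $k=\dim F_x$ and, by Proposition \ref{prop:AS}, choose $X_1,\dots,X_k\in\cF$ with $X_1(x),\dots,X_k(x)$ a basis of $F_x$. Consider $\phi(t_1,\dots,t_k)=\exp_{X_1}^{t_1}\circ\cdots\circ\exp_{X_k}^{t_k}(x)$ near $0\in\R^k$. One has $d\phi_0(\partial_{t_j})=X_j(x)$, so $d\phi_0$ is injective with image $F_x$. Differentiating $\phi$ in each $t_j$ and invoking the invariance above shows $\partial_{t_j}\phi\in F_{\phi(t)}$, so $\im d\phi_t\subseteq F_{\phi(t)}$; since $\dim F_{\phi(t)}=k$ along the orbit while $d\phi_0$ already has rank $k$, the map $\phi$ is an immersion near $0$ with $\im d\phi_t=F_{\phi(t)}$. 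Thus each such $\phi$ parametrises a $k$-dimensional integral manifold of $F$ inside $l$. I would then give $l$ the smooth structure in which these maps (based at arbitrary points of $l$) are local parametrisations; the invariance guarantees that overlapping charts differ by a smooth transition, making $l$ a smooth $k$-manifold immersed in $M$ with $T_yl=F_y$, connected because every point is joined to $x$ by a concatenation of flow lines.

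\textbf{Uniqueness.} Let $l'$ be any connected immersed submanifold through $x$ with $T_yl'=F_y$ for all $y\in l'$. Every $X\in\cF$ is then tangent to $l'$, so its flow preserves $l'$; applying flows from $x$ keeps us inside $l'$, whence $l\subseteq l'$. Conversely, near any $y\in l'$ the chart $\phi$ built from elements of $\cF$ whose values span $F_y=T_yl'$ has image in $l'$ and covers a neighbourhood of $y$ in $l'$; thus the set of points of $l'$ reachable from $x$ by flows is open, and by connectedness equals all of $l'$, giving $l'\subseteq l$. Hence $l=l'$.
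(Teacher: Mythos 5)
Your existence argument is the classical Sussmann orbit method, and it is essentially sound; note that the paper offers no proof of this statement (it is quoted from Stefan and Sussmann), so the only benchmark is the classical route, which you follow. The core step is correct: the identity $\tfrac{d}{ds}(\exp_X^{-s})_*X_j=\sum_l (c_j^l\circ\exp_X^s)\,(\exp_X^{-s})_*X_l$ reduces flow-invariance of $F$ to a linear ODE in $T_yM$, and the chart construction with lower semicontinuity of rank is the standard one. The one soft spot in this half is the sentence ``the invariance guarantees that overlapping charts differ by a smooth transition'': that is precisely the delicate point of the orbit theorem and deserves an argument. The standard fix: if $S$ is the image of one chart (an embedded $k$-manifold with $T_qS=F_q$), then every $X\in\cF$ restricts to a smooth vector field on $S$, so by uniqueness of integral curves the flow of $X$ starting in $S$ stays in $S$ for small time and depends smoothly on initial data \emph{within} $S$; hence a second chart, being a composition of such flows, maps smoothly into $S$ near a common point, giving smooth transitions. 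This is fixable, but as written it is an assertion, not a proof.

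The genuine gap is in uniqueness, in the direction $l\subseteq l'$. The step ``every $X\in\cF$ is tangent to $l'$, so its flow preserves $l'$'' is false: tangency gives only \emph{local} invariance, and the maximal integral curve of $X|_{l'}$ can run off the end of $l'$ in finite time even though $\exp_X^t$ is globally defined on $M$. Concretely, take $M=\R$, $\cF=\Gamma_c(T\R)$, $x=0$, and $l'=\left]-1,1\right[$: this is a connected embedded submanifold containing $x$ with $T_yl'=F_y$ for all $y\in l'$, yet the flow of any $X\in\cF$ positive near $[-1,1]$ exits $l'$. The same example shows that uniqueness \emph{as literally stated} cannot hold: any connected open neighbourhood of $x$ in the leaf satisfies the hypotheses. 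The statement must be read, as in Stefan and Sussmann, with $l$ the \emph{maximal} connected integral manifold. Under that reading, the half you did prove, $l'\subseteq l$, is the correct content (your openness argument also needs its complement-open twin: the chart neighbourhood of a point $y\in l'$ consists of points mutually reachable with $y$ by flows, so it meets the reachable set only if $y$ itself is reachable; connectedness then gives $l'\subseteq l$), and one concludes by observing that $l'$ is then an open submanifold of $l$, so $l$ is the unique maximal such manifold. As it stands, your final equality $l=l'$ proves a false statement, which is the clearest sign the argument for $l\subseteq l'$ had to fail.
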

\subsection{Blow-up space}
 Let $x\in M$, $k=\dim(\cF_x)$, $X_1,\cdots,X_k\in\cF$ be a family such that $[X_1]_x,\cdots,[X_k]_x\in \cF_x$ form a basis. By Proposition \ref{prop:AS}, it follows that \begin{itemize}
\item There exists a neighbourhood $U$ of $x$ such that $X_1,\cdots,X_k\in\cF$ generate $\cF$ on $U$.
 \item if $y\in U$, then $[X_1]_y,\cdots,[X_k]_y$ generate $\cF_y$.
 \end{itemize} Let $\phi_y$ be the surjective linear map defined by \begin{align}\label{dfn phi}
\phi_y:\cF_x\to \cF_y,\quad \phi_y([X_i]_x)=[X_i]_y,\quad \forall i\in\{1,\cdots,k\}.
\end{align}

\begin{dfn}The set $\cblup(\cF)_x$ (and respectively $\blup(\cF)$) denotes the set of $V\subseteq \cF_x$ such that there exists a sequence $x_n\in M$ ($x_n\in M_{\reg}$) such that \begin{equation}\label{eqn:conv blowup}
x_n\to x,\quad \phi_{x_n}^{-1}(\mathfrak{h}_{x_n})\to V.
\end{equation} 
\end{dfn}
All of the results of this article (except Proposition \ref{prop:gamma foliation}.3 and the results in Section \ref{sec:semicont fields}) work equally well if one literally replaces $\blup(\cF)$ with $\cblup(\cF)$. In our opinion, the space $\blup(\cF)$ is the more natural space to consider.
\begin{prop}\label{prop:gamma foliation}
For $x\in M$, the set $\blup(\cF)_x$ is a non-empty closed subset of $\GR(\cF_x)$ which doesn't depend on the choice of $X_1,\cdots,X_k$. Furthermore if $V\in \blup(\cF)_x$, then \begin{enumerate}
\item One has $V\subseteq \mathfrak{h}_x$.
\item The vector subspace $V$ is a Lie subalgebra of $\mathfrak{h}_x$.
\item One has $x\in M_{\areg}$ if and only if $\blup(\cF)_x=\{0\}$.
\end{enumerate}
\end{prop}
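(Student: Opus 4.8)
The plan is to reduce everything to the single family of linear maps $\psi_y:=\natural_y\circ\phi_y\colon\cF_x\to T_yM$, $y\in U$, which sends $[X_i]_x$ to $X_i(y)$. Working in a chart around $x$ that trivialises $TM$ I may regard these as maps into a fixed $\R^{\dim M}$ depending continuously on $y$, with $\psi_{x_n}\to\natural_x$ as $x_n\to x$. Since $\phi_y$ is surjective, $\phi_y^{-1}(\mathfrak{h}_y)=\ker\psi_y$ and $\psi_y(\cF_x)=F_y$, so $\dim\ker\psi_y=\dim\cF_x-\dim F_y$; thus \eqref{eqn:conv blowup} just says $\ker\psi_{x_n}\to V$ in $\GR(\cF_x)$. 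Non-emptiness and closedness are then soft: density of $M_{\reg}$ (\cref{prop:AS}) gives $x_n\in M_{\reg}$ with $x_n\to x$, and after passing to a subsequence on which $\dim\ker\psi_{x_n}$ is constant, compactness of the relevant Grassmannian produces a limit $V\in\blup(\cF)_x$, while closedness follows from a diagonal extraction. Independence of the generators is the remark that a second choice with $[X_i']_x=\sum_jc_{ij}[X_j]_x$ yields $\psi_{x_n}'=\psi_{x_n}\circ T_n$ with $T_n\to\id$, because $X_i'-\sum_jc_{ij}X_j\in I_x\cF$ contributes terms evaluating to $o(1)$ at $x_n$; hence $\ker\psi_{x_n}'=T_n^{-1}\ker\psi_{x_n}$ has the same limit set.

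For (1), if $v=\lim v_n$ with $v_n\in\ker\psi_{x_n}$ and coordinates $v_n=\sum_ia_i^{(n)}[X_i]_x$, $a^{(n)}\to a$, then passing the relation $\sum_ia_i^{(n)}X_i(x_n)=0$ to the limit gives $\natural_x(v)=\sum_ia_iX_i(x)=0$, i.e. $v\in\mathfrak{h}_x$.

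Part (2) is the crux, because $\cF_x$ carries no Lie bracket and $\ker\psi_{x_n}\not\subseteq\mathfrak{h}_x$. Writing $[X_i,X_j]=\sum_lc_{ij}^lX_l$ on $U$, I introduce for each $n$ the antisymmetric bilinear map $\beta_n$ on $\cF_x$ with $\beta_n([X_i]_x,[X_j]_x)=\sum_lc_{ij}^l(x_n)[X_l]_x$. The key point is that $\ker\psi_{x_n}$ is genuinely closed under $\beta_n$: given $v_n,w_n\in\ker\psi_{x_n}$, the constant-coefficient fields $\tilde v_n=\sum_ia_i^{(n)}X_i$ and $\tilde w_n=\sum_jb_j^{(n)}X_j$ vanish at $x_n$, hence so does $[\tilde v_n,\tilde w_n]$, and a direct evaluation gives $\psi_{x_n}(\beta_n(v_n,w_n))=[\tilde v_n,\tilde w_n](x_n)=0$. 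As $n\to\infty$ one has $\beta_n\to\beta$ coefficientwise, where $\beta([X_i]_x,[X_j]_x)=[[X_i,X_j]]_x$ is a fixed bilinear map; so for $v,w\in V$ approximated by $v_n,w_n\in\ker\psi_{x_n}$ the elements $\beta_n(v_n,w_n)\in\ker\psi_{x_n}$ converge to $\beta(v,w)$, forcing $\beta(v,w)\in V$. Finally $V\subseteq\mathfrak{h}_x$ by (1), and on $\mathfrak{h}_x$ the map $\beta$ restricts to the intrinsic bracket of \cref{rem:Lie algebra hx} (evaluate it on the representatives $\tilde v,\tilde w$, which vanish at $x$), so $V$ is a Lie subalgebra of $\mathfrak{h}_x$. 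I expect this $n$-dependent bracket device---replacing the missing bracket on $\cF_x$ by a convergent family of brackets under which each $\ker\psi_{x_n}$ is truly closed---to be the main obstacle.

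For (3) I use that along $M_{\reg}$ one has $\mathfrak{h}_{x_n}=0$ (\cref{prop:AS}), so $\ker\psi_{x_n}=\ker\phi_{x_n}$ has dimension $\dim\cF_x-\dim\cF_{x_n}$, which vanishes exactly when $\dim\cF_{x_n}=\dim\cF_x$. If $x\in M_{\areg}$, upper semicontinuity of $\dim\cF$ forces $\dim\cF_y=\dim\cF_x$ for all $y$ near $x$, so every nearby $\ker\psi_{x_n}$ is $0$ and $\blup(\cF)_x=\{0\}$. If $x\notin M_{\areg}$, upper semicontinuity yields $y_n\to x$ with $\dim\cF_{y_n}<\dim\cF_x$; approximating each $y_n$ by a regular point in a neighbourhood where $\dim\cF\le\dim\cF_{y_n}$ produces $x_n\in M_{\reg}$, $x_n\to x$ with $\dim\ker\psi_{x_n}\ge1$, so a subsequential limit is a nonzero member of $\blup(\cF)_x$. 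This is the only place that really needs $x_n\in M_{\reg}$, which is why it is exactly the statement that fails for $\cblup(\cF)$.
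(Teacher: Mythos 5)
Your proposal is correct and takes essentially the same route as the paper's proof: your $\psi_y=\natural_y\circ\phi_y$ and $T_n$ repackage the paper's $\phi_y$ (via $\phi_y^{-1}(\mathfrak{h}_y)=\ker\psi_y$) and its correction maps $L_y\to\mathrm{id}$, and your ``$n$-dependent bracket'' $\beta_n$ is exactly the paper's computation with the structure functions $f_{ij}^l$ evaluated at $x_n$, where closedness of $\ker\psi_{x_n}$ under $\beta_n$ is the paper's identity $\sum_{i,j,l}a_i^nb_j^nf_{ij}^l(x_n)X_l(x_n)=\left[\sum_i a_i^nX_i,\sum_j b_j^nX_j\right](x_n)=0$. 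Your treatment of part (3) via upper semicontinuity and approximation by regular points likewise just spells out the paper's terser argument, so there is nothing to fix.
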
\begin{proof}
By compactness of the Grassmannian spaces, $\blup(\cF)_x$ is  non-empty. For independence, let $Y_1,\cdots,Y_k\in \cF$ such that $[Y_1]_x,\cdots ,[Y_k]_x$ form a basis of $\cF_x$. Suppose $U$ is small enough that $\cF$ is generated by $X_1,\cdots,X_k$ on $U$ and is also generated by $Y_1,\cdots,Y_k$ on $U$. It follows that there exists smooth functions $f_{ij}\in C^\infty(U,\R)$ such that $$X_i=\sum_{j=1}^k f_{ij}Y_j,\quad (\text{on }U).$$ Let $\psi_y:\cF_x\to \cF_y,L_y:\cF_x\to \cF_x$ be the linear maps defined by $$\psi_y([Y_i]_x)=[Y_i]_y,\quad L_y([X_i]_x)=\sum_{j=1}^kf_{ij}(y)[Y_j]_x.$$ Clearly $\psi_y\circ L_y=\phi_y$. Since $L_y$ converges to the identity as $y\to x$, the result follows. 

The inclusion $V\subseteq \mathfrak{h}_x$ follows directly from Eqn. \eqref{eqn:conv blowup}. We now prove that $V$ is a Lie subalgebra. Since $\cF$ is closed under Lie brackets, there exists smooth functions $f_{ij}^l\in C^\infty(U,\R)$ such that \begin{equation}\label{eqn:6 isom}
 [X_i,X_j]=\sum_{l=1}^k f_{ij}^lX_l,\quad (\text{on }U).
\end{equation} Let $A,B\in V$ with $$A=\sum_{i=1}^ka_i[X_i]_x,\quad B=\sum_{i=1}^kb_i[X_i]_x.$$ Let $x_n\in M_{\reg}$  as in Eqn. \eqref{eqn:conv blowup}. Hence there exists sequences $a_1^n,\cdots,a_k^n,b_1^n,\cdots,b_k^n$ such that $$a_i^n\xrightarrow{n\to \infty} a_i,\quad b_i^n\xrightarrow{n\to \infty} b_i,\quad \forall i\in \{1,\cdots,k\},\quad \text{and}\quad \sum_{i=1}^ka_i^nX_i(x_n)=\sum_{i=1}^kb_i^nX_i(x_n)=0.$$ Therefore \begin{align}\label{eqn:8 isom}
 \sum_{i,j}a_i^nb_j^n[X_i,X_j](x_n)=\left[\sum_{i=1}^k a_i^nX_i,\sum_{i=1}^kb_i^nX_i\right](x_n)=0.
\end{align} By Equation \eqref{eqn:6 isom} and \eqref{eqn:8 isom}, one gets $$\sum_{i,j,l=1}^ka_i^nb_j^nf_{ij}^l(x_n)X_l(x_n)=0.$$ Hence $\sum_{i,j,l=1}^ka_i^nb_j^nf_{ij}^l(x_n)[X_l]_x\in \phi_{x_n}^{-1}(\mathfrak{h}_{x_n})$ and thus $\sum_{i,j,l=1}^ka_ib_jf_{ij}^l(x)[X_l]_x\in V$. By Eqn. \eqref{eqn:6 isom}, $$\left[[\sum_{i=1}^ka_i X_i]_x,[\sum_{i=1}^kb_i X_i]_{x}\right]=\sum_{i,j,l=1}^ka_ib_jf_{ij}^l(x)[X_l]_x\in V.$$

If $x\in M_{\areg}$, then for $y$ close enough to $x$, $\phi_y:\cF_x\to \cF_y$ is an isomorphism. If $y\in M_{\reg}$, then $\phi_y^{-1}(\mathfrak{h}_y)=\ker(\phi_y)=0$. Hence $\blup(\cF)_x=\{0\}.$ Compactness of Grassmannian spaces and $\blup(\cF)_x=\{0\}$ imply that $x\in M_{\areg}$.
\end{proof}
Proposition \ref{prop:gamma foliation}.3 is false for $\cblup(\cF)$, because $\mathfrak{h}_x\in\cblup(\cF)_x$ for all $x\in M$. It is true however that $x\in M_{\reg}$ if and only if $\cblup(\cF)_x=\{0\}$. 
%\paragraph{Automorphisms.} Let $\varphi:M\to M$ is a diffeomorphism such that if $X\in \cF$, then $\varphi_*(X)\in \cF$. We call $\varphi$ an automorphism of $\cF$. For $x\in M$, one has a linear isomorphism $$\varphi_{*}:\cF_x\to \cF_{\varphi(x)},\quad \varphi_*([X]_x)=[\varphi_*(X)]_{\varphi(x)},\,\forall\, X\in \cF.$$ If $X\in \cF$, then by \cite[Proposition 1.6]{AS1}, $\exp_X$ is an automorphism of $\cF$. The subgroup of automorphism generated by $\exp_X$ for $X\in \cF$ is denoted by $\exp(\cF)$. \begin{prop}\label{prop:auto}If $x\in M$, $V\in \blup(\cF)_x$, then $\varphi_*(V)\in \blup(\cF)_{\varphi(x)}$.
%\end{prop}
%\begin{proof}
%This follows directly from independence of basis in Proposition \ref{prop:gamma foliation}.
%\end{proof}
%\begin{prop}Let $X\in \cF$ with $X(x)=0$. We define $\ad_X:\cF_x\to \cF_x$ by $[Y]_x\mapsto [[X,Y]]_{x}$. Then $\exp_{X*}:\cF_x\to \cF_x$ is equal to $e^{\ad_X}$.
%\end{prop} 
%\begin{proof}
%The maps $\exp_{tX*}:\cF_x\to \cF_x$ form a $1$-parameter group acting on $\cF_x$. Their derivative at $t=0$ is equal to $\ad_X$. The result follows.
%\end{proof}
\paragraph{Topology.} Let $\blup(\cF)=\{(V,x):x\in M,V\in \blup(\cF)_x\}$. We equip $\blup(\cF)$ with the topology such that \begin{itemize}
\item the natural projection $\pi_{\blup(\cF)}:\blup(\cF)\to M$ is continuous \item if $U\subseteq M$ is an open neighbourhood of $x\in M$, and $X_1,\cdots,X_k\in \cF$ as above, then the inclusion \begin{align}\label{eqn:top blowup}
 \pi^{-1}_{\blup(\cF)}(U)\cap \blup(\cF)\to \GR(\cF_x)\times U,\quad (V,y)\to (\phi_y^{-1}(V),y)
\end{align} is an embedding.
\end{itemize} 
It is straightforward to check that the topology on $\blup(\cF)$ is well defined (different local generators give compatible maps) and that it is a locally compact and Hausdorff.

The topology on $\blup(\cF)$ can also be defined intrinsically as follows. In \cite{AS2}, the authors equip the space $\cF^*:=\sqcup_{x\in M}\cF_x^*$ with the weakest topology such that\begin{itemize}
 \item the projection $\pi:\cF^*\to M$ is continuous
 \item if $X\in \cF$, then the natural map $\cF^*\to \R$ defined by $\xi\in \cF_x^*\mapsto \langle \xi,[X]_x\rangle$ is continuous.
\end{itemize} The topology on $\blup(\cF)$ is the weakest topology such that a net $(V_n,x_n)\to (V,x)$ if and only if \begin{itemize}
\item $x_n\to x$ and $\dim(V_n^\perp)\to \dim(V^\perp)$.\footnote{The function $\blup(\cF)\to \Z,\quad (V,x)\mapsto \dim(V)$ isn't continuous, only $x\mapsto \dim(V^\perp)=\dim(\cF_x)-\dim(V)$ is.}
\item for every $\xi_n\in \cF_{x_n}^*$, if  $\xi_n\in \cF^*$ converges to $\xi\in \cF_x^*\subseteq \cF^*$ and $\xi_n\in V_n^\perp$ for all $n$, then $\xi\in V^\perp$.
\end{itemize} 
\begin{exs}\label{exs 1}\begin{enumerate}
\item Let $N\subseteq M$ be a smooth submanifold, $\cF$ the module of compactly supported vector fields on $M$ which vanish on $N$. Then one sees that \begin{align*}
\cF_x=\begin{cases}T_xM&\text{if}\, x\notin N\\\mathrm{Hom}(\frac{T_xM}{T_xN},T_xM)&\text{if}\, x\in N\end{cases},\quad F_x=\begin{cases}T_xM& \text{if}\, x\notin N\\0&\text{if}\, x\in N\end{cases}.
\end{align*}
Hence $M_{\reg}=M_{\areg}=N^c$. A direct computation shows that for $x\in N$, 
$$\blup(\cF)_{x}=\{\ker(v):v\in \frac{T_xM}{T_xN},v\neq 0\},\quad \ker(v)=\{L\in \mathrm{Hom}(\frac{T_xM}{T_xN},T_xM):L(v)=0\}.$$ Hence $\blup(\cF)_x$ can be identified with the projective space $\mathbb{P}(T_xM/T_xN)$. Furthermore the space $\blup(\cF)$ is then homeomorphic to the classical blow-up of $M$ along $N$. One also checks that $\cblup(\cF)=\blup(\cF)\sqcup N$ with the disjoint union topology.
\item If $G$ is a connected Lie group acting on $M$ with anchor map $\natural$. One has a surjective map $\tilde{\natural}_x:\mathfrak{g}\to \cF_x$ given by $\tilde{\natural}_x(X)=[\natural(X)]_x$. It is straightforward to check that if $V\in \blup(G\ltimes M)_x$, then $\ker(\tilde{\natural}_x)\subseteq V$ and $\blup(\cF)_x=\{\tilde{\natural}_x(V):V\in \blup(G\ltimes M)\}$.
\end{enumerate}
\end{exs}
\begin{rem}\label{rem:open}The map $\pi_{\cblup(\cF)}:\cblup(\cF)\to M$ isn't necessarily open as Example \ref{exs 1}.1 shows. I don't have an example where $\pi_{\blup(\cF)}:\blup(\cF)\to M$ isn't open.
\end{rem}
\paragraph{The bundle $T\cF$} Since the space $\blup(\cF)$ embeds locally inside a Grassmannian space. The bundle over $\blup(\cF)$ whose fiber over $(V,x)$ is $\cF_x/V$ is naturally a vector bundle. It satisfies the properties stated in Theorem \ref{intro thm}.2 (with $U=M_{\reg}$.)
\paragraph{Functoriality of the blowup space.}Let $\phi:N\to M$ be a smooth submersion, $\cF$ a singular foliation on $M$. Then $\phi^{-1}(\cF)$ denotes the singular foliation on $N$ of all $X\in \Gamma_c(TN)$ such that there exist  $k\in \N$ and $f_1,\cdots ,f_k\in C^\infty(N),X_1,\cdots ,X_k\in \cF$  such that\begin{equation}\label{eqn:proof frak d def 3}
 d\phi(X)=\sum_i f_i X_i\circ \phi.
\end{equation} By \cite[Proposition 1.10]{AS1}, $\phi^{-1}(\cF)$ is a singular foliation. Let $x\in N$. Then there is a natural map $$\mathfrak{d}_x\phi:\phi^{-1}(\cF)_x\to \cF_{\phi(x)}$$ defined as follows. Let $X\in \phi^{-1}(\cF)_x$, $f_i\in C^\infty(N)$, $X_i\in \cF$ as in Eqn. \eqref{eqn:proof frak d def 3}. We define $$\mathfrak{d}_x\phi([X]_x):=\sum_i f_i(x)[X_i]_{\phi(x)}\in \cF_{\phi(x)}.$$ \begin{prop}\label{prop:Funct pullback} The map $\mathfrak{d}_x\phi$ is a well defined surjective linear map and 
\begin{equation}\label{eqn:proof math frak d}
 \natural_{\phi(x)}\circ \mathfrak{d}_x\phi=d_{x}\phi\circ \natural_{x}.
\end{equation} 
\end{prop}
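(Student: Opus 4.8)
The plan is as follows. The intertwining identity \eqref{eqn:proof math frak d} and the $\R$-linearity of $\mathfrak{d}_x\phi$ are essentially formal once the map exists, so the entire content lies in proving that $\mathfrak{d}_x\phi$ is well defined. First I would isolate the single algebraic statement on which everything hinges: \emph{if $Z_1,\dots,Z_m\in\cF$ and $h_1,\dots,h_m\in C^\infty(N)$ satisfy $\sum_i h_i\,(Z_i\circ\phi)=0$ as a section of $\phi^*TM$, then $\sum_i h_i(x)[Z_i]_{\phi(x)}=0$ in $\cF_{\phi(x)}$.} Granting this, well-definedness follows in two moves. Independence of the value $\sum_i f_i(x)[X_i]_{\phi(x)}$ from the chosen expression $d\phi(X)=\sum_i f_i\,X_i\circ\phi$ in \eqref{eqn:proof frak d def 3} is exactly the claim applied to the difference of two such expressions for the same $X$. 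And the fact that the resulting map on $\phi^{-1}(\cF)$ annihilates $I_x\phi^{-1}(\cF)$, hence descends to $\phi^{-1}(\cF)_x$, follows because $d\phi$ is $C^\infty(N)$-linear: a representative $X=\sum_\beta p_\beta U_\beta$ with $p_\beta(x)=0$ produces, after choosing expressions for each $d\phi(U_\beta)$, an expression for $d\phi(X)$ all of whose coefficients vanish at $x$.

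The key step, and the only place the submersion hypothesis is used in an essential way, is the boxed claim, which I would prove using a local section. Choose an open set $U\ni\phi(x)$ in $M$ and a smooth section $\sigma:U\to N$ of $\phi$ with $\sigma(\phi(x))=x$; such $\sigma$ exists precisely because $\phi$ is a submersion. Pulling the relation back along $\sigma$ converts the pointwise identity over $N$ into the vanishing of a genuine vector field on $U$: evaluating $\sum_i h_i\,(Z_i\circ\phi)=0$ at $y=\sigma(w)$ and using $\phi\circ\sigma=\id$ gives $\sum_i (h_i\circ\sigma)(w)\,Z_i(w)=0$ for every $w\in U$, i.e. $\sum_i (h_i\circ\sigma)Z_i=0$ in $\Gamma(TU)$. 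Multiplying by a cutoff $\chi\in C^\infty_c(U)$ with $\chi\equiv 1$ near $\phi(x)$ yields an element $\sum_i \chi\,(h_i\circ\sigma)\,Z_i\in\cF$ that is identically zero, so its class in $\cF_{\phi(x)}$ vanishes; expanding that class by the module relation $[fZ]_{\phi(x)}=f(\phi(x))[Z]_{\phi(x)}$ and using $(\chi\,(h_i\circ\sigma))(\phi(x))=h_i(x)$ gives exactly $\sum_i h_i(x)[Z_i]_{\phi(x)}=0$.

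With well-definedness in hand the remaining assertions are quick. Surjectivity is immediate: given $[W]_{\phi(x)}\in\cF_{\phi(x)}$ with $W\in\cF$, the submersion $\phi$ admits a lift $X\in\Gamma_c(TN)$ with $d\phi(X)=W\circ\phi$ (built locally and cut off near $x$), which lies in $\phi^{-1}(\cF)$ by the very definition \eqref{eqn:proof frak d def 3} and satisfies $\mathfrak{d}_x\phi([X]_x)=[W]_{\phi(x)}$. Linearity follows by concatenating the defining expressions of $X$ and $X'$ into a single one for a linear combination. Finally, the identity \eqref{eqn:proof math frak d} is obtained by evaluating the defining relation at the single point $x$: both $\natural_{\phi(x)}(\mathfrak{d}_x\phi([X]_x))$ and $d_x\phi(\natural_x([X]_x))=d_x\phi(X(x))=d\phi(X)(x)$ equal $\sum_i f_i(x)\,X_i(\phi(x))$.

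The main obstacle is the boxed claim. A priori the hypothesis only controls $\sum_i h_i\,(Z_i\circ\phi)$ as a section over $N$; in particular, evaluating it at $x$ alone yields merely $\natural_{\phi(x)}\big(\sum_i h_i(x)[Z_i]_{\phi(x)}\big)=0$, which is weaker than the desired conclusion because $\natural_{\phi(x)}$ has kernel $\mathfrak{h}_{\phi(x)}$. The local section $\sigma$ is what bridges this gap by promoting the fiberwise relation to an honest vector-field identity on $M$, and it is here that one genuinely needs $\phi$ to be a submersion.
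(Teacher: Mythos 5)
Your proof is correct and follows essentially the same route as the paper: the decisive step in both is precomposing the relation $d\phi(X)=\sum_i f_i\,X_i\circ\phi$ with a local section of $\phi$ through $x$ to promote it to a genuine identity of elements of $\cF$ near $\phi(x)$, then taking classes in $\cF_{\phi(x)}$; your descent to $\phi^{-1}(\cF)_x$ via $I_x\phi^{-1}(\cF)$ and the evaluation argument for \eqref{eqn:proof math frak d} also match the paper's. Your version is slightly more explicit than the paper's (the cutoff $\chi$ making the pulled-back field a global element of $\cF$, the requirement $\sigma(\phi(x))=x$, and the lifting argument for surjectivity, which the paper leaves as ``clear''), but these are elaborations of the same argument rather than a different one.
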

\begin{proof}We will show that $\mathfrak{d}_x\phi([X]_x)$ doesn't depend on the choice of $f_i,X_i$. Suppose that $g_i\in C^\infty(N),Y_i\in \cF$ such that \begin{equation}\label{eqn:proof frak d def}
 d\phi(X)=\sum f_i X_i\circ \phi=\sum g_i Y_i\circ \phi.
\end{equation} Let $h:\dom(h)\subseteq M\to N$ be any local section of $\phi$ around $\phi(x)$. We precompose Eqn. \eqref{eqn:proof frak d def} with $h$ to obtain \begin{equation}\label{eqn:proof frak d def 2}
\sum (f_i\circ h) X_i=\sum_i (g_i\circ h) Y_i\end{equation}
Both sides of Equation \eqref{eqn:proof frak d def 2} are elements of $\cF$ (defined locally around $\phi(x)$) and hence we can take their class in $\cF_{\phi(x)}$. We get $$\sum f_i(x) [X_i]_{\phi(x)}=\sum g_i(x) [Y_i]_{\phi(x)}.$$ This shows that $\mathfrak{d}_x\phi([X]_x)$ only depends on $X$. To show only dependence on $[X]_x$, let $X'\in \phi^{-1}(\cF)$ such that $[X']_{x}=[X]_x$. Then $X'-X\in I_x\phi^{-1}(\cF)$. Hence there exists $l_i\in C^\infty(N),Y_i\in\phi^{-1}(\cF)$ such that $X'-X=\sum_i l_i Y_i$ and $l_i(x)=0$. By writing each $d\phi(Y_i)$ as in Eqn. \eqref{eqn:proof frak d def 3}, it follows that one can write $X'-X$ as in Eqn. \eqref{eqn:proof frak d def 3} with each $f_i(x)=0$. It follows that $\mathfrak{d}_x\phi([X]_x)$ is well defined. 

It is clear that $\mathfrak{d}_x\phi([X]_x)$ is linear surjective. For Eqn. \eqref{eqn:proof math frak d}, one has $$\natural_{\phi(x)}(\mathfrak{d}_x\phi([X]_x))=\natural_{\phi(x)}\left(\sum_i f_i(x)[X_i]_{\phi(x)}\right)=\sum_i f_i(x)X_i(\phi(x))=d_x\phi(X(x)).$$This finishes the proof.% 
\end{proof}
We remark that $\Gamma(\ker(d\phi))\subseteq \phi^{-1}(\cF)$. By taking the localization of each module at $x$ and using the fact that $\ker(d\phi)$ is a vector bundle, one obtains a linear map $$\ker(d\phi)_x\to \phi^{-1}(\cF)_{x}.$$\begin{prop}\label{prop:short exact seq fibers pullback}One has a short exact sequence $$0\to \ker(d\phi)_x\to \phi^{-1}(\cF)_{x}\xrightarrow{\mathfrak{d}_{x}\phi}\cF_{\phi(x)}\to 0.$$
\end{prop}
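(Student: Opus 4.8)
The statement asserts exactness at three spots, of which two are essentially free. Surjectivity of $\mathfrak{d}_x\phi$ is exactly Proposition \ref{prop:Funct pullback}, and the inclusion $\im(\iota)\subseteq\ker(\mathfrak{d}_x\phi)$, where $\iota\colon\ker(d\phi)_x\to\phi^{-1}(\cF)_x$ denotes the left map, is immediate: any $Y\in\Gamma_c(\ker d\phi)$ satisfies $d\phi(Y)=0$, so computing $\mathfrak{d}_x\phi([Y]_x)$ from the representation $d\phi(Y)=\sum_i f_iX_i\circ\phi$ with all $f_i=0$ gives $0$. The plan therefore reduces to proving injectivity of $\iota$ and the reverse inclusion $\ker(\mathfrak{d}_x\phi)\subseteq\im(\iota)$.

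For injectivity I would factor through the anchor. A vector $v\in\ker(d\phi)_x$ is represented by a vertical field $Y$ with $Y(x)=v$, and $\iota(v)=[Y]_x$; applying $\natural_x\colon\phi^{-1}(\cF)_x\to T_xN$, $[Y]_x\mapsto Y(x)$, shows that $\natural_x\circ\iota$ is the tautological inclusion of the subbundle fibre $\ker(d\phi)_x\hookrightarrow T_xN$, which is injective. Hence $\iota$ is injective. The same observation, that a section of the vector bundle $\ker d\phi$ vanishing at $x$ lies in $I_x\Gamma_c(\ker d\phi)\subseteq I_x\phi^{-1}(\cF)$, is what makes $\iota$ well defined in the first place.

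The real content is $\ker(\mathfrak{d}_x\phi)\subseteq\im(\iota)$, and here I would introduce a horizontal lift. Since $\phi$ is a submersion, near $x$ there is a smooth complement $H$ to $\ker d\phi$ in $TN$, and $d\phi$ restricts to a fibrewise isomorphism $H\xrightarrow{\sim}\phi^*TM$; for $Z\in\cF$ let $\widetilde{Z\circ\phi}\in\Gamma(H)$ denote the lift of $Z\circ\phi$, truncated by a bump function equal to $1$ near $x$ so that it is compactly supported. Then $\widetilde{Z\circ\phi}\in\phi^{-1}(\cF)$ and $\mathfrak{d}_x\phi([\widetilde{Z\circ\phi}]_x)=[Z]_{\phi(x)}$. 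Given $X\in\phi^{-1}(\cF)$ with $\mathfrak{d}_x\phi([X]_x)=0$ and $d\phi(X)=\sum_i f_iX_i\circ\phi$, the field $X-\sum_i f_i\widetilde{X_i\circ\phi}$ is killed by $d\phi$, hence lies in $\Gamma_c(\ker d\phi)$; reducing modulo $I_x\phi^{-1}(\cF)$ then writes $[X]_x$ as the sum of an element of $\im(\iota)$ and the horizontal remainder $\sum_i f_i(x)[\widetilde{X_i\circ\phi}]_x$.

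It remains to absorb that remainder into $\im(\iota)$, which is where the hypothesis is finally used. The equation $\mathfrak{d}_x\phi([X]_x)=0$ says $\sum_i f_i(x)[X_i]_{\phi(x)}=0$ in $\cF_{\phi(x)}$, i.e. $\sum_i f_i(x)X_i=\sum_j g_jZ_j$ near $\phi(x)$ with $g_j(\phi(x))=0$ and $Z_j\in\cF$. Lifting, the fields $\sum_i f_i(x)\widetilde{X_i\circ\phi}$ and $\sum_j(g_j\circ\phi)\widetilde{Z_j\circ\phi}$ have the same image under $d\phi$, so their difference is vertical, while $\sum_j(g_j\circ\phi)\widetilde{Z_j\circ\phi}\in I_x\phi^{-1}(\cF)$ because each $g_j\circ\phi$ vanishes at $x$. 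Hence $\sum_i f_i(x)[\widetilde{X_i\circ\phi}]_x\in\im(\iota)$, and combining with the previous paragraph gives $[X]_x\in\im(\iota)$, completing exactness. The main obstacle is exactly this reverse inclusion: building the horizontal lift and tracking everything carefully through the localizations at $x$; the injectivity and the zero-composite inclusion are formal.
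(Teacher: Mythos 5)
Your proof is correct, but packaged differently from the paper's. The paper proves this proposition by a normal-form reduction: since the statement is local, it takes $N=M\times\R^k$ with $\phi$ the projection, observes that $\phi^{-1}(\cF)$ is then explicitly generated by the vertical fields $\partial_{t_i}$ together with the constant extensions $\bar{X}(y,t)=X(y)$ of $X\in\cF$, and leaves the exactness check as straightforward. Your argument is the coordinate-free version of that check: your horizontal complement $H$ and truncated lifts $\widetilde{Z\circ\phi}$ play exactly the role of the $\partial_{t_i}$-directions and the constant extensions in the product chart, and your decomposition $X=\bigl(X-\sum_i f_i\widetilde{X_i\circ\phi}\bigr)+\sum_i f_i\widetilde{X_i\circ\phi}$ is the invariant form of expressing an element of the pullback module in those generators. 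What your route buys is that the details the paper omits are actually supplied, without choosing a chart; in particular, proving injectivity of $\ker(d\phi)_x\to\phi^{-1}(\cF)_x$ by composing with the anchor $\natural_x$ is a clean observation the paper does not spell out. What the paper's route buys is brevity and an explicit description of $\phi^{-1}(\cF)$ in the local model, which is implicitly reused elsewhere (the lifts $\tilde{X}_i$ with $d\phi(\tilde{X}_i)=X_i\circ\phi$ in the proof of Theorem~\ref{thm:funct blowup} are the same device). One small point of hygiene in your write-up: since $\chi$ equals $1$ only near $x$, one has $d\phi\bigl(X-\sum_i f_i\widetilde{X_i\circ\phi}\bigr)=\sum_i f_i(1-\chi)\,X_i\circ\phi$, so this field is vertical only on the set where $\chi\equiv 1$; before placing it in $\Gamma_c(\ker d\phi)$ you should multiply by a second cutoff supported in $\{\chi=1\}$ and equal to $1$ near $x$, which changes nothing modulo $I_x\phi^{-1}(\cF)$ and leaves the rest of your argument intact.
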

\begin{proof}
The statement is local, therefore we can suppose that $N=M\times \R^k$, $\phi$ is the projection. If $X\in \cF$, then let $\bar{X}\in \Gamma(TN)$ be the constant extension defined by $\bar{X}(x,t)=X(x)$. It is clear that $$\phi^{-1}(\cF)=\{\sum_{i=1}^k f_i\partial_{t_i}+\sum_{j=1}^s g_j \bar{X}_j:s\in \N, f_i,g_i\in C^\infty_c(N),X_j\in \cF\}.$$ The proposition is then straightforward to check.
\end{proof}% $\mathfrak{h}_x^{\phi}$ denotes the space defined in Remark \ref{rem:Lie algebra hx} associated to $\phi^{-1}(\cF)$.
\begin{theorem}[Functoriality of the blowup]\label{thm:funct blowup}Let $x\in N$. The maps  \begin{align*}
\blup(\phi^{-1}(\cF))_x\xrightarrow{\blup(\phi)_x} \blup(\cF)_{\phi(x)},\quad V\mapsto \mathfrak{d}_x\phi(V).
\end{align*}and \begin{align*}
\blup(\phi^{-1}(\cF))\xrightarrow{\blup(\phi)} \blup(\cF)\times_{\pi_{\blup(\cF)},\phi}N,\quad (V,x)\mapsto (\mathfrak{d}_x\phi(V),\phi(x)).
\end{align*} are homeomorphisms.\end{theorem}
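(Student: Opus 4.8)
\emph{Plan and set-up.} Every assertion is local on $N$ and all maps are defined fibrewise, so I would first reduce to the model $N=M\times\R^c$ with $\phi$ the projection, exactly as in the proof of Proposition~\ref{prop:short exact seq fibers pullback}. Fix $x=(m,t_0)$, choose $X_1,\dots,X_k\in\cF$ with $[X_i]_m$ a basis of $\cF_m$ generating $\cF$ near $m$, and take $\partial_{t_1},\dots,\partial_{t_c},\bar X_1,\dots,\bar X_k$ as generators of $\phi^{-1}(\cF)$; by Proposition~\ref{prop:short exact seq fibers pullback} their classes form a basis of $\phi^{-1}(\cF)_x$. Write $\tau_{m'}\colon\cF_m\to\cF_{m'}$ and $\tau'_y\colon\phi^{-1}(\cF)_x\to\phi^{-1}(\cF)_y$ for the transition maps \eqref{dfn phi}, and set $P=\ker\mathfrak{d}_x\phi=\operatorname{span}\{[\partial_{t_i}]_x\}$, $W=\operatorname{span}\{[\bar X_j]_x\}$, so $\mathfrak{d}_x\phi|_W\colon W\to\cF_m$ is an isomorphism. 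A check on generators records the two compatibilities I will use: $\mathfrak{d}_y\phi\circ\tau'_y=\tau_{\phi(y)}\circ\mathfrak{d}_x\phi$, and $\tau'_y$ restricts to an isomorphism $P\xrightarrow{\sim}\ker\mathfrak{d}_y\phi$. From the short exact sequence, $\dim\phi^{-1}(\cF)_y=c+\dim\cF_{\phi(y)}$ and $\dim F_y=c+\dim F_{\phi(y)}$, so that $N_{\areg}=\phi^{-1}(M_{\areg})$ and $N_{\reg}=\phi^{-1}(M_{\reg})$.

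\emph{The fibre statement.} The heart of the proof is the identity, valid for every \emph{regular} $y$ near $x$,
\begin{equation}\label{eq:key}
\ker\tau'_y=(\mathfrak{d}_x\phi|_W)^{-1}\big(\ker\tau_{\phi(y)}\big)\subseteq W.
\end{equation}
The nontrivial inclusion is $\supseteq$, and this is where regularity of $y$ enters. If $w=\sum_j b_j[\bar X_j]_x\in W$ satisfies $\mathfrak{d}_x\phi(w)=\sum_j b_j[X_j]_m\in\ker\tau_{\phi(y)}$, then $\sum_j b_j[X_j]_{\phi(y)}=0$, so applying $\natural_{\phi(y)}$ gives $\sum_j b_j X_j(\phi(y))=0$; hence $\tau'_y(w)=\sum_j b_j[\bar X_j]_y$ is annihilated by $\natural_y$, and as $y\in N_{\reg}$ the anchor $\natural_y$ is injective, forcing $\tau'_y(w)=0$. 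The inclusion $\subseteq$ is a short diagram chase using the two compatibilities. Since $\mathfrak{h}_y=0$ for $y\in N_{\reg}$, the fibre $\blup(\phi^{-1}(\cF))_x$ is precisely the set of limits $\lim_n\ker\tau'_{y_n}$ over $y_n\in N_{\reg}$, $y_n\to x$; by \eqref{eq:key} each such kernel equals $(\mathfrak{d}_x\phi|_W)^{-1}(\ker\tau_{\phi(y_n)})$, which depends on $y_n$ only through $\phi(y_n)$. Transporting by the homeomorphism $\GR(\cF_m)\xrightarrow{\sim}\GR(W)$ induced by $\mathfrak{d}_x\phi|_W$, and using that every regular $m_n\to m$ lifts to $(m_n,t_0)\to x$, I would conclude $\blup(\phi^{-1}(\cF))_x=(\mathfrak{d}_x\phi|_W)^{-1}\big(\blup(\cF)_m\big)$, whence $\blup(\phi)_x=\mathfrak{d}_x\phi|_{\blup(\phi^{-1}(\cF))_x}$ is the restriction of a homeomorphism, hence a homeomorphism onto $\blup(\cF)_m$.

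\emph{The global statement.} The map $\blup(\phi)$ is then a fibrewise bijection over $N$. For continuity I would pass to the charts \eqref{eqn:top blowup}: the source chart sends $(V,y)$ to $(\tau'_y)^{-1}(V)$ and the target chart sends its image to $\tau_{\phi(y)}^{-1}(\mathfrak{d}_y\phi(V))$, and the compatibility upgrades to the subspace identity $\mathfrak{d}_x\phi\big((\tau'_y)^{-1}(V)\big)=\tau_{\phi(y)}^{-1}(\mathfrak{d}_y\phi(V))$. Applying the fibre statement at $y$ gives $V\cap\ker\mathfrak{d}_y\phi=0$, hence $(\tau'_y)^{-1}(V)\cap P=0$, so the source chart lands in the locus where $A\mapsto\mathfrak{d}_x\phi(A)$ is continuous; thus $\blup(\phi)$ is continuous. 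To obtain a homeomorphism I would then argue by properness rather than inverting by hand: the projection $\pi\colon\blup(\phi^{-1}(\cF))\to N$ is proper (its fibres are closed in the compact $\GR(\cF_x)$, and locally it is the restriction to a closed subset of $\GR\times N\to N$), while $\blup(\cF)\times_\phi N$ is locally compact Hausdorff; since $\pi=\mathrm{pr}_N\circ\blup(\phi)$ with $\mathrm{pr}_N$ separated, $\blup(\phi)$ is proper, hence closed, and a closed continuous bijection is a homeomorphism.

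\emph{Main obstacle.} The decisive step is \eqref{eq:key}: a priori the kernels $\ker\tau'_{y_n}$ could tilt into $P=\ker\mathfrak{d}_x\phi$ in the limit, which would wreck both well-definedness and injectivity of $V\mapsto\mathfrak{d}_x\phi(V)$. What prevents this is the anchor computation above—at a \emph{regular} point a horizontal combination lying in $\ker\tau_{\phi(y)}$ already vanishes pointwise, hence has zero class—so that $\ker\tau'_y\subseteq W$ \emph{before} passing to any limit. Once \eqref{eq:key} is in hand, the remaining topological matching is either formal (continuity via the charts) or dispatched by the soft properness argument.
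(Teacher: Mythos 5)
Your proposal is correct and follows essentially the same route as the paper: the paper's (very terse) proof consists precisely of computing both blowups with the adapted generators you use --- lifts $\tilde{X}_1,\dots,\tilde{X}_k$ of the $X_i$ together with vertical fields spanning $\ker(d\phi)$, which in your local model are exactly the $\bar{X}_j$ and $\partial_{t_i}$ --- after which the fibrewise correspondence of kernels at regular points is what the paper declares ``immediate''. Your key identity (the anchor computation at regular $y$ forcing $\ker\tau'_y\subseteq W$ and matching it with $\ker\tau_{\phi(y)}$) together with the chart-continuity and properness bookkeeping are precisely the details the paper compresses into its one-line conclusion, and you have supplied them correctly.
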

\begin{proof}
Let $k=\dim(\cF)$, $k'=\dim(\phi^{-1}(\cF))$, $X_1,\cdots,X_k\in \cF$ such that $[X_1]_x,\cdots,[X_k]_x$ form a basis of $\cF_x$. Let $\tilde{X}_1,\cdots,\tilde{X}_k\in \phi^{-1}(\cF)$ such that $d\phi(\tilde{X}_i))=X_i\circ \phi$, $\tilde{X}_{k+1},\cdots,\tilde{X}_{k'}\in \Gamma(\ker(d\phi))$ such that $\tilde{X}_{k+1}(x),\cdots,\tilde{X}_{k'}(x)$ form a basis of $\ker(d\phi)_x$. If one uses family $X_1,\cdots,X_k$ and $\tilde{X}_1,\cdots,\tilde{X}_{k'}$ to calculate $\blup(\cF)$ and $\blup(\phi^{-1}(\cF))$ the theorem follows immediately.
\end{proof}
\subsection{Holonomy groupoid and action on the blowup space}
In this section, we will recall the construction of the holonomy groupoid $\cH(\cF)$ in \cite{AS1}. We then show that it acts on $\blup(\cF)$.
\paragraph{Bi-submersions.} A bi-submersion for $\cF$ is a triple $(U,r_U,s_U)$ where $U$ is a smooth manifold, $s_U,r_U:U\to M$ are submersions such that \begin{equation}\label{eqn:bi-sub}
 s_U^{-1}(\cF)=r_U^{-1}(\cF)=\Gamma(\ker(ds_U))+\Gamma(\ker(dr_U)).
\end{equation}
If $x,y\in M$, then we denote $s_U^{-1}(x)$ by $U_x$, $r_U^{-1}(y)$ by $U^y$, $s_U^{-1}(x)\cap r_U^{-1}(y)$ by $U^y_x$. We will often simply write $U$ instead of $(U,r_U,s_U)$ for a bi-submersion.%\footnote{The spaces $U_x$ and $U^y$ are smooth manifolds because $r,s$ are submersions. The space $U_x^y$ is also a smooth manifold. This follows from seeing $r:U_x\to l_x$ as a submersion where $l_x\subseteq M$ is the leaf of $\cF$ which passes through $x$, see \cite[Proposition 1.14]{AS1}.}
\begin{exs}\label{exs:bisubmersions}\begin{enumerate}
\item If $\cF$ is as in Example \ref{ex:singular foliation group action}, then by \cite[Proposition 2.2]{AS1}, $(G\times M,r,s)$ is a bi-submersion where $r(g,x)=g\cdot x$, $s(g,x)=x$.
\item Let $x\in M$, $\cF$ an arbitrary singular foliation, $X_1,\cdots,X_k\in \cF$ such that $[X_1]_x,\cdots,[X_k]_x$ form a basis of $\cF_x$. We define \begin{align*}
r,s:M\times \R^k\to M,\quad s(y,t_1,\cdots,t_k)=y,\quad  r(y,t_1,\cdots,t_k)=\exp_{\sum_{i=1}^kt_iX_i}(y)
\end{align*}By \cite[Proposition 2.10]{AS1}, for some open neighbourhood $W$ of $(x,0)\in M\times \R^k$, the triple $(W,r_{|W},s_{|W})$ is a bi-submersion.
\end{enumerate} 
\end{exs}
If $(U,r_U,s_U)$ is a bisubmersion. Then its inverse $U^{-1}$ is the bi-submersion $(U,s_U,r_U)$. If $U,U'$ are bi-submersions, then their product $U\circ U'$ is the bi-submersion $(U\times_{s_U,r_{U'}}{U'},r_{U\circ {U'}},s_{U\circ {U'}})$, where $r_{U\circ {U'}}(u,u')=r_U(u)$ and $s_{U\circ {U'}}(u,u')=s_{U'}(u')$.
\paragraph{Bi-sections and path holonomy.} A bi-section of a bi-submersion $U$ at $u\in U$ is an embedded submanifold $S\subseteq U$ such that $u\in S$ and $$s_{U |S}:S\to s_U(S),\quad r_{U|S}:S\to r_U(S)$$ are diffeomorphisms onto open subsets of $M$. By \cite[Proposition 2.7]{AS1}, bi-sections always exists. It follows from the definition of a bi-submersion that $r_{U|S}\circ s_{U|S}^{-1}$ is an automorphism of $\cF$ (on its domain of definition), called the automorphism associated to $S$. \textbf{We will suppose that all bi-submersions are path holonomy}. This means that for any $u\in U$, there exists a bi-section $S$ such that $r_{U|S}\circ s_{U|S}^{-1}=\phi_{|s_U(S)}$ for some $\phi\in \exp(\cF)$, the group of diffeomorphisms of $M$ generated by $\exp_X$ for $X\in \cF$, see \cite[Proposition 1.6]{AS1}. We remark that this isn't a restrictive condition on $\cF$ because path holonomy bi-submersions always exist. The bi-submersions in Examples \ref{exs:bisubmersions} are path holonomy.

We say that a bi-submersion carries the identity at $u$ if there exists a bi-section at $u$ such that the associated automorphism is equal to the identity. We call the bi-submersion $U$ minimal at $x$ if $\dim(U)=\dim(M)+\dim(\cF_{x})$. The bi-submersion in Example \ref{exs:bisubmersions}.2 carries the identity and is minimal at $(x,0)$.
\paragraph{Morphisms of bi-submersions.} If $(U,r_U,s_U),(U',r_{U'},s_{U'})$ are bi-submersions, then a morphism of bi-submersions $f:(U,r_{U},s_{U})\to (U',r_{U'},s_{U'})$ is a smooth map $f:U\to U'$ such that $$s_{U'}\circ f=s_U,\quad r_{U'}\circ f=r_U.$$
A local morphism at $u\in U$ is a morphism which is defined in a neighbourhood of $u$.
\paragraph{Holonomy Groupoid.}Consider the disjoint union of all bi-submersions $\sqcup_{U} U=\{(U,u)\}$. Here $U$ is a bi-submersion\footnote{To be a set, one has to restrict to bi-submersions which are subsets of $\R^\infty$.}, $u\in U$. We equip this set with a relation $(U,u)\sim (U',u')$ if there exists a local morphism of bi-submersions $f:U\to U'$ such that $f(u)=u'$. By \cite[Corollary 2.11]{AS1}, this is an equivalence relation. The quotient of the set $\sqcup_{U} U$ by this relation is the holonomy groupoid and is denoted by $\cH(\cF)$. One equips $\cH(\cF)$ with the quotient of the disjoint union topology on $\sqcup_{U} U$.

If $U$ is a bi-submersion, we denote by $q_U:U\to \cH(\cF)$ the map which sends $u$ to the class of $(U,u)$. The groupoid structure on $\cH(\cF)$ is given by \begin{align*}
s_{\cH(\cF)}(q_U(u))=s_U(u),\quad r_{\cH(\cF)}(q_U(u))=r_U(u)\\
q_U(u)q_{U'}(u')=q_{U\circ U'}(u,u'),\quad q_U(u)^{-1}=q_{U^{-1}}(u).
\end{align*}
The identity at $x\in M$ is given by $q_U(u)$ where $U$ is any bi-submersion which carries the identity at $u$ with $s_U(u)=x$. 

%The following technical lemma shows the importance of minimal bi-submersions \begin{lem}[{\cite[Proposition 3.11]{AS1}}]\label{lem:sub morphism minimal}\begin{enumerate}
%\item If $f:U\to U'$ is a morphism of bi-submersions, $U'$ is minimal at $x$, then $f$ is a submersion at every $u\in U_{x}$.
%\item If $\gamma\in \cH(\cF)$, then there exists a bi-submersion $U$ minimal at $s_{\cH(\cF)}(\gamma)$ and there exists $u\in U$ such that $q_U(u)=\gamma$.
%\end{enumerate}
%\end{lem}We remark that by using automorphisms of bi-sections exist, one shows that for $\gamma\in \cH(\cF)$, $\dim(\cF_{s_{\cH(\cF)}(\gamma)})=\dim(\cF_{r_{\cH(\cF)}(\gamma)})$.
\begin{rems}\label{rem:open map}\begin{enumerate}
\item Even though the topology on $\cH(\cF)$ is quite pathological, it is shown in \cite{Debord2013} that for every $x\in M$, the map $q_{U|U_x}:U_x\to \cH_x$ is a local homeomorphism, where $U$ is a bi-submersion which is minimal at $x$. Hence $\cH(\cF)_x$ naturally carries a unique smooth manifold structure of dimension $\dim(\cF_x)$ such that the maps $q_{U|U_x}:U_{x}\to \cH(\cF)_x$ are smooth submersions when $U$ is a bi-submersion. Furthermore because we only consider path-holonomy bi-submersions, $\cH(\cF)_x$ is connected.\begin{theorem}[\cite{AndZambon1}]\label{thm:AZ}Let $x\in M$, $l\subseteq M$ the immersed leaf which passes though $x$. Then \begin{enumerate}
\item the connected component of $\cH(\cF)_x^x$ at $x$ is a simply connected Lie group whose Lie algebra is $\mathfrak{h}_x$.
\item the map $r:\cH(\cF)_x\to l$ is a $\cH(\cF)_x^x$ principal bundle.
\end{enumerate}
\end{theorem}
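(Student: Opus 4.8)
The plan is to treat the two assertions together, building the Lie group structure on the isotropy and then exhibiting the principal bundle. Throughout write $G=\cH(\cF)_x^x$ and recall from Remark~\ref{rem:open map} that $\cH(\cF)_x$ is a connected smooth manifold of dimension $\dim(\cF_x)$ on which each $q_{U|U_x}\colon U_x\to\cH(\cF)_x$ is a submersion. First I would show that $r\colon\cH(\cF)_x\to M$ takes values in the leaf $l$ and is a surjective submersion onto it: fixing a minimal path-holonomy bi-submersion $U$ at $x$ as in Example~\ref{exs:bisubmersions}.2, one has $r\circ q_U=r_U$, and the bi-submersion identity \eqref{eqn:bi-sub} forces $dr_U(\ker ds_U)$ to surject onto $F_{r_U(u)}=T_{r_U(u)}l$, so $r_U|_{U_x}$ is a submersion onto an open subset of $l$; surjectivity follows because the groupoid orbit through $x$ is exactly $l$. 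Consequently $G=r^{-1}(x)\cap\cH(\cF)_x$ is a closed embedded submanifold and, being a subgroup of the groupoid, the groupoid operations restrict to smooth maps on it (smoothness near the identity is read off from the product bi-submersion $U\circ U$ and then propagated by translation), so $G$ is a Lie group.

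Next I would identify the Lie algebra. Using the minimal bi-submersion, $q_U$ is a local diffeomorphism at the identity identifying $T_e\cH(\cF)_x$ with $\cF_x$, under which $d_er$ becomes $\natural_x$; hence $T_eG=\ker\natural_x=\mathfrak{h}_x$ as vector spaces. To match the bracket with the one of Remark~\ref{rem:Lie algebra hx}, I would represent tangent vectors $[X]_x,[Y]_x\in\mathfrak{h}_x$ by the one-parameter families of holonomies associated to the flows of $X,Y\in\cF$ and compute the group commutator to second order; the classical identity relating the commutator of flows to the Lie bracket of vector fields then yields $[[X]_x,[Y]_x]=[[X,Y]]_x$. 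This computation is routine but is the place where one must be careful that the bi-section automorphisms compose correctly modulo $I_x\cF$.

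For the simple connectedness I would integrate $\mathfrak{h}_x$ to the simply connected Lie group $\tilde{H}_x$ and build the natural homomorphism $\rho\colon\tilde{H}_x\to G_0$ onto the identity component $G_0$ of $G$, induced by the isomorphism on Lie algebras; since both groups are connected with Lie algebra $\mathfrak{h}_x$, the map $\rho$ is a covering homomorphism, so it suffices to prove $\ker\rho$ is trivial. Here the path-holonomy hypothesis is essential: an element of $\ker\rho$ is represented by a product of exponential bi-sections whose composite automorphism is the identity and which returns to $x$, and the minimality of the path-holonomy atlas means exactly that such a loop is already identified with the constant path by a morphism of bi-submersions, forcing it to be trivial in $\tilde{H}_x$. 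I expect this kernel-triviality to be the main obstacle, since it is where the specific minimal nature of the path-holonomy construction—rather than any soft groupoid generality—has to be used.

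Finally, for the principal bundle I would let $G$ act on $\cH(\cF)_x$ on the right by $\gamma\cdot h=\gamma h$, which is well defined because $s(\gamma)=x=r(h)$ and preserves the fibers of $r$ since $r(\gamma h)=r(\gamma)$. Groupoid cancellation makes the action free, and if $r(\gamma_1)=r(\gamma_2)$ then $\gamma_2^{-1}\gamma_1\in G$ with $\gamma_2(\gamma_2^{-1}\gamma_1)=\gamma_1$, so the orbits are exactly the fibers of $r$. Because $r$ is a submersion it admits local smooth sections $\sigma\colon V\to\cH(\cF)_x$ over opens $V\subseteq l$, and $(y,h)\mapsto\sigma(y)h$ then gives an explicit local trivialization $V\times G\xrightarrow{\sim}r^{-1}(V)$; this simultaneously yields local triviality (hence properness) and identifies $r\colon\cH(\cF)_x\to l$ with a principal $G$-bundle, completing the second assertion.
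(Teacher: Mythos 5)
The paper does not actually prove Theorem \ref{thm:AZ}: it is imported from \cite{AndZambon1}, so there is no internal argument to compare with and your proposal must stand on its own. Most of your scaffolding is sound and is the standard route: $r$ restricted to $\cH(\cF)_x$ is a submersion onto the leaf, $G=\cH(\cF)_x^x=r^{-1}(x)$ is a closed embedded Lie subgroup with $T_eG=\mathfrak{h}_x$, the right $G$-action on $\cH(\cF)_x$ is free with orbits exactly the $r$-fibres, and local sections of the submersion $r$ give the trivializations; this disposes of part 2 (modulo the bracket identification, which you declare ``routine'' but which genuinely needs an argument of the type of Lemma \ref{lem:temp 1234}, i.e.\ compatibility of bi-section composition with classes modulo $I_x\cF$). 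The genuine gap is the step you yourself flag, and your justification for it is circular. With $\rho\colon\tilde H_x\to G_0$ the covering you construct, an element of $\ker\rho$ is a homotopy class of loops in $G_0$ based at the unit, i.e.\ $\ker\rho\cong\pi_1(G_0)$. What your phrase ``such a loop is already identified with the constant path by a morphism of bi-submersions'' establishes is that the endpoint of the loop is the unit of $\cH(\cF)_x^x$ --- which is precisely the hypothesis that the class lies in $\ker\rho$, not the conclusion that it vanishes in $\tilde H_x$. Morphisms of bi-submersions detect equality of groupoid elements; to kill $\pi_1(G_0)$ you would have to contract a loop of holonomies, and no formal appeal to the minimality of the path-holonomy atlas produces such a homotopy.

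In fact this step cannot be repaired, because simple connectedness as transcribed fails in this generality, by the paper's own conventions. Take $\cF$ generated by the rotation field $X=-y\partial_x+x\partial_y$ on $\R^2$ and $x=0$, so that $\mathfrak{h}_0\cong\R$. In the $N$-fold product $W^{\circ N}$ of the minimal bi-submersion of Example \ref{exs:bisubmersions}.2, the point representing the composition of $N$ time-$(2\pi/N)$ flows of $X$ admits the bi-section $\{((\exp_X^{(N-1)2\pi/N}(y),\tfrac{2\pi}{N}),\dots,(y,\tfrac{2\pi}{N})):y \text{ near } 0\}$, whose carried diffeomorphism is the time-$2\pi$ flow of $X$, i.e.\ the identity of $\R^2$; by the characterization of units used in this paper (a point at which the bi-submersion carries the identity is a unit), this full turn equals $1_0$. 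Hence the identity component of $\cH(\cF)_0^0$ is $\R/2\pi\Z$, a non-simply-connected integration of $\mathfrak{h}_0$. The paper's own introductory example behaves the same way: the isotropy of $SL_n(\R)$ acting on $\R^n$ at the origin is $SL_n(\R)$ itself, not its universal cover. What \cite{AndZambon1} proves (combined with the longitudinal smoothness of \cite{Debord2013}) is that the identity component of $\cH(\cF)_x^x$ is the quotient of the simply connected Lie group integrating $\mathfrak{h}_x$ by a \emph{discrete} essential-isotropy subgroup; your covering $\rho$ together with the discreteness of $\ker\rho$ proves exactly this weaker --- and correct --- statement, but triviality of $\ker\rho$ is simply not available, with or without path holonomy, so you should treat the words ``simply connected'' in the quoted statement as requiring an extra hypothesis rather than as something to be derived.
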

\item By \cite[Corollary 3.12]{AS1}, the map $q_{U}$ is an open map for any bi-submersion $U$.
\end{enumerate}
\end{rems}
\paragraph{Action of $\cH(\cF)$ on $\blup(\cF)$.}Let $G$ be a topological groupoid. Recall that an action of $G$ on a topological space $X$ consists of continuous maps $\pi: X \to G_0$ and $m: G\times_{s_G,\pi}X\to X$, written $(\gamma, x)\to  \gamma x$, such that $\pi(x)x = x$, $\pi(\gamma x) = r(\gamma)$, and $(\gamma'\gamma)x = \gamma'(\gamma x)$ for all
$x\in X, \gamma\in  G_{\pi(x)}, \gamma' \in G_{r(\gamma)}$.

Let $(U,r_U,s_U)$ be a bi-submersion, $u\in U$. Since $s^{-1}_U(\cF)=r_U^{-1}(\cF)$, by Theorem \ref{thm:funct blowup}, the map $$\blup(\cF)_{s_U(u)}\xrightarrow{\blup(s_U)_u^{-1}}\blup(s_U^{-1}(\cF))_u=\blup(r_U^{-1}(\cF))_u\xrightarrow{\blup(r_U)_u}\blup(\cF)_{r_U(u)}$$ is a homeomorphism. 
\begin{theorem}\label{thm:action} The groupoid $\cH(\cF)$ acts on $\blup(\cF)$ by the formula $$q_U(u)\cdot (V,s_U(u))=(\blup(r_U)_{u}\circ\blup(s_U)_u^{-1}(V),r_U(u)),$$ where $U$ is a bi-submersion, $u\in U$, $V\in \blup(\cF)_{s_U(u)}$.
\end{theorem}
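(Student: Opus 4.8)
The plan is to show that the stated formula defines a well-defined, continuous action satisfying the three axioms, with well-definedness being the crux. For a fixed bi-submersion $U$ and $u\in U$, write $\Phi_{U,u}:=\blup(r_U)_u\circ\blup(s_U)_u^{-1}$; since $s_U^{-1}(\cF)=r_U^{-1}(\cF)$ and both $\blup(s_U)_u,\blup(r_U)_u$ are homeomorphisms by Theorem \ref{thm:funct blowup}, each $\Phi_{U,u}\colon\blup(\cF)_{s_U(u)}\to\blup(\cF)_{r_U(u)}$ is a homeomorphism. The anchor is $\pi_{\blup(\cF)}$, which is continuous, and the compatibility $\pi_{\blup(\cF)}(q_U(u)\cdot(V,s_U(u)))=r_U(u)=r_{\cH(\cF)}(q_U(u))$ is built into the formula. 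It remains to prove (a) that $\Phi_{U,u}$ depends only on $q_U(u)\in\cH(\cF)$, (b) multiplicativity, (c) that the identity acts trivially, and (d) continuity of the global action map.

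The engine for everything is the compositional naturality of $\blup$. First I would record the chain rule $\mathfrak{d}_u(\psi\circ\phi)=\mathfrak{d}_{\phi(u)}\psi\circ\mathfrak{d}_u\phi$ for composable submersions $\phi,\psi$, a direct consequence of Proposition \ref{prop:Funct pullback} together with $(\psi\circ\phi)^{-1}(\cF)=\phi^{-1}(\psi^{-1}(\cF))$; this upgrades Theorem \ref{thm:funct blowup} to $\blup(\psi\circ\phi)_u=\blup(\psi)_{\phi(u)}\circ\blup(\phi)_u$. From this I obtain the key invariance: if $g\colon U\to U_0$ is a submersion morphism of bi-submersions with $g(u)=u_0$, then $s_U=s_{U_0}\circ g$ and $r_U=r_{U_0}\circ g$ give $\blup(s_U)_u=\blup(s_{U_0})_{u_0}\circ\blup(g)_u$ and $\blup(r_U)_u=\blup(r_{U_0})_{u_0}\circ\blup(g)_u$, where the two $\blup(g)_u$ are literally the same map because the bi-submersion condition forces $s_U^{-1}(\cF)=r_U^{-1}(\cF)$ and $s_{U_0}^{-1}(\cF)=r_{U_0}^{-1}(\cF)$; hence the $\blup(g)_u$ factors cancel and $\Phi_{U,u}=\Phi_{U_0,u_0}$. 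For well-definedness (a) I then invoke the structure theory of bi-submersions from \cite{AS1}: if $q_U(u)=q_{U'}(u')$ there is a minimal bi-submersion $(U_0,u_0)$ together with submersion morphisms $U\to U_0\leftarrow U'$ carrying $u,u'$ to $u_0$, whence $\Phi_{U,u}=\Phi_{U_0,u_0}=\Phi_{U',u'}$. I write $\Phi_\gamma$ for the resulting map attached to $\gamma\in\cH(\cF)$.

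For multiplicativity (b), given composable $\gamma=q_U(u)$ and $\gamma'=q_{U'}(u')$ with $s_{U'}(u')=r_U(u)$, I represent $\gamma'\gamma$ by the product bi-submersion $W=U'\circ U$ at $w=(u',u)$. Here $s_W=s_U\circ\mathrm{pr}_2$ and $r_W=r_{U'}\circ\mathrm{pr}_1$ with $\mathrm{pr}_1,\mathrm{pr}_2$ submersions, and the defining identity $r_U\circ\mathrm{pr}_2=s_{U'}\circ\mathrm{pr}_1$ of the fibre product together with compositional naturality yields $\Phi_{W,w}=\Phi_{U',u'}\circ\Phi_{U,u}$, i.e. $\Phi_{\gamma'\gamma}=\Phi_{\gamma'}\circ\Phi_\gamma$. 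The identity axiom (c) is then free: since $\mathrm{id}_x\cdot\mathrm{id}_x=\mathrm{id}_x$ in $\cH(\cF)$, multiplicativity gives $\Phi_{\mathrm{id}_x}=\Phi_{\mathrm{id}_x}\circ\Phi_{\mathrm{id}_x}$, and as $\Phi_{\mathrm{id}_x}$ is a homeomorphism it must be the identity of $\blup(\cF)_x$.

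Finally, for continuity (d), I would show that for each bi-submersion $U$ the map $(u,(V,x))\mapsto\Phi_{U,u}(V)$ on $U\times_{s_U,\pi_{\blup(\cF)}}\blup(\cF)$ is continuous, using the local embeddings of $\blup$ into Grassmannians from \eqref{eqn:top blowup}: in these charts $\blup(s_U)_u$ and $\blup(r_U)_u$ are given by the maps $\phi_\bullet^{-1}$ depending continuously on the point, and $\blup(s_U)_u^{-1}$ is continuous because $\blup(s_U)$ is a fibrewise homeomorphism. The global action map then descends along $q_U\times\mathrm{id}$, which is open (being a base change of the open map $q_U$ of Remark \ref{rem:open map}) and surjective onto the piece of $\cH(\cF)\times_{s,\pi}\blup(\cF)$ over $q_U(U)$; as the bi-submersions cover $\cH(\cF)$ this gives continuity of $m$. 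The main obstacle is step (a): reducing the quotient relation defining $\cH(\cF)$ to a zig-zag of submersion morphisms through a minimal bi-submersion, since a general morphism of bi-submersions need not be a submersion while $\mathfrak{d}$, and hence $\blup$, is functorial only for submersions.
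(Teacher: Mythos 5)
Your proof is correct, and at its crux it follows the same strategy as the paper's: both reduce well-definedness to functoriality of $\blup$ along submersions (Theorem \ref{thm:funct blowup}) and then dispose of general local morphisms of bi-submersions---which, as you rightly stress, need not be submersions---by a zig-zag of submersion morphisms into a common bi-submersion. The only genuine divergence is how that zig-zag is produced. The paper starts from a local morphism $f:U\to U'$ with $f(u)=u'$ (this is literally the relation defining $\cH(\cF)$) and quotes \cite[Corollary 3.12]{AS1}, which supplies a bi-submersion $U''$ and a morphism $g:U'\to U''$ such that $g$ and $g\circ f$ are submersions at $f(u)$ and $u$; no minimality is required. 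You instead route both $(U,u)$ and $(U',u')$ through a minimal bi-submersion $(U_0,u_0)$. This variant is also sound, but it rests on two facts you only invoke as ``structure theory'': that every class $q_U(u)$ is represented on a minimal bi-submersion via a local morphism from $(U,u)$, and that any morphism of bi-submersions into a minimal one is automatically a submersion (this is in \cite{Debord2013}, which the paper uses elsewhere, cf.\ Remark \ref{rem:open map}.1); these are precisely the content of the ``main obstacle'' you flag at the end, and the paper's appeal to Corollary 3.12 is the cheaper way through it. On the other side of the ledger, you prove strictly more than the paper writes down: the multiplicativity computation via the product bi-submersion and the chain rule for $\mathfrak{d}$ (using $r_U\circ\mathrm{pr}_2=s_{U'}\circ\mathrm{pr}_1$ on the fibre product, with the projections submersions), the identity axiom extracted from idempotency plus invertibility of $\Phi_{\mathrm{id}_x}$, and the continuity descent along $q_U\times\mathrm{id}$, which is legitimate because an open continuous surjection is a quotient map and its restriction to the fibre product over $q_U(U)$ remains open and surjective; all of this corresponds to the steps the paper compresses into ``We leave checking the algebraic identities to the reader. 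Continuity of the action follows from Theorem \ref{thm:funct blowup}.''
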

If $\gamma\in \cH(\cF)$, $V\in \blup(\cF)_{s_{\cH(\cF)}(\gamma)}$, then we will write $\gamma\cdot V\in \blup(\cF)_{r_{\cH(\cF)}(\gamma)} $ for the action.
\begin{proof}
Let $U,U'$ be bi-submersions, $u\in U$, $f:U\to U'$ a local morphism, $V\in \blup(\cF)_{s_U(u)}$. We claim that $$\blup(r_U)_{u}\circ\blup(s_U)_u^{-1}(V)=\blup(r_{U'})_{f(u)}\circ\blup(s_{U'})_{f(u)}^{-1}(V).$$ This is obvious if $f$ is a submersion at $u$. By \cite[Corollary 3.12]{AS1}, there exists a bi-submersion $U^{\prime \prime}$ and a local morphism $g:U'\to U^{\prime \prime}$ at $f(u)$ such that $g$ and $g\circ f$ are submersions at $f(u)$ and $u$ respectively. The claim follows. This shows that the action is well defined. We leave checking the algebraic identities to the reader. Continuity of the action follows from Theorem \ref{thm:funct blowup}.
\end{proof}
It follows that $\cH(\cF)\ltimes \blup(\cF)$ is a well defined groupoid, see \cite[Section 2.3]{DebordSkandLiegroup}. We will use the notation $$\cH(\cF)\ltimes \blup(\cF)=\cH(\cF)\times_{{s_{\cH(\cF)}},\pi_{\blup(\cF)}}\blup(\cF)=\{(\gamma,V):\gamma\in\cH(\cF),V\in \blup(\cF)_{s_{\cH(\cF)}(\gamma)}\}.$$
\subsection{Blowup groupoid}\label{sec:smoothnes foliations}
In this section, we define the holonomy blowup groupoid.
\paragraph{Integration of $\blup(\cF)$.}Let $(U,r_U,s_U)$ be a bi-submersion, $u\in U$. Since $\ker(ds_U)_u\subseteq s^{-1}_U(\cF)_u$, $\ker(dr_U)_u\subseteq r_U^{-1}(\cF)_u$ and $s_U^{-1}(\cF)=r_U^{-1}(\cF)$, it follows that one has two natural inclusions given by \begin{align*}
&\ker(dr)_u\cap \ker(ds)_u\hookrightarrow  \ker(dr)_u\hookrightarrow r^{-1}_U(\cF)_u\\&\ker(dr)_u\cap \ker(ds)_u\hookrightarrow  \ker(ds)_u\hookrightarrow s^{-1}_U(\cF)_u.
\end{align*}
It is important in what follows to remark that the two inclusions \textit{aren't equal} in general. We accordingly define two maps $$\mathfrak{d}'_us_U:\ker(dr)_u\cap \ker(ds)_u\to \cF_{s_U(u)},\quad \mathfrak{d}'_ur_U:\ker(dr)_u\cap \ker(ds)_u\to \cF_{r_U(u)}$$ by the composition respectively\begin{align*}
&\ker(dr)_u\cap \ker(ds)_u\hookrightarrow \ker(dr)_u\hookrightarrow r^{-1}_U(\cF)_u=s^{-1}_U(\cF)_u\xrightarrow{\mathfrak{d}s_U} \cF_{s_U(u)}\\&\ker(dr)_u\cap \ker(ds)_u\hookrightarrow \ker(ds)_u\hookrightarrow s^{-1}_U(\cF)_u=r^{-1}_U(\cF)_u\xrightarrow{\mathfrak{d}r_U} \cF_{r_U(u)}.
\end{align*}
\begin{prop}\label{prop:image hx} The image of $\mathfrak{d}'_us_U$ is equal to $\mathfrak{h}_{s_U(u)}$. The kernel is equal to the set of $v\in \ker(dr)_u\cap \ker(ds)_u$ such that there exists $X\in \Gamma(\ker(ds_U))\cap \Gamma(\ker(dr_U))$ such that $X(u)=v$. Similar statement for $\mathfrak{d}'_ur_U$ 
\end{prop}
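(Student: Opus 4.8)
The plan is to reduce everything to the two short exact sequences of Proposition \ref{prop:short exact seq fibers pullback} applied to the submersions $s_U$ and $r_U$ at the point $u$, together with the compatibility $\natural_{\phi(u)}\circ\mathfrak{d}_u\phi = d_u\phi\circ\natural_u$ of Proposition \ref{prop:Funct pullback}. Write $x=s_U(u)$, $y=r_U(u)$ and $E_u := s_U^{-1}(\cF)_u = r_U^{-1}(\cF)_u$. Denote by $\iota_s:\ker(ds)_u\hookrightarrow E_u$ and $\iota_r:\ker(dr)_u\hookrightarrow E_u$ the injections from the two sequences; by exactness their images are exactly $\ker(\mathfrak{d}_u s_U)$ and $\ker(\mathfrak{d}_u r_U)$. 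Concretely $\iota_r(v)=[B]_u$ for any local section $B$ of $\ker(dr_U)$ with $B(u)=v$, and by construction $\mathfrak{d}'_u s_U = \mathfrak{d}_u s_U\circ\iota_r$ on $K:=\ker(dr)_u\cap\ker(ds)_u$. I will use throughout the elementary fact that any element of $I_u s_U^{-1}(\cF)$ vanishes at $u$.

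For the image I would argue as follows. Given $v\in K$, choose $B\in\Gamma(\ker dr_U)$ with $B(u)=v$, so that $\natural_u(\iota_r(v))=\natural_u([B]_u)=B(u)=v$. Proposition \ref{prop:Funct pullback} then gives
$$\natural_x(\mathfrak{d}'_u s_U(v)) = \natural_x(\mathfrak{d}_u s_U(\iota_r(v))) = d_u s_U(\natural_u(\iota_r(v))) = d_u s_U(v)=0,$$
since $v\in\ker(ds)_u$; hence $\mathfrak{d}'_u s_U(v)\in\mathfrak{h}_x$. Conversely, given $h\in\mathfrak{h}_x$, surjectivity of $\mathfrak{d}_u s_U$ lets me write $h=\mathfrak{d}_u s_U([Y]_u)$, and the bi-submersion identity \eqref{eqn:bi-sub} lets me decompose $Y=A+B$ with $A\in\Gamma(\ker ds_U)$ and $B\in\Gamma(\ker dr_U)$. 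Since $[A]_u\in\im(\iota_s)=\ker(\mathfrak{d}_u s_U)$ this discards $A$, leaving $\mathfrak{d}_u s_U([B]_u)=h$; applying Proposition \ref{prop:Funct pullback} to $\natural_x(h)=0$ forces $d_u s_U(B(u))=0$, so $B(u)\in K$ and $\mathfrak{d}'_u s_U(B(u))=h$. This gives $\im(\mathfrak{d}'_u s_U)=\mathfrak{h}_x$.

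For the kernel, one inclusion is immediate: if $X\in\Gamma(\ker ds_U)\cap\Gamma(\ker dr_U)$ with $X(u)=v$, then $\iota_r(v)=[X]_u$ and $X\in\Gamma(\ker ds_U)$ forces $[X]_u\in\ker(\mathfrak{d}_u s_U)$, whence $\mathfrak{d}'_u s_U(v)=0$. The reverse inclusion is where the real work lies. Suppose $v\in K$ with $\mathfrak{d}'_u s_U(v)=0$; then $\iota_r(v)=[B]_u\in\ker(\mathfrak{d}_u s_U)=\im(\iota_s)$, so there is $A\in\Gamma(\ker ds_U)$ with $[B]_u=[A]_u$ in $E_u$, i.e.\ $B-A\in I_u s_U^{-1}(\cF)$ (and $A(u)=B(u)=v$, as elements of $I_u s_U^{-1}(\cF)$ vanish at $u$). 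Writing $B-A=\sum_i f_i W_i$ with $f_i(u)=0$ and decomposing each $W_i=A_i+B_i$ via \eqref{eqn:bi-sub}, I rearrange
$$X := B-\sum_i f_i B_i = A+\sum_i f_i A_i.$$
The first expression shows $X\in\Gamma(\ker dr_U)$ and the second shows $X\in\Gamma(\ker ds_U)$, while $X(u)=B(u)=v$ because each $f_i(u)=0$. This exhibits the required $X\in\Gamma(\ker ds_U)\cap\Gamma(\ker dr_U)$, proving the kernel is as claimed.

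The main obstacle is precisely this last rearrangement: membership of $v$ in the fibrewise intersection $\ker(dr)_u\cap\ker(ds)_u$ is far weaker than being the value of a vector field tangent to both foliations, since $\ker ds_U\cap\ker dr_U$ need not be a subbundle; the trick that overcomes it is to split the ``error'' $B-A$ using the bi-submersion decomposition and absorb the two halves into $B$ and $A$ separately. The statement for $\mathfrak{d}'_u r_U$ follows by interchanging the roles of $s_U$ and $r_U$; note that the kernel description is already symmetric in $s_U$ and $r_U$, so only the image changes, to $\mathfrak{h}_{r_U(u)}$.
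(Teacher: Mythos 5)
Your proof is correct and takes essentially the same route as the paper's: the image is computed from the surjectivity of $\mathfrak{d}_us_{U|\ker(dr_U)}$ together with the compatibility \eqref{eqn:proof math frak d}, and the kernel via Proposition \ref{prop:short exact seq fibers pullback} followed by the decomposition $W_i=A_i+B_i$ from \eqref{eqn:bi-sub} and exactly the rearrangement $B-\sum_i f_iB_i=A+\sum_i f_iA_i$ that appears in the paper. You merely make explicit some steps the paper leaves implicit (well-definedness of $\iota_r$ from the vector bundle structure of $\ker(dr_U)$, that elements of $I_us_U^{-1}(\cF)$ vanish at $u$, and the easy inclusion for the kernel).
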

\begin{proof}
The map $\mathfrak{d}_us_{U|\ker(dr_U)}:\ker(dr_U)_u\to \cF_{s_U(u)}$ is surjective because $s_U^{-1}(\cF)=\ker(ds_U)+\ker(dr_U)$. By Eqn. \eqref{eqn:proof math frak d}, the image inverse of $\mathfrak{h}_{s_U(u)}$ is equal to $\ker(dr)_u\cap \ker(ds)_u$. To compute the kernel, let $v\in \ker(dr)_u\cap \ker(ds)_u$ which is in kernel of $\mathfrak{d}'_us_U$. By Proposition \ref{prop:short exact seq fibers pullback}, this means that one can find $X\in \Gamma(\ker(ds_U))$ and $Y\in \Gamma(\ker(dr_U))$ such that $X(u)=Y(u)=v$ and $[X]_u=[Y]_u\in s^{-1}_U(\cF)_{s_U(u)}$. Hence $X-Y=\sum f_i Z_i$ with $Z_i\in s^{-1}_U(\cF) $ and $f_i\in C^\infty(U,\R)$ vanish at $u$. Since $s_U^{-1}(\cF)=\ker(ds_U)+\ker(dr_U)$, it follows that each $Z_i=A_i+B_i$ with $A_i\in \ker(ds_U),B_i\in \ker(dr_U)$. It follows that $X-\sum f_iA_i=Y+\sum f_i B_i\in \Gamma(\ker(ds_U))\cap \Gamma(\ker(dr_U))$ and it is equal to $v$ at $u$.
\end{proof}
We remark that a straightforward computation shows that $u\cdot V=\mathfrak{d}'_ur_U(\mathfrak{d}'_us_U^{-1}(V))$.
 \begin{prop}\label{prop:foliation} Let $(V,x)\in \blup(\cF)$. Then
 \begin{enumerate}
 \item The bundle $(\mathfrak{d}'_us_U^{-1}(V))_{u\in U_x}$ is a vector subbundle of $TU_x$ which will be denoted by $\mathfrak{d}'s_U^{-1}(V)$.
 \item The bundle $\mathfrak{d}'s_U^{-1}(V)$ is a regular foliation on $U_x$.
\end{enumerate} 
 \end{prop}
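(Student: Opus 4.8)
The plan is to prove both statements locally over $U_x=s_U^{-1}(x)$, working in coordinates in which $s_U$ is a projection, and to realise $\mathfrak{d}'s_U^{-1}(V)$ as the kernel of a constant‑rank bundle morphism into a trivial bundle of Lie algebras that behaves like a Maurer–Cartan form. Throughout I fix local generators $X_1,\dots,X_k\in\cF$ near $x$ with $[X_1]_x,\dots,[X_k]_x$ a basis of $\cF_x$, so that the structure functions of Equation~\eqref{eqn:6 isom} are available.

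For part~1, I would first set $E:=(\ker ds_U\cap\ker dr_U)|_{U_x}$, which equals $\ker d(r_U|_{U_x})$ since $T_uU_x=\ker(d_us_U)$. Writing $r_U|_{U_x}=r_{\cH(\cF)}\circ q_U|_{U_x}$, Remarks~\ref{rem:open map} give that $q_U|_{U_x}\colon U_x\to\cH(\cF)_x$ is a submersion and Theorem~\ref{thm:AZ} gives that $r\colon\cH(\cF)_x\to l$ is a submersion onto the leaf $l$ through $x$; hence $r_U|_{U_x}$ has constant rank and $E$ is a smooth subbundle of $TU_x$. (Intrinsically, $\operatorname{im}d(r_U|_{U_x})_u=dr_u(\ker ds_u)=F_{r_U(u)}$ by \eqref{eqn:proof math frak d}, of dimension constant along $l$ by Proposition~\ref{prop:AS}.) I then view $\mathfrak{d}'s_U$ as a bundle map $E\to\underline{\cF_x}$ to the trivial bundle; its smoothness follows by extending a local frame of $E$ to sections $\tilde W_j\in\Gamma(\ker dr_U)\subseteq s_U^{-1}(\cF)$, writing $ds_U(\tilde W_j)=\sum_i f_{ij}\,(X_i\circ s_U)$, and noting $\mathfrak{d}'s_U(W_j(u))=\sum_i f_{ij}(u)[X_i]_x$ with $f_{ij}$ smooth. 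By Proposition~\ref{prop:image hx} this map is fibrewise onto $\mathfrak{h}_x$, so the composite $E\xrightarrow{\mathfrak{d}'s_U}\underline{\mathfrak{h}_x}\twoheadrightarrow\underline{\mathfrak{h}_x/V}$ is a constant‑rank surjection, and $\mathfrak{d}'s_U^{-1}(V)$, its kernel, is a smooth subbundle of $E\subseteq TU_x$.

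For part~2, since a constant‑rank distribution is a regular foliation as soon as it is involutive (Frobenius), it remains to prove $D:=\mathfrak{d}'s_U^{-1}(V)$ is involutive. First, $E$ is involutive because $\ker ds_U$ and $\ker dr_U$ are (vertical bundles of submersions) and their common sections are closed under bracket, so $[W,W']\in\Gamma(E)$ and $\mathfrak{d}'s_U([W,W'])$ makes sense for $W,W'\in\Gamma(E)$. The heart of the argument is the bracket identity
\[
\mathfrak{d}'s_U([W,W'])=[\mathfrak{d}'s_U(W),\mathfrak{d}'s_U(W')]_{\mathfrak{h}_x}+W(\mathfrak{d}'s_U(W'))-W'(\mathfrak{d}'s_U(W)),\qquad W,W'\in\Gamma(E),
\]
where $[\cdot,\cdot]_{\mathfrak{h}_x}$ is the bracket of Remark~\ref{rem:Lie algebra hx} and $W(\cdot)$ denotes the derivative along $W$ of the $\cF_x$‑valued function on $U_x$. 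I expect this identity — essentially that $\mathfrak{d}'s_U$ is a morphism of Lie algebroids — to be the main obstacle. I would prove it in coordinates $U\cong M\times\R^p$ with $s_U$ the projection, extending $W,W'$ to $\tilde W,\tilde W'\in\Gamma(\ker dr_U)$ with horizontal parts $\sum_i a_iX_i,\ \sum_i b_iX_i$, computing the $TM$‑component of $[\tilde W,\tilde W']$, and using \eqref{eqn:6 isom} with Remark~\ref{rem:Lie algebra hx}. This produces the stated terms plus a horizontal remainder $\sum_i a_i(X_ib_l)-\sum_j b_j(X_ja_l)$; the crucial cancellation is that it vanishes on $U_x$, because for $u\in U_x$ the horizontal vector $\sum_i a_i(u)X_i(x)=\natural_x(\mathfrak{d}'s_U(W(u)))$ is zero (as $\mathfrak{d}'s_U(W(u))\in\mathfrak{h}_x=\ker\natural_x$), whence $\sum_i a_i(u)(X_ib_l)(u)=0$, and symmetrically. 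Since $\tilde W,\tilde W'$ are tangent to $U_x$ along $U_x$, one has $[\tilde W,\tilde W']|_{U_x}=[W,W']$, giving the identity; keeping track of which derivatives are vertical versus horizontal is the delicate point.

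Finally, for $W,W'\in\Gamma(D)$ the functions $\mathfrak{d}'s_U(W),\mathfrak{d}'s_U(W')$ are $V$‑valued: the first term on the right lies in $V$ because $V$ is a Lie subalgebra of $\mathfrak{h}_x$ (Proposition~\ref{prop:gamma foliation}), and the last two lie in $V$ because the derivative along $U_x$ of a function valued in the fixed subspace $V\subseteq\cF_x$ stays in $V$. Hence $\mathfrak{d}'s_U([W,W'])\in V$, i.e.\ $[W,W']\in\Gamma(D)$, so $D$ is involutive. Together with the constant rank from part~1 and Frobenius, this shows $\mathfrak{d}'s_U^{-1}(V)$ is a regular foliation on $U_x$.
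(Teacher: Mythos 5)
Your proof is correct and takes essentially the same route as the paper: smoothness in part~1 comes from expressing a frame of $\ker(dr_U)$ through the local generators $X_1,\dots,X_k$, and part~2 rests on exactly the paper's bracket identity (Lemma~\ref{lem:temp 1234}, with the same signs) combined with $V$ being a Lie subalgebra and the fact that derivatives of $V$-valued functions along $U_x$ stay in $V$. The only cosmetic differences are that the paper obtains the subbundle directly as the preimage of $V$ under the fibrewise surjections $\ker(dr_U)_u\to\cF_x$, so it never needs Theorem~\ref{thm:AZ} or the constant rank of $\ker(ds_U)\cap\ker(dr_U)$, and it proves the bracket identity invariantly by pushing forward along $s_U$ rather than in coordinates --- your cancellation $\sum_i a_i(u)(X_ib_l)(u)=0$ on $U_x$ is the same observation, valid because $\sum_i a_i(u)X_i(x)=0$ there.
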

 \begin{proof}The maps $$\mathfrak{d}_us_{U|\ker(dr_U)}:\ker(dr_U)_u\to \cF_x$$ vary smoothly as $u$ varies in $U_x$. To see this, one chooses any local basis of $\Gamma(\ker(dr_U))$ and then write each base element as in Equation \eqref{eqn:proof math frak d}. Hence $\mathfrak{d}'s_U^{-1}(V)$ is a subbundle of $\ker(dr_U)_{|U_x}$. Since $$\mathfrak{d}'_us_U^{-1}(V)\subseteq \ker(dr)_u\cap \ker(ds)_u\subseteq \ker(ds_U)_u=T_uU_x,$$ it follows that $\mathfrak{d}'s_U^{-1}(V)$ is a vector subbundle of $TU_x$ as well.

 Let $L=\Gamma(\ker(ds)_{|U_x})\cap \Gamma(\ker(dr_U)_{|U_x})$, where the intersection is taken as subsets of $\Gamma(TU_{|U_x})$. We claim that the Lie bracket endows $L$ with the structure of a Lie algebra. To see this let $X,Y\in L$. Then \begin{itemize}
 \item from one side $X,Y\in\Gamma(\ker(ds)_{|U_x})$ are both vector fields on $U_x$, hence $[X,Y]$ is well defined and $[X,Y]\in \Gamma(\ker(ds)_{U_x})$.
 \item from the other side, $X,Y\in \Gamma(\ker(dr_U)_{|U_x})$ admit an extension $\tilde{X},\tilde{Y}\in\Gamma(\ker(dr_U))$. Furthermore $[\tilde{X},\tilde{Y}]\in \Gamma(\ker(dr_U))$ because $\ker(dr_U)$ is a regular foliation. Hence the restriction of $[\tilde{X},\tilde{Y}]$ to $U_x$ still belongs to $\Gamma(\ker(dr_U)_{U_x})$. Finally the Lie bracket is independent of the choice of the extensions on $U_x$ and is equal to $[X,Y]$.
 \end{itemize}
 Let $u\in U_x$. We define a map $\phi_u:L\to \mathfrak{h}_x$ by $$L\xrightarrow{\mathrm{ev_u}} \ker(ds_U)_u\cap \ker(dr_U)_u\xrightarrow{\mathfrak{d}'_us_U}\mathfrak{h}_x,$$ where $\ev_u$ is evaluation at $u$. By Remark \ref{rem:Lie algebra hx}, $\mathfrak{h}_x$ is also a Lie algebra.  \begin{lem}\label{lem:temp 1234} Let $X,Y\in L$. Then $u\mapsto\phi_u(X)$ is a smooth function $U_x\mapsto\mathfrak{h}_x$ which we denote by $\phi(X)$. One has $$\phi_u([X,Y])=[\phi_u(X),\phi_u(Y)]-Y\cdot \phi(X)(u)+X\cdot \phi(Y)(u).$$ Here $Y\cdot \phi(X)$ means the differential of the function $\phi(X)$ in the direction of $Y$, which is well defined because $Y$ is a vector field on $U_x$.
 \end{lem}
 \begin{proof}
  Let $X,Y\in L$, $\tilde{X},\tilde{Y}\in \Gamma(\ker(dr_U))$ extensions of $X$ and $Y$. Then $\tilde{X},\tilde{Y}\in r^{-1}_U(\cF)=s^{-1}_U(\cF)$. Hence there exists $f_i,g_i\in C^\infty(U)$, $X_i,Y_i\in \cF$ such that $$ds_U(\tilde{X})=\sum f_i X_i\circ s_U,\quad ds_U(\tilde{Y})=\sum g_i Y_i\circ s_U.$$ 
Hence  $$\phi_u(X)=\sum f_i(u)[X_i]_x,\quad \phi_u(X)=\sum g_i(u)[Y_i]_x.$$ This implies smoothness of $\phi(X)$ and $\phi(Y)$. Since
\begin{align*}
   ds_U([\tilde{X},\tilde{Y}])=\sum_{i,j}f_ig_j [X_i,Y_j]\circ s_U-\sum_i (\tilde{Y}\cdot f_i)X_i\circ s_U+\sum_j (\tilde{X}\cdot g_j)Y_j\circ s_U. 
\end{align*}
It follows that \begin{align*}
 \phi_u([X,Y])&=\sum_{i,j}f_i(u)g_j(u) [[X_i,Y_j]]_x-\sum_i \tilde{Y}\cdot f_i(u)[X_i]_x+\sum_j \tilde{X}\cdot g_j(u)[Y_j]_x\\
 &=\left[\phi_u(X),\phi_u(Y)\right]-Y\cdot \phi(X)(u)+X \cdot\phi(Y)(u).\qedhere
\end{align*}
\end{proof} 
 Since $\Gamma(\mathfrak{d}'s_U^{-1}(V))=\cap_{u\in U_x}\phi_u^{-1}(V)$. Proposition \ref{prop:gamma foliation} and Lemma \ref{lem:temp 1234} imply that $\mathfrak{d}'s_U^{-1}(V)$ is a regular foliation.\qedhere
 \end{proof}
\begin{ex}Consider the bi-submersion in Example \ref{exs:bisubmersions}.1. If $x\in M$, then $U_x=G\times \{x\}$. Let $V\in \blup(\cF)_x$. A straightforward computation shows that the leaf of the foliation $\mathfrak{d}'s_U^{-1}(V)$ which passes through $(g,x)$ is $g\exp(V)\times\{x\}$. 
\end{ex}
 Let $u\in U_x$. Then we denote by $\cL_s(U,u,V)$ the leaf of $\mathfrak{d}'s_U^{-1}(V)$ passing through $u$. We remark that since $\mathfrak{d}'s_U^{-1}(V)\subseteq \ker(dr_U)\cap \ker(ds_U)$, it follows that \begin{equation}\label{eqn:double leaf up}
 \cL_s(U,u,V)\subseteq U_{s_U(u)}^{r_U(u)}.
\end{equation}
Similar to $U_x$, we foliate $U^x$ by the regular foliation $\mathfrak{d}'r_U^{-1}(V)$, whose leaves are denoted by $\cL_r(U,u,V)$.
\begin{theorem}[Leaf decomposition of the holonomy groupoid]\label{thm:leaf decomp}
\begin{enumerate}
\item Let $(V,x)\in \blup(\cF)$, $\gamma\in \cH(\cF)_x$. We define $$\cL_s(\gamma,V)=\cup_{(U,u)}q_U(\cL_s(U,u,V)),$$ where the union is over $U,u$ with $q_U(u)=\gamma$. This leaf decomposition of $\cH(\cF)_x$ comes from a regular foliation. Similarly, we define $\cL_r(\gamma^{\prime},V)$ if $\gamma^{\prime}\in \cH^x$.
\item The leaf decomposition of the $\cH(\cF)_x$ is compatible with the groupoid structure \begin{align}\label{eqn:thm leaf decomp 1}
\gamma'\cL_s(\gamma,V)=\cL_s(\gamma'\gamma,V),\, \cL_s(\gamma,V)\gamma^{\prime\prime}=\cL_s(\gamma\gamma'',\gamma''^{-1}\cdot V), \, \cL_s(\gamma,V)^{-1}=\cL_s(\gamma^{-1},\gamma\cdot V),
\end{align}
where $\gamma'\in \cH_{s_{\cH(\cF)}(\gamma)}$, $\gamma''\in \cH^{r_{\cH(\cF)}(\gamma)}$, and we use the action of $\cH(\cF)$ on $\blup(\cF)$ given in Theorem \ref{thm:action}. The formulas are well defined by Eqn. \ref{eqn:double leaf up}.
\item It is compatible with the action on $\blup(\cF)$ \begin{align}\label{eqn:thm leaf decomp 2}
 \gamma'\in \cL_s(\gamma,V)\Rightarrow \gamma'\cdot V=\gamma\cdot V.
\end{align}
\item The foliation using $\cL_r$ is the related to $\cL_s$ by the identity $\cL_s(\gamma,V)=\cL_r(\gamma,\gamma\cdot V)$.
\end{enumerate} 
\end{theorem}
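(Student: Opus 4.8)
The plan is to prove the four parts in the order dictated by their dependencies: first (1), then the pointwise infinitesimal identity behind (4), then (3), and finally (4) and (2), since both of the latter rest on (3). Throughout I work at the level of a single bi-submersion $U$, where $\cL_s(U,u,V)$ is the honest leaf of the \emph{regular} foliation $\mathfrak{d}'s_U^{-1}(V)$ produced by Proposition \ref{prop:foliation}, and only then push forward along $q_U$ using that $\cL_s(\gamma,V)=\bigcup q_U(\cL_s(U,u,V))$.

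For (1) the heart is naturality of the foliation $\mathfrak{d}'s_U^{-1}(V)$ under morphisms of bi-submersions. Given a morphism $f:U\to U'$ and $u\in U_x$, I would first check
\[
\mathfrak{d}'_{f(u)}s_{U'}\circ d_uf=\mathfrak{d}'_us_U\quad\text{on }\ker(ds_U)_u\cap\ker(dr_U)_u,
\]
which is immediate from $ds_U=ds_{U'}\circ df$ and $r_U=r_{U'}\circ f$: a holonomy vector field $ds_U(X)$ with $X\in\Gamma(\ker dr_U)$ equals $ds_{U'}(df\,X)$, so its class in $\cF_x$ is computed identically on either side. When $f$ is a submersion, Proposition \ref{prop:image hx} gives $\ker(d_uf)\subseteq\mathfrak{d}'_us_U^{-1}(V)$ (vectors tangent to the fibres of $f$ extend to sections of $\Gamma(\ker ds_U)\cap\Gamma(\ker dr_U)$, hence lie in the kernel of $\mathfrak{d}'_us_U$), so $f$ carries $\cL_s(U,u,V)$ submersively onto an open piece of $\cL_s(U',f(u),V)$, and since $q_U=q_{U'}\circ f$ the images under $q$ agree near $\gamma$. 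A general morphism is reduced to the submersion case exactly as in Theorem \ref{thm:action}, interposing via \cite[Corollary 3.12]{AS1} a $U''$ with $g$ and $g\circ f$ both submersions. With naturality in hand I descend the foliation through the local homeomorphisms $q_{U|U_x}$ of minimal bi-submersions (Remark \ref{rem:open map}): any two such are locally related by morphisms that are local diffeomorphisms, which preserve the foliation by the above, so the $\cL_s(\gamma,V)$ are well defined and are the leaves of a regular foliation of $\cH(\cF)_x$.

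Next I record the pointwise identity $\mathfrak{d}'_us_U^{-1}(V)=\mathfrak{d}'_ur_U^{-1}(u\cdot V)$: by Proposition \ref{prop:image hx} the maps $\mathfrak{d}'_us_U$ and $\mathfrak{d}'_ur_U$ are surjective onto $\mathfrak{h}_{s_U(u)}$, $\mathfrak{h}_{r_U(u)}$ with the \emph{same} kernel $K$, so both subspaces have dimension $\dim K+\dim V$ (the action preserves dimensions) and the evident inclusion $\mathfrak{d}'_us_U^{-1}(V)\subseteq\mathfrak{d}'_ur_U^{-1}(u\cdot V)$ is an equality. To prove (3) I first establish the left-translation identity $\gamma'\cL_s(\gamma,V)=\cL_s(\gamma'\gamma,V)$ of (2), which uses nothing from (3): representing $\gamma'\gamma$ on a product $U'\circ U$ and using that $r_U$ is \emph{constant} along $\cL_s(U,u,V)$ by \eqref{eqn:double leaf up}, the leaf of $\mathfrak{d}'s_{U'\circ U}^{-1}(V)$ keeps the first coordinate in a fixed $q_{U'}$-class while the second sweeps out $\cL_s(U,u,V)$, which upon applying $q_{U'\circ U}(a,b)=q_{U'}(a)q_U(b)$ gives precisely $\gamma'\cL_s(\gamma,V)$. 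Specialising to $\gamma'=1_x$ exhibits every leaf as a left translate $\cL_s(\gamma,V)=\gamma\,L_V$, where $L_V:=\cL_s(1_x,V)\subseteq\cH(\cF)_x^x$. The same identity restricted to the isotropy group shows $\cL_s(\cdot,V)$ is a left-$\cH(\cF)_x^x$-invariant foliation of the Lie group $\cH(\cF)_x^x$ (Theorem \ref{thm:AZ}), with tangent space $V\subseteq\mathfrak{h}_x$ at the unit under $dq_U$; since $V$ is a Lie subalgebra (Proposition \ref{prop:gamma foliation}) this invariant foliation integrates $V$, so $L_V=\exp(V)$. Because the linear isotropy action of $\cH(\cF)_x^x$ on $\mathfrak{h}_x$ is the adjoint action (Theorem \ref{thm:AZ}, Remark \ref{rem:Lie algebra hx}) and $V$ is a subalgebra, $\exp(V)$ preserves $V$; hence for $\gamma'=\gamma k\in\gamma L_V=\cL_s(\gamma,V)$ we get $\gamma'\cdot V=\gamma\cdot(k\cdot V)=\gamma\cdot V$, which is (3).

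Part (4) then follows from the pointwise identity and (3): along $\cL_s(U,u,V)$ the action is constant equal to $W:=u\cdot V$, so at every $u'$ on the leaf $T_{u'}\cL_s(U,u,V)=\mathfrak{d}'_{u'}s_U^{-1}(V)=\mathfrak{d}'_{u'}r_U^{-1}(W)$; thus $\cL_s(U,u,V)$ is an integral manifold of $\mathfrak{d}'r_U^{-1}(W)$, the reverse inclusion being the same statement applied to $U^{-1}$, giving $\cL_s(U,u,V)=\cL_r(U,u,W)$ and so $\cL_s(\gamma,V)=\cL_r(\gamma,\gamma\cdot V)$. For (2), the inverse identity $\cL_s(\gamma,V)^{-1}=\cL_s(\gamma^{-1},\gamma\cdot V)$ is read off from the inverse bi-submersion $U^{-1}$ (where $\mathfrak{d}'s_{U^{-1}}=\mathfrak{d}'r_U$, so $\cL_s(U^{-1},u,\gamma\cdot V)=\cL_r(U,u,\gamma\cdot V)=\cL_s(U,u,V)$ by (4)), and the right-translation identity follows formally by combining the left-translation and inverse identities. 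I expect the two genuinely delicate points to be, first, the naturality step in (1) for \emph{non-submersive} morphisms, which is why the $U''$-interposition of Theorem \ref{thm:action} is needed; and second, more seriously, the identification of $L_V$ with $\exp(V)$ together with the adjoint nature of the linear isotropy in (3). This last step is where Theorem \ref{thm:AZ} is indispensable: it is what converts the purely infinitesimal datum (the subalgebra $V$) into the integrated assertion that the action is constant along entire leaves. A direct attempt to prove (3) by differentiating $u'\mapsto u'\cdot V$ through the cocycle of Lemma \ref{lem:temp 1234} stalls on a symmetric remainder term, which is precisely why I route (3) through the group structure of $\cH(\cF)_x^x$ instead.
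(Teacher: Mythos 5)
Your part (1) follows the paper's own route almost verbatim (the commuting-diagram naturality lemma, reduction of non-submersive morphisms via \cite[Corollary 3.12]{AS1}, gluing through minimal bi-submersions), and is fine. For parts (2)--(4), however, you diverge genuinely from the paper: the paper proves nothing structural about the leaves at singular points. Instead it establishes continuity of the distribution $(u,(V,s_U(u)))\mapsto \mathfrak{d}'s_U^{-1}(V)_u$ in the Grassmannian bundle (Lemma \ref{lem:continuity of leaves}), uses density of $(0,x_n)$ with $x_n\in M_{\reg}$ in $\blup(\cF)$ to reduce all three identities to the case $V=0$ over regular points, and there invokes Theorem \ref{thm:AZ} to conclude the leaves are singletons, making (2), (3), (4) trivial. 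Your route instead identifies the leaves explicitly: left-translation invariance via product bi-submersions, then $\cL_s(\gamma,V)=\gamma\exp(V)$ inside the isotropy Lie group, then (3) from $\mathrm{Ad}(\exp(V))V=V$. This buys more (an explicit coset description of the leaves, with no limiting argument), and most of it checks out: your pointwise identity $\mathfrak{d}'_us_U^{-1}(V)=\mathfrak{d}'_ur_U^{-1}(u\cdot V)$ is correct given the paper's remark that $u\cdot V=\mathfrak{d}'_ur_U(\mathfrak{d}'_us_U^{-1}(V))$ and the common-kernel statement of Proposition \ref{prop:image hx}; the inverse and right-translation identities do follow formally; and your product-bi-submersion argument works, although your description of the product leaf overstates it --- the distribution on $U'\circ U$ contains vectors with nonzero first component (those valued in the common kernel of the first factor), so the leaf is not literally $\{u'\}\times\cL_s(U,u,V)$; you only get that $\{u'\}\times\cL_s(U,u,V)$ is an integral submanifold contained in the product leaf, giving one inclusion $\gamma'\cL_s(\gamma,V)\subseteq\cL_s(\gamma'\gamma,V)$, and you must then apply the same with $\gamma'^{-1}$ to get equality.

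The genuine gap is the step you yourself flag as the crux: the assertion that ``the linear isotropy action of $\cH(\cF)_x^x$ on $\mathfrak{h}_x$ is the adjoint action (Theorem \ref{thm:AZ}, Remark \ref{rem:Lie algebra hx}).'' Neither cited result says this. Theorem \ref{thm:AZ} only gives the Lie group structure on the identity component of $\cH(\cF)_x^x$ and the principal bundle structure; Remark \ref{rem:Lie algebra hx} only defines the bracket on $\mathfrak{h}_x$. What you need is that for $u\in U_x^x$ the automorphism $\mathfrak{d}'_ur_U\circ(\mathfrak{d}'_us_U)^{-1}$ of $\mathfrak{h}_x$ (through which the action of $k=q_U(u)$ on $\blup(\cF)_x$ is computed) coincides with $\mathrm{Ad}(k)$ for the group structure of Theorem \ref{thm:AZ} --- morally, that $\mathfrak{d}'s$ and $\mathfrak{d}'r$ are the left- and right-invariant trivializations of the isotropy group. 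This is true, and fillable (for instance by Theorem \ref{thm:funct blowup} applied to the flows $\exp_X$, $X\in\cF$, which is exactly the mechanism the paper gestures at in its final example, or by comparing trivializations in the model bi-submersion of Example \ref{exs:bisubmersions}.2), but it is a nontrivial lemma carrying the entire weight of your proof of (3), and since (4) and the inverse/right-translation identities in your scheme all route through (3), the proposal as written is incomplete precisely there. The paper's continuity-and-degeneration argument exists, one suspects, exactly to avoid proving this identification.
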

\begin{cor}The space $N\subseteq \cH(\cF)\ltimes \blup(\cF)$ given by $$N=\{(\gamma,V)):\gamma\in \cL_s(x,V)\}$$ is a subgroupoid. We define the holonomy blowup groupoid $$\hblup(\cF):=\big(\cH(\cF)\ltimes \blup(\cF)\big)/N.$$ We denote the quotient map by $\cQ:\cH(\cF)\ltimes \blup(\cF)\to \hblup(\cF)$.
\end{cor}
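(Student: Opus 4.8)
The plan is to verify directly that $N$ is closed under the groupoid operations of $\cH(\cF)\ltimes\blup(\cF)$, and moreover that it is a normal wide subgroupoid, so that the quotient in the statement is a genuine groupoid. Throughout I write $\cL_s(x,V)$ for the leaf of Theorem~\ref{thm:leaf decomp}.1 through the unit $1_x\in\cH(\cF)_x^x$. The starting observation is that $N$ is totally intransitive, i.e.\ a bundle of groups over $\blup(\cF)$. Indeed, if $(\gamma,V)\in N$ with $x=s_{\cH(\cF)}(\gamma)$, then $\gamma\in\cL_s(x,V)\subseteq\cH(\cF)_x^x$ by Eqn.~\eqref{eqn:double leaf up}, so $r_{\cH(\cF)}(\gamma)=x$, and Eqn.~\eqref{eqn:thm leaf decomp 2} applied to the leaf through the unit gives $\gamma\cdot V=x\cdot V=V$. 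Hence in the action groupoid both the source and the range of $(\gamma,V)$ equal the object $(V,x)$, so every element of $N$ sits in the isotropy group over its base object.

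With this in hand I would check the remaining axioms. The units $(1_x,V)$ of $\cH(\cF)\ltimes\blup(\cF)$ all lie in $N$ because a leaf always contains its own base point, so $1_x\in\cL_s(x,V)$; thus $N$ is wide. For inverses, the last identity in Eqn.~\eqref{eqn:thm leaf decomp 1} with $\gamma=1_x$ reads $\cL_s(x,V)^{-1}=\cL_s(x,x\cdot V)=\cL_s(x,V)$, so $\gamma\in\cL_s(x,V)$ forces $\gamma^{-1}\in\cL_s(x,V)$, and $(\gamma,V)^{-1}=(\gamma^{-1},\gamma\cdot V)=(\gamma^{-1},V)\in N$. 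For composition, two elements of $N$ are composable only if they lie over the same object $(V,x)$, in which case the product is $(\gamma_1\gamma_2,V)$ with $\gamma_1,\gamma_2\in\cL_s(x,V)$. The first identity of Eqn.~\eqref{eqn:thm leaf decomp 1} gives $\gamma_1\gamma_2\in\gamma_1\cL_s(x,V)=\cL_s(\gamma_1,V)$; since $\gamma_1$ itself belongs to $\cL_s(x,V)$ and the $\cL_s(\,\cdot\,,V)$ are the leaves of one regular foliation (Theorem~\ref{thm:leaf decomp}.1), the leaves through $\gamma_1$ and through $1_x$ coincide, whence $\gamma_1\gamma_2\in\cL_s(x,V)$ and $(\gamma_1\gamma_2,V)\in N$.

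The last and main step is normality, which is what makes the quotient $\hblup(\cF)$ a groupoid. Conjugating $(\gamma,V)\in N$ by a general arrow $(\eta,V)$ with $s_{\cH(\cF)}(\eta)=x$ gives, after the bookkeeping of source and range fibers, $(\eta,V)(\gamma,V)(\eta,V)^{-1}=(\eta\gamma\eta^{-1},\eta\cdot V)$. I would then chain the equivariance identities of Eqn.~\eqref{eqn:thm leaf decomp 1}: first $\eta\gamma\in\eta\,\cL_s(x,V)=\cL_s(\eta,V)$, and then right multiplication by $\eta^{-1}$ (note $r_{\cH(\cF)}(\eta^{-1})=s_{\cH(\cF)}(\eta)=x$ is the base point of the leaf $\cL_s(\eta,V)$) yields $\cL_s(\eta,V)\,\eta^{-1}=\cL_s(\eta\eta^{-1},\eta\cdot V)=\cL_s(r_{\cH(\cF)}(\eta),\eta\cdot V)$, so that $\eta\gamma\eta^{-1}\in\cL_s(r_{\cH(\cF)}(\eta),\eta\cdot V)$ and the conjugate lies in $N$. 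The delicate point throughout, and the step I expect to require the most care, is keeping the base points and source/range fibers of the leaves matched when invoking the multiplication formulas of Theorem~\ref{thm:leaf decomp}.2; once the indices are aligned, the upgrade from ``$\gamma$ lies in a given leaf'' to ``the leaf through $\gamma$ equals that leaf'' is forced by the partition of each source fibre into leaves of a single regular foliation.
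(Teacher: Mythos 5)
Your proof is correct and is essentially the paper's own route: the paper offers no separate argument for this corollary, treating it as an immediate consequence of Theorem \ref{thm:leaf decomp}, and your verification---that $N$ is a wide bundle of isotropy groups by \eqref{eqn:double leaf up} and \eqref{eqn:thm leaf decomp 2}, and is closed under products and inverses and is normal by the identities \eqref{eqn:thm leaf decomp 1}---is exactly that consequence written out, with the cosets of $N$ recovering the leaves via $\gamma\,\cL_s(x,V)=\cL_s(\gamma,V)$. Your care in matching base points and source/range fibers is appropriate (it in effect corrects the fiber conventions stated after \eqref{eqn:thm leaf decomp 1}, which should read $\gamma'\in\cH_{r_{\cH(\cF)}(\gamma)}$ and $\gamma''\in\cH^{s_{\cH(\cF)}(\gamma)}$ for composability), but introduces no idea beyond what the paper intends.
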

We also define $\chblup(\cF):=\big(\cH(\cF)\ltimes \cblup(\cF)\big)/N$, where $N\subseteq \cH(\cF)\ltimes \cblup(\cF)$ given by $N=\{(\gamma,(V,x)):\gamma\in \cL_s(x,V)\}$.
\begin{proof}[Proof of Theorem \ref{thm:leaf decomp}]
\begin{lem}\label{prop:f comm diag}Let $f$ a local morphism of bi-submersion at $u\in U$. The diagram$$\begin{tikzcd}\ker(ds_U)_u\cap\ker(dr_U)_u\arrow[r,"d_uf"]\arrow[dr,"\mathfrak{d}'_us_{U}"']&\ker(ds_{U'})_{f(u)}\cap \ker(dr_{U'})_{f(u)}\arrow[d,"\mathfrak{d}'_{f(u)}s_{U'}"]\\&\cF_{s_U(u)}
\end{tikzcd}$$ commutes. Similarly with $\mathfrak{d}'_ur_{U}$ and $\mathfrak{d}'_{f(u)}r_{U'}$.
\end{lem}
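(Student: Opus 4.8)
The plan is to establish the identity $\mathfrak{d}'_{f(u)}s_{U'}\circ d_uf=\mathfrak{d}'_us_U$ on $\ker(ds_U)_u\cap\ker(dr_U)_u$ in two stages: first for morphisms that are submersions at $u$, where both sides can be read off from the defining cocycle formula for $\mathfrak{d}$, and then in general by factoring $f$ through a third bi-submersion. Before anything else I would check that the diagram is well-posed, i.e. that $d_uf$ sends $\ker(ds_U)_u\cap\ker(dr_U)_u$ into $\ker(ds_{U'})_{f(u)}\cap\ker(dr_{U'})_{f(u)}$; this is immediate from $s_{U'}\circ f=s_U$ and $r_{U'}\circ f=r_U$, which give $ds_{U'}\circ d_uf=ds_U$ and $dr_{U'}\circ d_uf=dr_U$. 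Note also that the two targets $\cF_{s_U(u)}$ and $\cF_{s_{U'}(f(u))}$ coincide since $s_U(u)=s_{U'}(f(u))$.

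For the submersion case, fix $v\in\ker(ds_U)_u\cap\ker(dr_U)_u$, set $w=d_uf(v)$, and choose $X'\in\Gamma(\ker(dr_{U'}))$ near $f(u)$ with $X'(f(u))=w$; by the definition of $\mathfrak{d}'$ one has $\mathfrak{d}'_{f(u)}s_{U'}(w)=\mathfrak{d}_{f(u)}s_{U'}([X']_{f(u)})$, computed after writing $ds_{U'}(X')=\sum_j g_jX'_j\circ s_{U'}$. The key construction is to lift $X'$ to an $f$-related field: since $f$ is a submersion at $u$ I can find $X\in\Gamma(\ker(dr_U))$ near $u$ with $df\circ X=X'\circ f$ and $X(u)=v$, built in submersion coordinates for $f$ with its fibre component fixed at $u$ to match $v$; that $X$ automatically lands in $\ker(dr_U)$ follows from $r_U=r_{U'}\circ f$ together with $X'\in\ker(dr_{U'})$. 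Then $ds_U(X)=(ds_{U'}(X'))\circ f=\sum_j(g_j\circ f)\,X'_j\circ s_U$, so the cocycle formula gives $\mathfrak{d}'_us_U(v)=\sum_jg_j(f(u))[X'_j]_{s_U(u)}$, which is exactly $\mathfrak{d}'_{f(u)}s_{U'}(w)$. Here I use that $\ker(dr_U)\subseteq r_U^{-1}(\cF)=s_U^{-1}(\cF)$ by \eqref{eqn:bi-sub}, so that the class $[X]_u$ represents $v$ in $s_U^{-1}(\cF)_u$, matching the definition of $\mathfrak{d}'_us_U$.

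The main obstacle is that a general morphism of bi-submersions need not be a submersion, so the lifting step fails. I would resolve this exactly as in the proof of Theorem \ref{thm:action}: by \cite[Corollary 3.12]{AS1} there is a bi-submersion $U''$ and a local morphism $g:U'\to U''$ at $f(u)$ such that both $g$ and $g\circ f$ are submersions (at $f(u)$ and $u$ respectively). Applying the submersion case to $g$ and to the composite morphism $g\circ f$ yields $\mathfrak{d}'_{f(u)}s_{U'}=\mathfrak{d}'_{g(f(u))}s_{U''}\circ d_{f(u)}g$ and $\mathfrak{d}'_us_U=\mathfrak{d}'_{g(f(u))}s_{U''}\circ d_u(g\circ f)$. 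Since $d_u(g\circ f)=d_{f(u)}g\circ d_uf$, composing the first identity with $d_uf$ and comparing with the second gives $\mathfrak{d}'_{f(u)}s_{U'}\circ d_uf=\mathfrak{d}'_us_U$, as desired. The statement for $r$ is obtained by the same argument with the roles of $s$ and $r$ interchanged, equivalently by passing to the inverse bi-submersions.
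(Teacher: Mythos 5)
Your proposal is correct and takes essentially the same route as the paper: the paper's proof also treats the case where $f$ is a submersion first (declaring it obvious, which your explicit lifting of $X'$ to an $f$-related section of $\Gamma(\ker(dr_U))$ with prescribed value $v$ at $u$ fleshes out) and then reduces the general case via \cite[Corollary 3.12]{AS1} exactly as in the proof of Theorem \ref{thm:action}. There are no gaps; your version merely spells out the cocycle computation that the paper leaves to the reader.
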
 
\begin{proof}
If $f$ is a submersion, then this is obvious. For a general $f$, one uses \cite[Corollary 3.12]{AS1} like in the proof of Theorem \ref{thm:action}. \end{proof}
Let $U$ be a minimal bi-submersion at $x$. Hence $q_{U|U_x}:U_x\to \cH(\cF)_{x}$ is a local diffeomorphism. One can then transport the regular foliation $\mathfrak{d}'s_U^{-1}(V)$ to a regular foliation on $\cH(\cF)_x$. Lemma \ref{prop:f comm diag} implies the regular foliations obtained glue together for different $U$. Furthermore its leaves are $\cL_s(\gamma,V)$ for $\gamma\in \cH(\cF)_x$.

\begin{lem}\label{lem:continuity of leaves}Let $U$ be a bi-submersion, $\GR(\ker(ds_U))$ the fiber bundle over $U$ whose fiber over $u$ is $\GR(\ker(ds_U)_u)$. Then the following map is continuous \begin{align*}
U\times_{s_U,\pi_{\blup(\cF)}}\blup(\cF)\to \GR(\ker(ds_U)),\quad (u,(V,s_U(u)))\to \mathfrak{d}'s_U^{-1}(V)_u
\end{align*}
\end{lem}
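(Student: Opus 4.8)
The plan is to reduce the statement to a frame-level computation near a point $u_0\in U$ and then invoke the elementary fact that the preimage of a continuously varying subspace under a continuously varying linear map is continuous, \emph{provided} the dimension of the preimage stays constant. First I would fix $u_0\in U$, set $x_0=s_U(u_0)$, and choose $X_1,\dots,X_k\in\cF$ whose classes $[X_1]_{x_0},\dots,[X_k]_{x_0}$ form a basis of $\cF_{x_0}$, so that by Proposition \ref{prop:AS} they generate $\cF$ near $x_0$ and $\phi_y$ is defined for $y$ near $x_0$. Picking a local frame $Y_1,\dots,Y_p$ of the subbundle $\ker(dr_U)$ near $u_0$ and using $\ker(dr_U)\subseteq r_U^{-1}(\cF)=s_U^{-1}(\cF)$, I would write $ds_U(Y_a)=\sum_i g_{ai}\,X_i\circ s_U$ with $g_{ai}\in C^\infty$ near $u_0$, exactly as in the proof of Proposition \ref{prop:foliation}.

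The central observation is that the object to be controlled is a genuine preimage of a linear map defined on the \emph{full} fibre $\ker(dr_U)_u$. Writing $\alpha_u=\mathfrak{d}_us_U|_{\ker(dr_U)_u}\colon\ker(dr_U)_u\to\cF_{s_U(u)}$, which is surjective as in Proposition \ref{prop:image hx}, the identity \eqref{eqn:proof math frak d} gives $\natural_{s_U(u)}(\alpha_u(v))=d_us_U(v)$, so $\alpha_u^{-1}(\mathfrak{h}_{s_U(u)})=\ker(dr_U)_u\cap\ker(ds_U)_u$. Since $V\subseteq\mathfrak{h}_{s_U(u)}$ by Proposition \ref{prop:gamma foliation}, this yields
\[
\mathfrak{d}'_us_U^{-1}(V)=\alpha_u^{-1}(V)\subseteq\ker(ds_U)_u,
\]
which neatly sidesteps the subtlety that the two inclusions of $\ker(dr_U)_u\cap\ker(ds_U)_u$ differ. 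In the chosen frames $\alpha_u=\phi_{s_U(u)}\circ\beta_u$, where $\beta_u\colon\ker(dr_U)_u\to\cF_{x_0}$ sends $Y_a(u)$ to $\sum_i g_{ai}(u)[X_i]_{x_0}$, so $\beta_u$ has the continuously varying matrix $(g_{ai}(u))$ into the \emph{fixed} space $\cF_{x_0}$. Consequently
\[
\mathfrak{d}'_us_U^{-1}(V)=\beta_u^{-1}\big(\phi_{s_U(u)}^{-1}(V)\big),
\]
and $(V,y)\mapsto\phi_y^{-1}(V)\in\GR(\cF_{x_0})$ is continuous because it is a coordinate of the embedding \eqref{eqn:top blowup} defining the topology of $\blup(\cF)$. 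The count $\dim\mathfrak{d}'_us_U^{-1}(V)=p-\big(\dim\cF_{s_U(u)}-\dim V\big)=p-\dim V^{\perp}$ shows, using that $(V,x)\mapsto\dim V^{\perp}$ (not $\dim V$) is the locally constant invariant on $\blup(\cF)$, that this dimension is locally constant on the fibre product — which is exactly what the disjoint‑union‑over‑dimensions topology of the target Grassmann bundle requires.

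Finally I would prove continuity sequentially. Given $(u_n,(V_n,y_n))\to(u_0,(V_0,y_0))$, set $S_n=\mathfrak{d}'_{u_n}s_U^{-1}(V_n)$; by the previous step $\dim S_n=d$ is eventually constant. Trivialising $TU$ near $u_0$ makes each $S_n$ a point of the compact $\GR_d(\R^N)$ with $N=\dim U$, so it suffices to show that the limit $S'$ of any convergent subsequence is contained in $S_0$, equality then following from equal dimensions. If $v\in S'$ is the limit of $v_{n_j}\in S_{n_j}$, then $v\in\ker(ds_U)_{u_0}\cap\ker(dr_U)_{u_0}$ by passing to the limit in $d_{u_{n_j}}s_U(v_{n_j})=0$ and $d_{u_{n_j}}r_U(v_{n_j})=0$; expressing $v_{n_j}$ in the frame $Y_a$ (the frame coefficients converge since $Y_1,\dots,Y_p$ extend to a coframe of $TU$) and using continuity of the $g_{ai}$ gives $\beta_{u_{n_j}}(v_{n_j})\to\beta_{u_0}(v)$ in $\cF_{x_0}$, while $\beta_{u_{n_j}}(v_{n_j})\in\phi_{y_{n_j}}^{-1}(V_{n_j})\to\phi_{y_0}^{-1}(V_0)$. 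Closedness of limits in $\GR(\cF_{x_0})$ then forces $\beta_{u_0}(v)\in\phi_{y_0}^{-1}(V_0)$, i.e. $v\in S_0$.

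I expect the main obstacle to be bookkeeping the two sources of discontinuity that could a priori occur — a jump in the rank of $\beta_u$ and a jump in $\dim(\im\beta_u\cap\phi_y^{-1}(V))$ — and verifying that both are suppressed precisely by the local constancy of $\dim V^{\perp}$. Once the dimension is pinned down, the compactness‑plus‑closedness argument is routine; the only other care needed is that all subspaces live in varying fibres $\ker(ds_U)_u$, which is handled by working inside $\GR(TU)$ via a fixed local trivialisation and noting that $\GR(\ker ds_U)\hookrightarrow\GR(TU)$ is a topological embedding.
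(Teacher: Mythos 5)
Your proof is correct and takes essentially the approach the paper intends: the paper omits the argument as ``very similar to the proof of Proposition~\ref{prop:foliation}.1'', and your frame computation $ds_U(Y_a)=\sum_i g_{ai}\,X_i\circ s_U$ with the factorisation $\mathfrak{d}'_us_U^{-1}(V)=\beta_u^{-1}\bigl(\phi_{s_U(u)}^{-1}(V)\bigr)$ is exactly that argument, extended to varying $(V,y)$ via the defining embedding \eqref{eqn:top blowup}. Your two key checks are sound: the identification $\mathfrak{d}'_us_U^{-1}(V)=\alpha_u^{-1}(V)$ follows from Eqn.~\eqref{eqn:proof math frak d} (which gives $\alpha_u^{-1}(\mathfrak{h}_{s_U(u)})=\ker(ds_U)_u\cap\ker(dr_U)_u$, consistent with Proposition~\ref{prop:image hx}) together with $V\subseteq\mathfrak{h}_{s_U(u)}$ from Proposition~\ref{prop:gamma foliation}, and the local constancy of $\dim V^\perp$ correctly pins the Grassmannian component, after which the compactness-plus-closedness limit argument is routine.
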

The proof of Lemma \ref{lem:continuity of leaves} is very similar to the proof of Proposition \ref{prop:foliation}.1 and will be omitted. 

Since any $(V,x)\in\blup(\cF)$ is a limit of $(0,x_n)\in \blup(\cF)$ with $x_n\in M_{\reg}$, by Lemma \ref{lem:continuity of leaves}, it is enough to prove parts 2,4 for $V=0$. From Theorem \ref{thm:AZ}, it follows that $\cL_s(\gamma,V)=\{\gamma\}$ in $\cH_{s_{\cH(\cF)}(\gamma)}^{r_{\cH(\cF)}(\gamma)}$. Same for $\cL_r$. For part $3$, one also reduces to $V=0$. It then follows for dimension reasons using Proposition \ref{prop:gamma foliation}.1, that $\gamma'\cdot V=\gamma\cdot V=0$. This finishes the proof of Theorem \ref{thm:leaf decomp}. 
\end{proof}
\begin{rem}\label{rem:tangent space of hx}By Lemma \ref{prop:f comm diag}, we can identify the tangent space of $\cH(\cF)_x$  at $\gamma$ with $\cF_{r_{\cH(\cF)}(\gamma)}$, in such a way that if $U$ is a bi-submersion and $u\in U$, then the maps  \begin{align*}
&\mathfrak{d}_ur_{|\ker(ds)}:\ker(ds)_u\to \cF_{r(u)},\quad d_uq_{U|U_x}:\ker(ds)_u\to T_{q_U(u)}\cH(\cF_x)=\cF_{r(u)}
\end{align*} are the same.
\end{rem}
\subsection{Continuous family groupoid}
We equip $\cH(\cF)\ltimes \blup(\cF)$ with the product topology and $\hblup(\cF)$ with the quotient topology.
\begin{theorem}\label{main smoothness theorem}
\begin{enumerate}
\item Let $x\in M$, $\gamma\in \cH(\cF)_x$, $V\in \blup(\cF)_x$. Then the leaf $\cL_s(\gamma,V)$ is an embedded closed submanifold of $\cH(\cF)_x$.
\item The quotient space of $\cH(\cF)_x$ by the foliation given by $\cL_s$ is a smooth manifold whose tangent space at $x$ is $T\cF_{(V,x)}$.
\end{enumerate}
\end{theorem}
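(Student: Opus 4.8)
The plan is to reduce everything to the isotropy group and the analogue of Theorem \ref{closed}. I would first analyse the leaf $\cL_s(x,V)$ through the unit. By \eqref{eqn:double leaf up} this leaf is contained in the isotropy group $\cH(\cF)_x^x$, whose identity component $G_x$ is, by Theorem \ref{thm:AZ}, a simply connected Lie group with Lie algebra $\mathfrak{h}_x$. The left–translation identity in \eqref{eqn:thm leaf decomp 1} shows that for $\gamma\in\cL_s(x,V)$ one has $\gamma\,\cL_s(x,V)=\cL_s(\gamma,V)=\cL_s(x,V)$, the last equality because two leaves of the same regular foliation that meet at $\gamma$ coincide. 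Hence $\cL_s(x,V)$ is a connected subgroup of $\cH(\cF)_x^x$, so by Yamabe's theorem it is a connected immersed Lie subgroup; by Remark \ref{rem:tangent space of hx} together with Proposition \ref{prop:image hx} its tangent space at the unit is exactly $V\subseteq\mathfrak{h}_x$, and since a connected immersed subgroup is determined by its Lie algebra we get $\cL_s(x,V)=\exp(V)\subseteq G_x$. For a general $\gamma\in\cH(\cF)_x$, applying \eqref{eqn:thm leaf decomp 1} with the unit gives $\cL_s(\gamma,V)=\gamma\cdot\cL_s(x,V)=\gamma\exp(V)$, so every leaf is a single translate of $\exp(V)$; equivalently, the leaves are the orbits of the free right action of $\exp(V)\subseteq\cH(\cF)_x^x$ on the principal $\cH(\cF)_x^x$-bundle $r\colon\cH(\cF)_x\to l$.

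The crux of part (1) is therefore to prove that $\exp(V)$ is closed in $G_x$; granting this, a closed connected subgroup is embedded by Cartan's theorem, and left translation then makes every leaf $\gamma\exp(V)$ closed and embedded in $\cH(\cF)_x$. I would establish closedness exactly as in Section \ref{sec:group act}. Since a subgroup of a topological group that is locally closed is automatically closed, it suffices to prove local closedness near the unit. For this I would work in a minimal bi-submersion $U\subseteq M\times\R^k$ at $x$ with $r(y,t)=\exp_{\sum t_iX_i}(y)$, so that $q_{U|U_x}$ is a diffeomorphism near the unit and $\exp(V)$ is locally the $q_U$-image of the leaf of $\mathfrak{d}'s_U^{-1}(V)$ through $(x,0)$. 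Writing $V=\lim_n\phi_{x_n}^{-1}(\mathfrak{h}_{x_n})$ with $x_n\in M_{\reg}$ and applying the periodic bounding lemma (Lemma \ref{lem:periodic}) to the vector field $(y,a)\mapsto(\sum a_iX_i(y),0)$ on $M\times\R^k$ over the compact set $\{x_n:n\in\N\}\cup\{x\}$, exactly along the lines of Lemmas \ref{lemma 2} and \ref{lemama 123}, yields a uniform $\eta''>0$ such that any small $a$ with $q_U(x,a)\in\exp(V)$ forces $a\in V$. This is the parametrised periodic-bounding estimate transported to the holonomy setting, and it supplies the required local closedness. The delicate point, and the main obstacle, is precisely to match the abstract leaf $\cL_s(x,V)\subseteq\cH(\cF)_x^x$ with the flow description in the bi-submersion, so that the estimate — which a priori only controls the isotropy of the fibre action in $M\times\R^k$ — genuinely controls membership in the leaf $\exp(V)$; everything else parallels the group case verbatim.

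For part (2), once $\exp(V)$ is known to be a closed, hence embedded, Lie subgroup of $\cH(\cF)_x^x$, its right action on $\cH(\cF)_x$ is free and proper: freeness and properness of the full $\cH(\cF)_x^x$-action come from the principal bundle structure of $r\colon\cH(\cF)_x\to l$ in Theorem \ref{thm:AZ}, and the restriction of a proper action to a closed subgroup is again proper. Therefore the leaf space is the orbit space $\cH(\cF)_x/\exp(V)$, which is a smooth Hausdorff manifold (the quotient of a free proper Lie group action), and the quotient map is a submersion. Finally the tangent space at the base point $[x]$ is $T_x\cH(\cF)_x/V=\cF_x/V=T\cF_{(V,x)}$, using the identification $T_x\cH(\cF)_x\simeq\cF_x$ from Remark \ref{rem:tangent space of hx} and the computation of the orbit tangent as $V$ from part (1). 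This completes the plan.
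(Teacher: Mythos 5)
Your overall architecture is sound and genuinely different from the paper's. You reduce everything to the isotropy group: using \eqref{eqn:double leaf up}, \eqref{eqn:thm leaf decomp 1}, \eqref{eqn:thm leaf decomp 2} and Theorem \ref{thm:AZ} you correctly identify $\cL_s(x,V)$ with the connected immersed subgroup $\exp(V)\subseteq\cH(\cF)_x^x$ with Lie algebra $V$, so that closedness of $\exp(V)$, Cartan's theorem, and the free proper right action of $\exp(V)$ on the principal bundle $r:\cH(\cF)_x\to l$ yield parts (1) and (2), including the tangent space computation $T_{1_x}\cH(\cF)_x/V=\cF_x/V=T\cF_{(V,x)}$. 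The paper instead proves, uniformly over a compact neighbourhood of $(V,x)$ in $\blup(\cF)$, that leaf membership in a minimal bi-submersion chart forces the parameter into the slice (Lemma \ref{lem: af}), deduces that all leaves are closed embedded, and invokes Bourbaki's quotient criterion \cite[Section 5.9.5]{BourbakiDiffVariety}; that uniform lemma is also what drives the parametrised Theorem \ref{thm:closed leaves general}, which your pointwise group-theoretic reduction would not directly deliver (though it is not needed for the present statement).

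There is, however, a genuine gap at the crux, and you flag it yourself without filling it: the estimate ``small $a$ with $q_U(x,a)\in\exp(V)$ forces $\sum_i a_i[X_i]_x\in V$'' is precisely the content of the paper's Lemma \ref{lem: af}, and it does \emph{not} follow ``exactly along the lines of'' Lemma \ref{lemama 123}. The group-case proof uses the ambient group $G$ twice in ways that have no verbatim analogue here. First, to approximate $\gamma=q_U(x,a)\in\exp(V)$ one must produce isotropy elements $\gamma_n\in\cH(\cF)_{x_n}^{x_n}$ at the varying regular points $x_n$ with $\gamma_n\to\gamma$; in Lemma \ref{lemama 123} this is done by factoring $\exp(X)=\exp(X_1)\cdots\exp(X_k)$ and approximating termwise inside the single group $G$, but there is no ambient group containing all the $\cH(\cF)_{x_n}^{x_n}$. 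The paper supplies this step by covering a leafwise path by finitely many bi-submersions and using continuity of the leaf distributions (Lemma \ref{lem:continuity of leaves}); in your group picture one would instead have to realize small isotropy elements concretely as $q_\cW(x_n,b)$ with $\sum_i b_iX_i(x_n)=0$ and check this is compatible with the Lie group structure of Theorem \ref{thm:AZ} --- an argument your sketch does not contain. Second, having $\gamma_n\to\gamma$ one must lift to periodic points $(x_n,a_n)\to(x,a)$ with $r(x_n,a_n)=x_n$ before Lemma \ref{lemma 2 gen} can be applied; this uses openness of $q_\cW$ (\cite[Corollary 3.12]{AS1}, Remark \ref{rem:open map}.2), which replaces the global $\log$-chart of the group case and is nowhere invoked by you. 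A further small omission: passing from the estimate to local closedness of $\exp(V)$ needs the converse inclusion, namely that the plaque through the unit fills the $V$-slice locally --- trivial in the group case ($\exp(V\cap B)\subseteq\exp(V)$), but here requiring a short dimension argument (the plaque lies in the slice by your estimate and has dimension $\dim V$, hence is open in it). All of these ingredients exist in the paper, so your route is completable and attractive, but as written the proof of part (1) is incomplete at its central point.
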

We remark that the quotient space is the same as $\hblup(\cF)_{(V,x)}$. We also have the parametrised version of Theorem \ref{main smoothness theorem}.
 \begin{theorem}\label{thm:closed leaves general}
Let $U$ be a bisubmersion, $u\in U$, $V\in\blup(\cF)_{s_U(u)}$, $S\subseteq U$ a smooth transversal of $\mathfrak{d}'s_U^{-1}(V)$ at $u$, in other words $$T_uS\oplus \mathfrak{d}'s_U^{-1}(V)_u=T_uU.$$ Then there exists $S',K$ open neighbourhoods of $u$ and $(V,x)$ in $S$ and $\blup(\cF)$ respectively such that the map $$\psi:S\times_{s_U,\pi_{\blup(\cF)}}K\to \hblup(\cF),\quad \psi(v,(W,s_U(v)))=\cQ(q_U(v),W)$$ satisfies the following 
\begin{enumerate}
\item $\psi$ is an open embedding.
\item the map $S'\cap U_y\to \hblup(\cF)_{(W,y)}$ which sends $v\in S'\cap U_y$ to $\cQ(q_U(v),W)$, is an open smooth embedding.
\end{enumerate}

%Then there exists $K\subseteq \blup(\cF)$ an open neighbourhood of $(V,x)$, $K''$ an open neighbourhood of $\gamma\in  \hblup(\cF)$ and an open embedding $$\psi:\dom(\psi)\subseteq K\times \R^{\dim(\cF_{x})-\dim(V)} \to K'' ,$$ where $\dom(\psi)\subseteq K\times \R^{\dim(\cF_{x})-\dim(V)}$ is an open neighbourhood of $((V,x),0)$ and \begin{itemize}
%\item $s_{\hblup(\cF)}\circ \psi=p_1$ where $p_1$ is the projection onto $K$.
%\item for any $(W,y)\in K$, the restriction of $\psi$ to $\dom(\psi)\cap \{(W,y)\}\times \R^{\dim(\cF_{x})-\dim(V)} $ is a a smooth open embedding into $\hblup(\cF)_{(W,y)}$.
%\end{itemize}
\end{theorem}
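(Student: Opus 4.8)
The plan is to manufacture explicit foliated charts for $\hblup(\cF)$ near $\cQ(q_U(u),V)$ out of the transversal $S$, treating first the fibrewise statement (Part 2) and then the parametrised statement (Part 1), uniform in $W$. The starting point is an explicit description of when two classes coincide: unwinding the quotient by the bundle of isotropy groups $N$ and using the leaf-translation formulas of Theorem \ref{thm:leaf decomp}.2, one checks that $\cQ(q_U(v),W)=\cQ(q_U(v'),W')$ holds precisely when $W=W'$ (so in particular $s_U(v)=s_U(v')=:y$) and $q_U(v')\in\cL_s(q_U(v),W)$. Thus the fibres of $\psi$ are governed entirely by the leaves $\cL_s(\cdot,W)$ on $\cH(\cF)_y$, and the whole argument reduces to comparing these leaves with the transversal $S$.

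The next step is to identify the foliation $\mathfrak{d}'s_U^{-1}(W)$ on $U_y$ as the $q_U$-pullback of the $\cL_s$-foliation on $\cH(\cF)_y$. Since $\cL_s(\gamma,0)=\{\gamma\}$ by Theorem \ref{thm:AZ}, the leaves of $\mathfrak{d}'s_U^{-1}(0)$ are exactly the connected fibres of $q_{U|U_y}$, so $\ker(dq_{U|U_y})=\mathfrak{d}'s_U^{-1}(0)\subseteq\mathfrak{d}'s_U^{-1}(W)$ for every $W$; together with Remark \ref{rem:tangent space of hx} and Lemma \ref{prop:f comm diag} this shows that $q_{U|U_y}$ maps each leaf $\cL_s(U,v,W)$ submersively onto $\cL_s(q_U(v),W)$ and that, locally, $q_{U|U_y}^{-1}\big(\cL_s(q_U(v),W)\big)=\cL_s(U,v,W)$. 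Finally, by continuity of the family of foliations (Lemma \ref{lem:continuity of leaves}) the open condition $T_vS\oplus\mathfrak{d}'s_U^{-1}(W)_v=T_vU$ persists on neighbourhoods $S',K$ of $u$ and $(V,x)$; restricting to a fibre, $S'\cap U_y$ is then a smooth transversal to $\mathfrak{d}'s_U^{-1}(W)$ inside $U_y$.

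For Part 2, fix $(W,y)\in K$. By Theorem \ref{main smoothness theorem} the leaf space $\cH(\cF)_y/\cL_s(\cdot,W)=\hblup(\cF)_{(W,y)}$ is a smooth manifold, and the map of the statement factors as $S'\cap U_y\xrightarrow{q_U}\cH(\cF)_y\to\cH(\cF)_y/\cL_s(\cdot,W)$. Because the foliation contains the $q_U$-fibres, transversality of $S'\cap U_y$ makes this composite a local diffeomorphism onto an open set, and the dimension count $\dim(S'\cap U_y)=\dim\cF_y-\dim W=\dim T\cF_{(W,y)}$ confirms the target dimension; shrinking $S'$ yields an open smooth embedding. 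For Part 1, $\psi$ is continuous as a composite of $q_U$, the inclusion into $\cH(\cF)\ltimes\blup(\cF)$, and $\cQ$. Injectivity follows from the fibre description: if $\psi(v,W)=\psi(v',W')$ then $W=W'$, $s_U(v)=s_U(v')=y$ and $q_U(v')\in\cL_s(q_U(v),W)$, so by the pullback identification $v'$ lies on the leaf $\cL_s(U,v,W)$; as $v,v'\in S'$ meet this leaf transversally they coincide. For openness I would pass to the map $\Psi(v,W)=\cQ(q_U(v),W)$ defined on all of $U\times_{s_U,\pi_{\blup(\cF)}}\blup(\cF)$, which is open since $q_U$ is open (Remark \ref{rem:open map}.2) and the quotient $\cQ$ by the free action of the group bundle $N$ is open; the image of $\psi$ equals the $\Psi$-image of a foliation-saturated open neighbourhood of $(u,V)$ (saturation being open by Lemma \ref{lem:continuity of leaves}), and the same saturation argument shows $\psi$ carries open sets to open sets. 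Hence $\psi$ is a continuous open bijection onto an open subset, i.e. an open embedding.

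The main obstacle is the openness half of Part 1 in the parametrised setting. Injectivity rests comfortably on Theorem \ref{main smoothness theorem} (the leaves $\cL_s(\gamma,W)$ are closed embedded submanifolds) through a standard plaque argument; but upgrading this to openness of $\psi$ requires \emph{both} the openness of the quotient map $\cQ$---that is, that the quotient by the bundle of isotropy groups $N$ is open, which in turn uses the continuous family structure so that the $N$-action map is open---\emph{and} a saturation argument that is uniform as $W$ varies. Both of these hinge on Lemma \ref{lem:continuity of leaves}, which is exactly what lets the foliation chart depend continuously on $(v,W)$ and makes the saturations of open sets open.
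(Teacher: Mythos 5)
Your architecture matches the paper's for everything except the one step that actually carries the difficulty. Continuity of $\psi$, openness (openness of $q_U$ by Remark \ref{rem:open map}.2 plus openness of the quotient map $\cQ$, which rests on Lemma \ref{lem:continuity of leaves} and Theorem \ref{thm:leaf decomp}.1), the persistence of transversality over neighbourhoods $S',K$, and Part 2 (smoothness via Theorem \ref{main smoothness theorem}, injectivity of the derivative via Remark \ref{rem:tangent space of hx}) all track the paper's proof. The gap is in your injectivity argument, at the sentence ``as $v,v'\in S'$ meet this leaf transversally they coincide.'' Transversality only makes $L\cap S'$ discrete, and closedness of the leaf (from Theorem \ref{main smoothness theorem}) only makes it closed in $S'$: neither prevents a leaf from \emph{returning} to an arbitrarily small transversal at positive distance from $u$ (think of a closed linear leaf of rational slope on a torus crossing a short transversal arc several times). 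Shrinking $S'$ kills the returns of any \emph{single} leaf, but injectivity of $\psi$ requires one-point intersection simultaneously for \emph{all} leaves of \emph{all} the foliations $\mathfrak{d}'s_U^{-1}(W)$ with $(W,y)\in K$; the set of leaves is not compact in any usable sense and the return distance can degenerate as $(W,y)$ varies, so no plaque or saturation argument, and no appeal to Lemma \ref{lem:continuity of leaves} alone, can close this. (A related soft spot: your identification $q_{U|U_y}^{-1}\bigl(\cL_s(q_U(v),W)\bigr)=\cL_s(U,v,W)$ is only local, yet injectivity needs it globally --- the preimage is in general a union of leaves.)

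What the paper does instead is quantitative, and this machinery is entirely absent from your proposal. It first proves a parametrised periodic bounding estimate (Lemma \ref{lemma 2 gen}, the foliation analogue of Lemma \ref{lemma 2}): on a compact set $K'=\pi_{\blup(\cF)}(\overline{K})$ there is $\eta>0$ such that $r(y,t)=y$ with $\norm{t}<\eta$ forces $\sum_i t_iX_i(y)=0$. Via approximation from the regular part this yields Lemma \ref{lem: af}: if $q_\cW(y,t)\in\cL_s(y,W)$ with $(W,y)\in K$, then $\sum_i t_i[X_i]_y\in W$ --- a uniform statement that a leaf cannot come back near the identity except inside $W$. Then, using the minimal bi-submersion $\cW$ and a local morphism $f:U^{-1}\circ U\to\cW$ with $f(u,u)=(x,0)$, the paper builds the family of maps $g_{(W,y)}(v,v')=\bigl(\pi_{\cF_x}(f(v,v'))\bmod \phi_y^{-1}(W),\,v'\bigr)$, checks $dg_{(V,x)}$ is injective at $(u,u)$, and uses compactness of $\overline{K}$ to choose one $S'$ on which every $g_{(W,y)}$ is injective. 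If now $\psi(v,W)=\psi(v',W)$, Theorem \ref{thm:leaf decomp} gives $q_\cW(f(v,v'))=q_U(v')^{-1}q_U(v)\in\cL_s(y,W)$, Lemma \ref{lem: af} gives $g_{(W,y)}(v,v')=(0,v')=g_{(W,y)}(v',v')$, hence $v=v'$. This is the missing uniform no-return mechanism; without an analogue of Lemma \ref{lemma 2 gen}/Lemma \ref{lem: af} your proof of Part 1 does not go through.
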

%We call $\psi(S\times_{s_U,\pi_{\blup(\cF)}}K)$ a local chart of $\hblup(\cF)$.
\begin{proof}[Proof of Theorem \ref{main smoothness theorem} and Theorem \ref{thm:closed leaves general}.]
Let $k=\dim(\cF_x)$, $X_1,\cdots,X_k$, $r:M\times \R^k\to M $, $\cW\subseteq M\times \R^k$, as Example \ref{exs:bisubmersions}.2, $\phi_y:\cF_x\to \cF_y$ the linear map  defined in Eqn. \eqref{dfn phi}. Let $K$ be a open neighbourhood of $(V,x)\in\blup(\cF)$ with $\overline{K}$ compact and $K'=\pi_{\blup(\cF)}(\overline{K})\subseteq M$. We remark that $K'$ is compact but is not necessarily a neighbourhood of $x$, see Remark \ref{rem:open}.
\begin{lem}\label{lemma 2 gen}There exists $\eta>0$ such that for any $(y,t)\in K'\times \R^k$ if $r(y,t)=y$ and $\norm{t}<\eta$, then $\sum_{i=1}^k t_i X_i(y)=0$.
\end{lem}
The proof is similar to the proof of Lemma \ref{lemma 2} and will be omitted. After possibly reducing $\cW$, we can suppose that $\cW\subseteq M\times B(0,\eta)$, where $B(0,\eta
)$ is the ball of radius $\eta$. %We remark that $\cL_s(\cW,(x,0),V)=(\{x\}\times V)\cap \cW$, where we identify $\R^k$ with $\cF_x$ using the basis $[X_1]_x,\cdots,[X_k]_x$.
\begin{lem}\label{lem: af}If $(W,y)\in K$, $(y,t)\in \cW$, $q_\cW(y,t)\in \cL_s(y,W)$, then $\sum_{i=1}^k t_i[X_i]_y\in W$. Equivalently $\sum_{i=1}^k t_i[X_i]_x\in \phi_y^{-1}(W)$.
\end{lem}
\begin{proof}
By definition of $\blup(\cF)$, there exists $y_n\in M_{\reg}$ such that $(0,y_n)\to (W,y)$. Since $K$ is open, we can suppose that $(0,y_n)\in K$ for all $n$. Hence $y_n \in K'$. One can find $\gamma_n\in \cL_s(x_n,0)$ which converge to $q_\cW(y,t)$. This is done by covering a smooth path from $q_\cW(y,t)$ to $y$ in $\cL_s(y,W)$ with a finite number of bi-submersions, then using Lemma \ref{lem:continuity of leaves} on each.  By Remark \ref{rem:open map}.2, it follows that there exists $t_n$ such that $(y_n,t_n)\in \cW$ and $(y_n,t_n)\to (y,t)$ and $q_{\cW}(y_n,t_n)=\gamma_n$. It follows that $r(y_n,t_n)=y_n$. By Lemma \ref{lemma 2 gen}, $\sum_{i=1}^k t_{ni} [X_i]_{x}\in \phi_{y_n}^{-1}(\mathfrak{h}_{x_n})$. Hence  $\sum_{i=1}^k t_i[X_i]_x\in \phi_y^{-1}(W)$.
\end{proof}
Lemma \ref{lem: af} and Theorem \ref{thm:leaf decomp} applied to $(V,x)$ immediately imply that $\cL_s(x,V)$ is closed and that all leaves are closed embedded submanifolds. By \cite[Section 5.9.5]{BourbakiDiffVariety}, the quotient is a smooth manifold. This finishes the proof of Theorem \ref{main smoothness theorem}.

For Theorem \ref{thm:closed leaves general}. After possibly reducing $K,S$, by Lemma \ref{lem:continuity of leaves}, we can suppose that if $(W,y)\in \overline{K}$ and $v\in S\cap U_y$, then $$T_vS\oplus \mathfrak{d}'_vs_{U}^{-1}(W)=T_vU.$$
 By the definition of the topology on $\hblup(\cF)$, the map $\psi$ is continuous.  It is open by Remark \ref{rem:open map}.2 and the following lemma. \begin{lem}The quotient map $\cQ:\cH(\cF)\ltimes \blup(\cF)\to \hblup(\cF)$ is open.
\end{lem}
\begin{proof}
This follows immediately from Lemma \ref{lem:continuity of leaves} and Theorem \ref{thm:leaf decomp}.1.
\end{proof}

% Let $S\subseteq \cF_x$ be a subspace which is transverse to $V$. We can reduce $K$ if necessary and suppose that for all $(W,y)\in \overline{K}$, $S$ and $\phi_y^{-1}(W)$ are transverse. We identify $\R^k$ with $\cF_x$ using the basis $[X_1]_x,\cdots,[X_k]_x$. Let $\cW_S=\cW\cap (M\times S)$, and  $$\psi:\cW_S\times_M K\to \hblup(\cF),\quad \psi((y,t),(W,y))=\cQ(q_\cW(y,t),(W,y))\in \hblup(\cF)_{(W,y)}.$$

 We will show that we can reduce $S$, so that $\psi$ becomes an injective. Consider the bi-submersion $U^{-1}\circ U$. Since $q_{U^{-1}\circ U}(u,u)=x$, it follows that there exists $f:U^{-1}\circ U\to \cW$ a local morphism of bi-submersions such that $f(u,u)=(x,0)$. For $(W,y)\in \overline{K}$, consider the map \begin{align*}
 g_{(W,y)}:\{(v,v')\in \dom(f):v,v'\in S\}\to \frac{\cF_x}{\phi_y^{-1}(W)}\times U\\ g_{(W,y)}(v,v')=(\pi_{\cF_x}(f(v,v'))\mod  W,v'),
\end{align*} where $\pi_{\cF_x}:\cW\to \cF_x$ is the projection onto the second coordinate. The differential of $g_{(V,x)}$ at $(u,u)$ is injective. This is deduced from Lemma \ref{lem: af}. Since $\overline{K}$ is compact, we can find an open neighbourhood $S'\subseteq S$ of $u$ such that $g_{(W,y)}$ restricted to $\{(v,v')\in \dom(f):v,v'\in S'\}$ is injective.

We claim that $\psi$ restricted to $S'\times_{s_U,\pi_{\blup(\cF)}}K$ is injective. To see this let $v,v'\in S'$ such that $\cL_s(q_U(v),W)=\cL_s(q_U(v'),W)$. By Theorem \ref{thm:leaf decomp}, $$q_{\cW}(f(v,v'))=q_{U}(v')^{-1}q_{U}(v)\in \cL_s(y,W).$$ By Lemma \ref{lem: af}, $\pi_{\cF_x}(f(v,v'))\in \phi_y^{-1}(W)$. Hence $g_{(W,y)}(v,v')=(0,v')$. By the same argument $g_{(W,y)}(v',v')=(0,v')$. Injectivity claim follows.

Finally  the map $S'\cap U_y\to \hblup(\cF)_{(W,y)}$ which sends $v\in S'\cap U_y$ to $\cQ(q_U(v),W)$, is a smooth by Theorem \ref{main smoothness theorem} and Remark \ref{rem:open map}.1. By Remark \ref{rem:tangent space of hx}, we deduce that the derivative is injective at every point in the domain by transversality of $S$ and $\mathfrak{d}'s_{U}^{-1}(W)$.\end{proof}
\begin{cor}The groupoid $\hblup(\cF)$ is a not necessarily Hausdorff continuous family groupoid in the sense of \cite{ContFamilyGroupoids}
\end{cor}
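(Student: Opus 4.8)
The plan is to verify directly the axioms of a continuous family groupoid from \cite{ContFamilyGroupoids}, using the local charts produced by Theorem \ref{thm:closed leaves general} as the continuous family atlas and the fiberwise smoothness of Theorem \ref{main smoothness theorem} to identify the longitudinal smooth structure. Recall that such a groupoid requires a locally compact, locally Hausdorff unit space, a cover of the total space by charts of the form (locally compact base) $\times$ (smooth fiber) compatible with the source map, and structure maps (source, range, inversion, multiplication) that are continuous families of smooth maps, i.e.\ smooth along the $s$-fibers with all longitudinal derivatives depending continuously on the transverse parameter.

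First I would record the unit space: $\blup(\cF)$ is locally compact and Hausdorff, as noted when its topology was defined, so $\hblup(\cF)^{(0)}=\blup(\cF)$ satisfies the base requirement. Then I would take the maps $\psi\colon S\times_{s_U,\pi_{\blup(\cF)}}K\to\hblup(\cF)$ of Theorem \ref{thm:closed leaves general} as the charts. By part 1 each $\psi$ is an open embedding, and letting $U$ range over all path-holonomy bi-submersions and $(V,x)$ over $\blup(\cF)$ these images cover $\hblup(\cF)$, since every class $\cQ(q_U(u),W)$ lies in such an image. The domain of $\psi$ fibers over the locally compact Hausdorff set $K\subseteq\blup(\cF)$ with smooth fibers $S\cap U_y$, and part 2 of Theorem \ref{thm:closed leaves general} says the induced fiber maps $S'\cap U_y\to\hblup(\cF)_{(W,y)}$ are open smooth embeddings into the smooth manifolds supplied by Theorem \ref{main smoothness theorem}. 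Thus each $\psi$ is exactly a continuous family chart and $s$ becomes a continuous family submersion with fiber $\hblup(\cF)_{(V,x)}$.

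Next I would check the remaining structure maps. The source map merely reads off the $K$-coordinate of a chart, hence is a continuous family submersion. For the range map I would use the symmetric foliation $\cL_r$ and the identity $\cL_s(\gamma,V)=\cL_r(\gamma,\gamma\cdot V)$ of Theorem \ref{thm:leaf decomp}.4, together with continuity of the $\cH(\cF)$-action on $\blup(\cF)$ (Theorem \ref{thm:action}), to present $r$ in analogous charts; inversion then descends from inversion on $\cH(\cF)\ltimes\blup(\cF)$ via the formula $\cL_s(\gamma,V)^{-1}=\cL_s(\gamma^{-1},\gamma\cdot V)$ of Theorem \ref{thm:leaf decomp}.2 and is seen to interchange the $\cL_s$- and $\cL_r$-charts by a continuous family diffeomorphism. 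Compatibility of the transition maps between two charts follows from the fiberwise smoothness of the submersions $q_{U|U_x}$ (Remark \ref{rem:open map}.1), functoriality of the blowup (Theorem \ref{thm:funct blowup}), and the transverse continuity furnished by Lemma \ref{lem:continuity of leaves}.

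The hard part will be the multiplication. It is induced from multiplication on $\cH(\cF)\ltimes\blup(\cF)$ and descends to $\hblup(\cF)$ by the compatibility $\gamma'\cL_s(\gamma,V)=\cL_s(\gamma'\gamma,V)$ and $\cL_s(\gamma,V)\gamma''=\cL_s(\gamma\gamma'',(\gamma'')^{-1}\cdot V)$ of Theorem \ref{thm:leaf decomp}.2, so set-theoretically it is well defined. To see it is a continuous family of smooth maps I would work in a product-of-bi-submersions chart modelled on $U\circ U'$ with transversals chosen as in Theorem \ref{thm:closed leaves general}: smoothness along the $s$-fibers is inherited from the smooth groupoid structure on each $\cH(\cF)_x$ and the smoothness of the quotient by $\cL_s$ (Theorem \ref{main smoothness theorem}.2), while the delicate point is that the longitudinal derivatives vary continuously as the base point $(V,x)$ moves. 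This last continuity I expect to extract from Lemma \ref{lem:continuity of leaves}, namely continuity of $(u,(V,x))\mapsto\mathfrak{d}'s_U^{-1}(V)_u$, combined with openness of the maps $q_U$ (Remark \ref{rem:open map}.2), which together let one track the fibered composition of the already-smooth chart maps continuously in the transverse direction. Once this is in hand, all the axioms of \cite{ContFamilyGroupoids} are met and $\hblup(\cF)$ is a (not necessarily Hausdorff) continuous family groupoid.
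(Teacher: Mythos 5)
Your proposal is correct and follows essentially the same route as the paper: the paper's own argument consists precisely of citing Theorem \ref{thm:closed leaves general} for the existence of local charts and then leaving the $C^{\infty,0}$-smoothness of product and inverse to the reader, with the hint that both come from $C^\infty$ morphisms of bi-submersions. Your treatment of multiplication via charts modelled on $U\circ U'$, with fiberwise smoothness from Theorem \ref{main smoothness theorem} and transverse continuity from Lemma \ref{lem:continuity of leaves}, is exactly that hint spelled out in detail.
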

The content of Theorem \ref{thm:closed leaves general} is that local charts in the sense of \cite{ContFamilyGroupoids} exist. We leave it to the reader to check that product and inverse on $\hblup(\cF)$ are $C^{\infty,0}$ smooth in the sense of \cite{ContFamilyGroupoids} (this follows from the fact that they both come from morphisms of bi-submersions which are $C^\infty$).

\begin{rems}[Smooth structure]\begin{enumerate}
\item Let $A\subseteq M$, a closed set, $N$ a smooth manifold, then we say $f:A\to N$ is a smooth if it admits a smooth extension to an open neighbourhood $U$ of $A$. Since $\blup(\cF)$ is locally a closed subset of $\Gr(\cF_x)\times M$, we can define $f:\blup(\cF)\to N$ to be smooth if its restriction to $\pi^{-1}(U)$ is smooth for any $X_1,\cdots,X_k$, $U\subseteq M$ as in Eqn. \eqref{eqn:top blowup}. One can easily check that different local charts of $\blup(\cF)$ are compatible, and so this notion of smoothness doesn't depend on the choice of $X_1,\cdots,X_k$, $U$. Furthermore by construction $ \pi:\blup(\cF)\to M$ is smooth and it is a diffeomorphism when restricted to $\pi^{-1}(M_{\reg})$.
\item If $\blup(\cF)$ is a smooth manifold, then the local charts in Theorem \ref{thm:closed leaves general} show that $\hblup(\cF)$ is a Lie groupoid.
\end{enumerate}
\end{rems}
\subsection{Integration}
Since $\hblup(\cF)$ is non Hausdorff in general, more care needs to be taken in defining its $C^*$-algebra. We refer the reader to \cite{SkandalisKohshkamReg,TimmermannFellComapct,TuNonHausdorff} for more details on the $C^*$-algebra of locally compact non Hausdorff groupoids.

Let $U$ be a bi-submersion. We define $\left(\Omega^\frac{1}{2}U\right)_u=|\Lambda|^\frac{1}{2}\ker(ds)_u\otimes |\Lambda|^\frac{1}{2}\ker(dr_U)_u$, see \cite[Section 4]{AS1}. Here $|\Lambda|^\frac{1}{2}$ denotes the bundle of $\frac{1}{2}$ densities. Let $u\in U,V\in \blup(\cF)_{s_U(u)}$. By proposition \ref{prop:image hx}, the following is an exact sequence $$0\to \mathfrak{d}'s_U^{-1}(V)_u\to \ker(dr_U)_u\xrightarrow{\mathfrak{d}s_U} \frac{\cF_{s_U(u)}}{V}\to 0.$$ It follows that we have a canonical isomorphism $$|\Lambda|^\frac{1}{2}\frac{\cF_{s_U(u)}}{V}\otimes |\Lambda|^\frac{1}{2}\mathfrak{d}'s_U^{-1}(V)_u\simeq |\Lambda|^\frac{1}{2}\ker(dr_U)_u.$$We also have the short exact sequence $$0\to \mathfrak{d}'s_U^{-1}(V)_u\to \ker(ds_U)_u\xrightarrow{\mathfrak{d}r_U} \frac{\cF_{r_U(u)}}{u\cdot V}\to 0.$$ Hence a canonical isomorphism $$|\Lambda|^\frac{1}{2} \frac{\cF_{r_U(u)}}{u\cdot V} \otimes |\Lambda|^\frac{1}{2}\mathfrak{d}'s_U^{-1}(V)_u\simeq |\Lambda|^\frac{1}{2}\ker(ds_U)_u.$$ Taking the tensor product of the two isomorphisms we get $$   |\Lambda|^\frac{1}{2}\frac{\cF_{s_U(u)}}{V} \otimes |\Lambda|^\frac{1}{2} \frac{\cF_{r_U(u)}}{u\cdot V}\otimes |\Lambda|^1\mathfrak{d}'s_U^{-1}(V)_u\simeq \left(\Omega^\frac{1}{2}U\right)_u.$$
Let $T\cF$ be the Lie algebroid of $\hblup(\cF)$. The last isomorphism can be thus rewritten as $$  |\Lambda|^\frac{1}{2}T\cF_{(V,s_U(u))}\otimes |\Lambda|^\frac{1}{2} T\cF_{(u\cdot V,r_U(u))} \otimes |\Lambda|^1\mathfrak{d}'s_U^{-1}(V)_u\simeq \left(\Omega^\frac{1}{2}U\right)_u.$$
We define $\Omega^\frac{1}{2} \hblup(\cF)$ to be the vector bundle \begin{align*}
 \Omega^\frac{1}{2} \hblup(\cF):&=|\Lambda|^\frac{1}{2}s_{\hblup(\cF)}^*T\cF\otimes |\Lambda|^\frac{1}{2}r_{\hblup(\cF)}^*T\cF
\end{align*}
Let $f\in \Gamma_c(U,\Omega^\frac{1}{2}U)$, $u\in U$. Then since the tangent bundle of $\cL_s(U,u,V)$ is equal to $|\Lambda|^1\mathfrak{d}'s_U^{-1}(V)$, we get that the following integral is well defined $$\int_{\cL_s(U,u,V)}f\in \left(\Omega^\frac{1}{2} \hblup(\cF)\right)_{(q_U(u),(V,x))}.$$
Let $\mathcal{A}$ be the algebra defined in \cite[Section 4.3]{AS1}. We have thus defined a map $$\fint:\cA\to \Gamma_c(\hblup(\cF),\Omega^\frac{1}{2}\hblup(\cF)).$$ It is straightforward to check that this map is a $*$-algebra homomorphism. Hence it passes to the completion $$\fint :C^*_{\max}\cF\to C^*_{\max}\hblup(\cF).$$
\section{Half-continuous fields of $C^*$-algebras and $z$-completion}\label{sec:semicont fields}
\subsection{Half-continuous fields} 
Let $X$ be a locally compact Hausdorff space, $A$ a $C_0(X)$-$C^*$-algebra in the sense of Kasparov \cite{KasparovInvent}, see also \cite{WilliamsCrossedBook,BlanchardHopf}. By \cite[Proposition C.5]{WilliamsCrossedBook}, $\hat{A}=\sqcup_{x\in X}\hat{A}_x$ (as a set not a topological space) and the projection map $\hat{A}\to X$ is continuous.
\begin{prop}\label{prop:semi cont}Let $x\in X$ be a non isolated point. The following conditions are equivalent \begin{enumerate}
\item for any $a\in A$, one has $$\limsup_{y\to x}\norm{a_y}=\norm{a_x}.$$
\item for any $a\in A$, if $a_y=0$ for all $y\in X\backslash \{x\}$, then $a=0$.
\item The set $\sqcup_{y\in X\backslash\{x\}}\hat{A}_y$ is dense in $\hat{A}$.
\end{enumerate}
\end{prop}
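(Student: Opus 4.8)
The plan is to establish all three equivalences through the chain $1\Rightarrow 2$, $2\Leftrightarrow 3$, and $2\Rightarrow 1$, leaning on three standard facts about a $C_0(X)$-algebra $A$ (see \cite{WilliamsCrossedBook}): the fiber norm $y\mapsto\norm{a_y}$ is upper semicontinuous, so $\limsup_{y\to x}\norm{a_y}\le\norm{a_x}$ \emph{always} holds; the evaluations $\ev_y\colon A\to A_y$ are jointly faithful, i.e. $\bigcap_{y\in X}\ker(\ev_y)=0$; and the module structure is compatible with the fibration, $\ev_y(\chi a)=\chi(y)\,\ev_y(a)$ for $\chi\in C_0(X)$. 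I write $J_y:=\ker(\ev_y)$, so $A_y=A/J_y$ and $\hat A_y=\widehat{A_y}=\{[\pi]\in\hat A:\ker\pi\supseteq J_y\}$, and I read $\limsup_{y\to x}\norm{a_y}$ as $\inf_{U\ni x}\sup_{y\in U\setminus\{x\}}\norm{a_y}$ over open neighbourhoods $U$ of $x$ (well defined since $x$ is non-isolated).

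For $2\Leftrightarrow 3$ I would argue purely with the Jacobson (hull--kernel) topology. Since a $C^*$-algebra has zero Jacobson radical, a subset $S\subseteq\hat A$ is dense precisely when $\bigcap_{[\pi]\in S}\ker\pi=0$. Because each fiber $A_y$ is again a $C^*$-algebra, the common kernel of the representations in a single $\hat A_y$ is exactly $J_y$; taking the union over $y\ne x$ gives $\bigcap_{[\pi]\in\sqcup_{y\ne x}\hat A_y}\ker\pi=\bigcap_{y\ne x}J_y$. Hence $\sqcup_{y\ne x}\hat A_y$ is dense iff $\bigcap_{y\ne x}J_y=0$, which is a verbatim restatement of condition $2$ (an element lies in $\bigcap_{y\ne x}J_y$ exactly when $a_y=0$ for all $y\ne x$).

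The implication $1\Rightarrow 2$ is immediate: if $a_y=0$ for every $y\ne x$ then $\sup_{y\in U\setminus\{x\}}\norm{a_y}=0$ for every $U$, so $\limsup_{y\to x}\norm{a_y}=0$, and condition $1$ forces $\norm{a_x}=0$; thus $a_y=0$ for all $y\in X$ and joint faithfulness gives $a=0$.

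The substantive direction is $2\Rightarrow 1$, and this is where I expect the real work. Assuming $\bigcap_{y\ne x}J_y=0$ and fixing $a$, upper semicontinuity reduces the claim to ruling out a strict upward jump $\norm{a_x}>\gamma:=\limsup_{y\to x}\norm{a_y}$; pick $\gamma'$ with $\gamma<\gamma'<\norm{a_x}$ and pass to $b:=a^*a\ge 0$, so that $\norm{b_x}>\gamma'^2$ while $\limsup_{y\to x}\norm{b_y}=\gamma^2<\gamma'^2$. The idea is to manufacture a nonzero element of $\bigcap_{y\ne x}J_y$, contradicting condition $2$. First I would apply continuous functional calculus with an increasing $g\colon[0,\infty)\to[0,\infty)$ satisfying $g\equiv 0$ on $[0,\gamma'^2]$ and $g>0$ on $(\gamma'^2,\infty)$: since $\ev_y$ is a $*$-homomorphism and $g(0)=0$, one has $\ev_y(g(b))=g(b_y)$ and $\norm{g(b)_y}=g(\norm{b_y})$, so $g(b)_y=0$ for all $y$ in a punctured neighbourhood $U\setminus\{x\}$ (where $\norm{b_y}<\gamma'^2$) while $\norm{g(b)_x}=g(\norm{b_x})>0$. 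To kill the fibers away from $x$ as well, I would multiply by a bump function $\chi\in C_0(X)$ with $0\le\chi\le1$, $\chi(x)=1$ and $\supp\chi\subseteq U$: then $\ev_y(\chi\,g(b))=\chi(y)g(b)_y=0$ for every $y\ne x$ (using $\chi=0$ off $U$ and $g(b)_y=0$ on $U\setminus\{x\}$), yet $\ev_x(\chi\,g(b))=g(b)_x\ne 0$, producing a nonzero member of $\bigcap_{y\ne x}J_y$. This contradiction gives $\norm{a_x}\le\gamma$ and hence condition $1$. The only delicate points are verifying that the $x$-fiber $g(b)_x$ genuinely survives the cutoff (guaranteed by $\norm{b_x}>\gamma'^2$ together with $g>0$ there) and that the $\limsup$ supplies an honest open $U$ on which the functional-calculus threshold is uniformly effective.
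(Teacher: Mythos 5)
Your proposal is correct, and for $1\Rightarrow 2$ and $2\Leftrightarrow 3$ it matches the paper: the paper derives $1\Rightarrow 2$ from \cite[Proposition C.10]{WilliamsCrossedBook} (upper semicontinuity of $y\mapsto\norm{a_y}$ together with $\norm{a}=\sup_y\norm{a_y}$), exactly as you do, and it dismisses $2\Leftrightarrow 3$ as ``the definition of the Jacobson topology'' --- your hull--kernel computation ($S\subseteq\hat A$ dense iff $\bigcap_{[\pi]\in S}\ker\pi=0$, and the common kernel of $\hat A_y$ is $\ker(\ev_y)$) simply spells this out. The genuine divergence is in the substantive direction $2\Rightarrow 1$. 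The paper commutativizes: after replacing $a$ by $a^*a$, it takes the $C^*$-subalgebra generated by $\{fa:f\in C_0(X)\}$, which is a commutative $C_0(X)$-algebra, identifies it with $C_0(Y)$ for some locally compact $Y$ with a continuous map $\pi:Y\to X$, observes that condition $2$ forces $\pi^{-1}(x)\subseteq\overline{\pi^{-1}(X\setminus\{x\})}$, and reads off the limsup equality from this point-set density statement. You instead stay inside the noncommutative algebra and prove the contrapositive directly: a strict jump $\norm{a_x}>\limsup_{y\to x}\norm{a_y}$ lets you cut off with functional calculus ($g$ vanishing on $[0,\gamma'^2]$, positive above, $g(0)=0$ so that $\ev_y(g(b))=g(b_y)$) and a Urysohn bump $\chi$ supported in the neighbourhood supplied by the limsup, producing a nonzero element of $\bigcap_{y\neq x}\ker(\ev_y)$ --- contradicting $2$. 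All the delicate points you flag do check out: $g$ nondecreasing and nonnegative gives $\norm{g(b_y)}=g(\norm{b_y})$ since $\norm{b_y}\in\sigma(b_y)$, and $X$ locally compact Hausdorff supplies $\chi$. Your element $\chi\, g(b)$ is in effect the noncommutative shadow of the Urysohn function the paper would use on $Y$ to contradict density of $\pi^{-1}(X\setminus\{x\})$. What each approach buys: the paper's Gelfand reduction is shorter once one grants the structure theory of commutative $C_0(X)$-algebras, but it silently uses that fiber norms computed in the subalgebra agree with those computed in $A$; your argument avoids introducing $Y$ and that compatibility issue altogether, is quantitatively explicit about the threshold $\gamma'$, and so is arguably more self-contained, at the cost of being slightly longer.
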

\begin{proof}
The implication $1\Rightarrow 2$ follows from  By \cite[Proposition C.10.a]{WilliamsCrossedBook}. For the converse, let $a\in A$. We can suppose that $a$ is self-adjoint by replacing $a$ with $a^*a$. Consider the $C^*$-subalgebra generated by $fa$ for $f\in C_0(X)$. This is a commutative $C_0(X)$-$C^*$-algebra. Hence it is isomorphic to $C_0(Y)$ for some locally compact space $Y$ equipped with a continuous map $\pi:Y\to X$. Part $2$ implies that if $f\in C_0(Y)$ is equal to $0$ on $\pi^{-1}(X\backslash \{x\})$, then $f=0$. Hence $\pi^{-1}(x)\subseteq \overline{\pi^{-1}(X\backslash \{x\})}$. Let $f\in C_0(X)$. Then $$\norm{f_{|\pi^{-1}(x)}}\leq \limsup_{y\to x}\norm{f_{|\pi^{-1}(y)}}.$$ The inequality $$\norm{f_{|\pi^{-1}(x)}}\geq \limsup_{y\to x}\norm{f_{|\pi^{-1}(y)}}$$ is straightforward to check. The equivalence of $2$ and $3$ is by the definition of the Jacobson topology on $\hat{A}$.
\end{proof}
We say that $A$ is half-continuous at $x$ if either $x$ is isolated or $A$ satisfies the conditions of Proposition \ref{prop:semi cont}. We say that $A$ is half-continuous if it is half-continuous at every point $x\in X$.
\begin{ex}The $C^*$-algebra $C([0,\frac{1}{2}])$ is a half-continuous $C([0,1])$-$C^*$-algebra but it is not a continuous $C([0,1])$-$C^*$-algebra.
\end{ex}
By Proposition \ref{prop:semi cont}, if $x_0\in X$ is non isolated and $I_{x_0}:=\{a\in A:a_y=0\forall y\in X\backslash \{x\}\}$, then $A/I_{x_0}$ is the maximal half-continuous at $x$ quotient of $A$. 
\paragraph{Fibered groupoids.} Let $G$ be a locally compact Hausdorff groupoid, $\lambda$ a continuous family of Haar measures on the $s$-fibers, see \cite[Def. I.2.2]{RenaultBook}. Let $p:G^0\to X$ be a continuous map such that $p\circ s=p\circ r$. It follows that for $x\in X$, $G_{p^{-1}(x)}=G_{p^{-1}(x)}^{p^{-1}(x)}$ is a subgroupoid of $G$. 
 
The $C^*$-algebras $C^*G$, $C^*_rG$ and $C_0(G^0)$ are $C_0(X)$-$C^*$-algebras. The following Proposition is an adaptation of \cite[Theorem 5.6]{LandsmanRamazanContinuousField} to half-continuous fields.
 \begin{theorem}\label{thm:cont field}Let $x\in X$.\begin{enumerate}
 \item The fiber at $x\in X$ of $C^*G$ is equal to $C^*G_{p^{-1}(x)}$.
% \item If $C_0(G^0)$ is half-continuous, then $C^*_r(G,\lambda)$ is also half-continuous.
 \item If $C_0(G^0)$ is half-continuous at $x$, then $C^*_rG$ is half-continuous at $x$.
 \item If $G_{p^{-1}(x)}$ is amenable, then the fiber at $x\in X$ of $C^*_rG$ is equal to $C^*_rG_{p^{-1}(x)}$.
 \item If $G_{p^{-1}(x)}$ is amenable and $C^*_rG$ is half-continuous at $x$, then $C^*G$ is half-continuous at $x$.
 \end{enumerate}
 \end{theorem}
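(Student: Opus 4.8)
Throughout I will write $A_y=A/\overline{C_0(X\setminus\{y\})A}$ for the fibre of a $C_0(X)$-algebra $A$ at $y$, with quotient map $a\mapsto a_y$, and recall that $\|a\|=\sup_{y\in X}\|a_y\|$. For $u\in G^0$ let $\lambda_u$ be the regular representation of $C^*G$ on $L^2(G_u)$, so that $\|a\|_r=\sup_{u\in G^0}\|\lambda_u(a)\|$; note $G_{p^{-1}(x)}$ is a locally compact groupoid for the restricted Haar system. The structural fact used everywhere is that arrows stay in one $p$-fibre: as $p\circ s=p\circ r$, for $u\in p^{-1}(y)$ the representation $\lambda_u$ only involves $G_u\subseteq G_{p^{-1}(y)}$, and a direct computation (for $f\in C_0(X\setminus\{y\})$ one has $f\circ p\circ s\equiv f(y)=0$ on $G_u$) shows $\lambda_u$ annihilates $\overline{C_0(X\setminus\{y\})C^*_rG}$; hence it factors through $(C^*_rG)_y$ and restricts to the regular representation of $G_{p^{-1}(y)}$ at $u$. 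The same computation on $C_c(G)$ shows that restriction of functions extends to surjective $C_0(X)$-morphisms $C^*G\to C^*G_{p^{-1}(y)}$ and $C^*_rG\to C^*_rG_{p^{-1}(y)}$ (boundedness because representations of the fibre groupoid pull back), killing the defining ideals and thus factoring through $(C^*G)_y$ and $(C^*_rG)_y$.

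For (1) I would follow \cite{LandsmanRamazanContinuousField} and disintegrate. The fibre $(C^*G)_x$ is the quotient of $C^*G$ on which $C_0(X)$ acts by $f\mapsto f(x)$; by the disintegration theory of \cite{RenaultBook}, a representation of $C^*G$ with this $C_0(X)$-action is induced from a measure supported in $p^{-1}(x)$, hence is exactly a representation of $G_{p^{-1}(x)}$. Matching universal properties identifies $(C^*G)_x$ with $C^*G_{p^{-1}(x)}$.

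Part (2) is where the passage from continuous to half-continuous fields enters and is the main point. By Proposition \ref{prop:semi cont} it suffices to verify condition 2: if $a\in C^*_rG$ has $a_y=0$ for all $y\neq x$, then $a=0$. First, $C_0(G^0)$ is half-continuous at $x$ precisely when $G^0\setminus p^{-1}(x)=p^{-1}(X\setminus\{x\})$ is dense in $G^0$ (this is condition 3 of Proposition \ref{prop:semi cont} for the commutative algebra $C_0(G^0)$, whose spectrum is $G^0$). Next, for $u\in p^{-1}(y)$ with $y\neq x$ the factorisation of $\lambda_u$ through $(C^*_rG)_y$ forces $\lambda_u(a)=0$; hence $\lambda_u(a)=0$ on the dense set $G^0\setminus p^{-1}(x)$. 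The analytic input I expect to be the only delicate step is that $u\mapsto\|\lambda_u(a)\|$ is lower semicontinuous on $G^0$: for $a\in C_c(G)$ this holds because the matrix coefficients $u\mapsto\langle\lambda_u(a)\xi_u,\eta_u\rangle$ against continuous sections are continuous, so the norm is a supremum of continuous functions, and the general case follows by uniform approximation. A nonnegative lower semicontinuous function vanishing on a dense set vanishes identically, so $\lambda_u(a)=0$ for all $u$ and $\|a\|_r=0$.

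Parts (3) and (4) are then formal. For (3) the canonical maps assemble, via part (1), into
\begin{equation*}
C^*G_{p^{-1}(x)}\xrightarrow{\ \alpha\ }(C^*_rG)_x\xrightarrow{\ \bar\rho_x\ }C^*_rG_{p^{-1}(x)},
\end{equation*}
where $\alpha$ is induced on fibres by $C^*G\to C^*_rG$ and $\bar\rho_x$ is the factorisation of the restriction morphism. Both are surjective and on $C_c(G_{p^{-1}(x)})$ their composite is the canonical map $C^*G_{p^{-1}(x)}\to C^*_rG_{p^{-1}(x)}$, an isomorphism since $G_{p^{-1}(x)}$ is amenable; two surjections with isomorphic composite are isomorphisms, so $\bar\rho_x:(C^*_rG)_x\xrightarrow{\sim}C^*_rG_{p^{-1}(x)}$. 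For (4), let $\pi:C^*G\to C^*_rG$ be the canonical surjection and suppose $a_y=0$ for all $y\neq x$. Then $\pi(a)_y=0$ for all $y\neq x$, so half-continuity of $C^*_rG$ at $x$ gives $\pi(a)=0$, whence $\pi_x(a_x)=0$. By part (3) the fibre map $\pi_x$ is injective (composing with $\bar\rho_x$ it is the canonical isomorphism $C^*G_{p^{-1}(x)}\cong C^*_rG_{p^{-1}(x)}$ afforded by amenability), so $a_x=0$. Thus $a_y=0$ for every $y\in X$ and $\|a\|=\sup_y\|a_y\|=0$, i.e.\ $a=0$; by Proposition \ref{prop:semi cont}, $C^*G$ is half-continuous at $x$.
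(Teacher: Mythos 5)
Your proposal is correct, and for parts (1) and (3) it is essentially the paper's own argument: injectivity via Renault's disintegration theorem and surjectivity via dense image in (1); and your two-surjections trick in (3), built on the restriction morphism $(C^*_rG)_x\to C^*_rG_{p^{-1}(x)}$, is a repackaging of the paper's remark that the regular representations $\lambda_u$, $u\in p^{-1}(x)$, factor through $(C^*_rG)_x$. Parts (2) and (4) take genuinely different, though equivalent, routes. In (2) the paper avoids your lower-semicontinuity lemma for $u\mapsto\norm{\lambda_u(a)}$: it works with the Hilbert $C_0(G^0)$-module $E=\overline{C_c(G)}$ and the injective map $C^*_rG\to \cL(E)$; if $a_y=0$ for all $y\neq x$, then for each $\xi\in E$ the function $\langle a\star\xi,a\star\xi\rangle\in C_0(G^0)$ vanishes on $p^{-1}(X\setminus\{x\})$ and is killed by half-continuity of $C_0(G^0)$, whence $a\star\xi=0$ and $a=0$. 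Since $\langle a\star\xi,a\star\xi\rangle(u)=\norm{\lambda_u(a)\xi_{|G_u}}^2$, this is your argument in disguise, with the advantage that continuity of these scalar functions is automatic (they lie in $C_0(G^0)$), so only the matrix-coefficient continuity you invoke is needed, not the full lower semicontinuity of the operator norm (your sketch of the latter is nevertheless a correct standard fact). In (4) the paper verifies condition 1 of Proposition \ref{prop:semi cont} by the norm chain $\norm{a_{|G_{p^{-1}(x)}}}_{C^*G_{p^{-1}(x)}}=\norm{a_x}_{(C^*_rG)_x}=\limsup_{y\to x}\norm{a_y}_{(C^*_rG)_y}\leq\limsup_{y\to x}\norm{a_y}_{(C^*G)_y}\leq\norm{a_x}_{(C^*G)_x}=\norm{a_x}_{C^*G_{p^{-1}(x)}}$ for $a\in C_c(G)$ (plus a routine density argument), whereas you verify condition 2 directly, using injectivity of the fibre map $(C^*G)_x\to(C^*_rG)_x$ coming from amenability and the identity $\norm{a}=\sup_y\norm{a_y}$; the two verifications rest on the same inputs and are interchangeable by the equivalence in Proposition \ref{prop:semi cont}, yours being slightly slicker and the paper's yielding the norm statement explicitly. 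One pedantic point: that equivalence, and your characterization in (2) of half-continuity of $C_0(G^0)$ at $x$ as density of $p^{-1}(X\setminus\{x\})$ in $G^0$, are valid only at non-isolated $x$; the isolated case is trivial by the definition of half-continuity, and you should say so explicitly, as the paper does in its proof of (2).
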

 \begin{proof}
\begin{enumerate}
\item Consider the map $C_c(G)\to C^*G\to (C^*G)_x$, where the last map is the map taking the fiber of $C^*G$ at $x$. This map only depends on the restriction of $f\in C_c(G)$ to $C_c(G_{p^{-1}(x)})$. We thus get a map $C_c(G_{p^{-1}(x)})\to \left(C^*G\right)_x$. By definition of the maximal $C^*$-algebra, this extend to $*$-homomorphism $C^*G_{p^{-1}(x)}$, and we thus get \begin{align}
C^*G_{p^{-1}(x)}\to (C^*G)_x\label{eqn:max}
\end{align} It is surjective because the image was dense before extending to $C^*G_{p^{-1}(x)}$. The map \eqref{eqn:max} is injective by Renault's disintegration theorem \cite{RenaultBook}. %The map \eqref{eqn:red} is injective, because regular representations on $G_y$ for $y\in p^{-1}(x)$ factor through the map  For the reduce $C^*$-algebra, there is an obvious map like before which is obtained by extending functions $C_c(G_{p^{-1}(x)})\to C^*_r(G,\lambda)_x$. 
\item 

Let $E$ be the $C_0(G^0)$-$C^*$-module which is obtained as the closure of $C_c(G)$ with respect to the inner product $$\langle f,g\rangle (y)=f^*\star g(y)=\int_{G_x} \bar{f}(\gamma)g(\gamma)d\lambda_x(\gamma),\quad y\in G^0.$$ Furthermore one has an injective $*$-homomorphism $C^*_rG\to \mathcal{L}(E)$.

Suppose $x$ isn't isolated. Let $a\in C^*_r(G)$ such that $a_y=0$ for all $y\in X\backslash\{x\}$. For $\xi\in E$, one has $$\langle a\star \xi,a\star \xi\rangle(y)=0,\quad \forall y\in p^{-1}(X\backslash \{x\}).$$ Since $C_0(G^0)$ is half-continuous at $x$, it follows that $\langle a\star \xi,a\star \xi\rangle=0$. Hence $a\star \xi =0$. Since this holds for all $\xi \in E$, it follows that $a=0$.
\item Since $G_{p^{-1}(x)}$ is amenable, by \cite{AnaRenaultGrps}, $C^*G_{p^{-1}(x)}=C^*_rG_{p^{-1}(x)}$. By \eqref{eqn:max}, one has \begin{align*}
C^*_rG_{p^{-1}(x)}=C^*G_{p^{-1}(x)}\to (C^*G)_x\to (C^*_rG)_x
\end{align*}
This map is surjective because it clearly has dense image. It is injective because the left regular representation on $L^2G_y$ for $y\in p^{-1}(x)$ factors through $(C^*_rG)_x$.
\item Let $a\in C_c(G)$. By the previous parts, one has \begin{align*}
 \norm{a_{|G_{p^{-1}(x)}}}_{C^*G_{p^{-1}(x)}}=\norm{a_x}_{(C^*_rG)_x}=\limsup_{y\to x}\norm{a_y}_{(C^*_rG)_y}&\leq \limsup_{y\to x}\norm{a_y}_{(C^*G)_y}\\ &\leq \norm{a_x}_{(C^*G)_x}=\norm{a_x}_{C^*G_{p^{-1}(x)}},
\end{align*}
where we used \cite[Proposition C.10.a]{WilliamsCrossedBook} in the last inequality. The result follows.\qedhere
\end{enumerate}
\end{proof}
\begin{rems}
\begin{enumerate}
\item In general the fiber of $C^*_rG$ at $x$ is a quotient of $C^*G_{p^{-1}(x)}$ but not equal to $C^*_rG_{p^{-1}(x)}$. For example if $G^0=X=\{\frac{1}{n}:n\in \N\}\sqcup\{0\}$ with $G_{\frac{1}{n}}=SL(2,\frac{\Z}{n\Z})$ and $G_{0}=SL(2,\Z)$, see \cite[Section 2]{BCcounter}. %Notice that in this case, $C_0(G^0)$ is half-continuous but $C^*G$ isn't half-continuous at $0$.
\item Theorem \ref{thm:cont field} is also true for locally compact locally Hausdorff groupoids with $G^0$ Hausdorff. Part 1, one uses Renault's disintegration theorem for non Hausdorff groupoids \cite[Theorem 7.8]{MuhlyWilliamsRenaultTheoremsNonHaus}. In part 2, one replaces the module $E$ with the module constructed in \cite{SkandalisKohshkamReg}. In this case, one constructs a $C_0(Y)$-$C^*$-module $E$ with the proeprty that $C^*_rG\to \cL(E)$ is injective. The space $Y$ is constructed in \cite[the paragraph preceding Proposition 2.6]{SkandalisKohshkamReg}. Semi-continuity of $C_0(G^0)$ over $X$ together with \cite[Proposition 2.6]{SkandalisKohshkamReg} imply semi-continuity of $C_0(Y)$ over $X$.
\end{enumerate} 
\end{rems}
\paragraph{Fibered foliations.}Let $M,Y$ be smooth manifolds, $p:M\to Y$ a surjective submersion, $\cF\subseteq \Gamma(\ker(dp))$ a singular foliation. The space $\blup(\cF)$ is fibered over $Y$ through the map $p\circ \pi_{\blup(\cF)}$. Since $\cF$ is longitudinal to $\ker(p)$, $\cH(\cF)$ and $\hblup(\cF)$ are fibered over $Y$.\footnote{Here we make use of the assumption that we only consider path holonomy bi-submersions. For a path holonomy bi-submersion $U$, one has $p\circ s_U=p\circ r_U$.} \begin{cor}\label{cor:semicont}The $C^*$-algebra $C^*_r\hblup(\cF)$ is a half-continuous $C_0(Y)$-$C^*$-algebra. \end{cor}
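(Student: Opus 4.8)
The plan is to invoke Theorem~\ref{thm:cont field}.2 for the groupoid $G=\hblup(\cF)$ with unit space $G^0=\blup(\cF)$, base $X=Y$, and fibering map $p\circ\pi_{\blup(\cF)}$. Since $\hblup(\cF)$ is locally compact and locally Hausdorff while its unit space $\blup(\cF)$ is Hausdorff, the non-Hausdorff version of Theorem~\ref{thm:cont field} recorded in the remark that follows it applies verbatim. Thus everything reduces to checking the single hypothesis of part~2: that the commutative $C_0(Y)$-$C^*$-algebra $C_0(\blup(\cF))$ is half-continuous at every $y\in Y$. Half-continuity of $C^*_r\hblup(\cF)$ at each such $y$, and hence globally, then follows.

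If $y$ is isolated in $Y$ there is nothing to check, so fix a non-isolated $y$. The spectrum of $C_0(\blup(\cF))$ is $\blup(\cF)$, and its fiber over $y$ is the closed set $(p\circ\pi_{\blup(\cF)})^{-1}(y)=\pi_{\blup(\cF)}^{-1}(p^{-1}(y))$. By condition~3 of Proposition~\ref{prop:semi cont}, half-continuity at $y$ is equivalent to the density in $\blup(\cF)$ of the complement $\blup(\cF)\setminus\pi_{\blup(\cF)}^{-1}(p^{-1}(y))$, so this density is what I would establish.

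I would deduce it from two inputs. First, by the construction of the topology in~\eqref{eqn:top blowup} (equivalently Theorem~\ref{intro thm}.1), the map $\pi_{\blup(\cF)}$ restricts to a homeomorphism of $\pi_{\blup(\cF)}^{-1}(M_{\reg})$ onto the open dense set $M_{\reg}$; moreover, since by definition every $(V,x)\in\blup(\cF)$ is the limit of points $(0,x_n)$ with $x_n\in M_{\reg}$, the regular part $\pi_{\blup(\cF)}^{-1}(M_{\reg})$ is dense in $\blup(\cF)$. Second, as $p$ is a submersion and $y$ is non-isolated, the fiber $p^{-1}(y)$ is a submanifold of $M$ of positive codimension, hence nowhere dense, so $M_{\reg}\setminus p^{-1}(y)$ is dense in the open set $M_{\reg}$. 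Transporting this last density through the homeomorphism over $M_{\reg}$ shows that $\pi_{\blup(\cF)}^{-1}(M_{\reg}\setminus p^{-1}(y))$ is dense in $\pi_{\blup(\cF)}^{-1}(M_{\reg})$, and by transitivity of density dense in $\blup(\cF)$. As this set lies inside $\blup(\cF)\setminus\pi_{\blup(\cF)}^{-1}(p^{-1}(y))$, the required density follows.

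The reasoning is soft and topological; the one step needing genuine care, and which I regard as the main (if minor) obstacle, is the density of the regular part $\pi_{\blup(\cF)}^{-1}(M_{\reg})$ in $\blup(\cF)$, which has to be extracted directly from the definition of $\blup(\cF)$ as a closure of the regular locus. The remaining ingredients, namely identifying the fibers of the commutative algebra, the nowhere-density of the submersion fiber, and the application of Theorem~\ref{thm:cont field}.2 in its locally Hausdorff form, are then routine.
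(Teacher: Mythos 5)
Your proof is correct and takes essentially the same route as the paper's: density of $\pi_{\blup(\cF)}^{-1}(M_{\reg})$ in $\blup(\cF)$ (extracted directly from the definition of the blowup), condition 3 of Proposition \ref{prop:semi cont} applied to the commutative algebra $C_0(\blup(\cF))$, and then Theorem \ref{thm:cont field}.2 in its locally Hausdorff form. You merely make explicit a step the paper leaves implicit --- that for non-isolated $y$ the fiber $p^{-1}(y)$ is a positive-codimension, hence nowhere dense, submanifold, so the regular locus off the fiber stays dense --- which is a faithful filling-in of the paper's terse argument rather than a different approach.
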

\begin{proof}
By construction of $\blup(\cF)$, the set $\pi^{-1}_{\blup(\cF)}(M_{\reg})$ is dense in $\blup(\cF)$. Since $p$ is a surjective submersion, $p(M_{\reg})$ is an open dense subset of $Y$. Semi-continuity of $C_0(\blup(\cF))$ follows immediately from Proposition \ref{prop:semi cont}. Semi-continuity of $C^*_r\hblup(\cF)$ follows from Theorem \ref{thm:cont field}.2.
\end{proof}
\subsection{Characteristic set and $z$-completion}
In this section, we define the $z$-completion and the characteristic set of a singular foliation. We prove a theorem which is used by the author in various applications, whose proof relies on the blowup construction.
\begin{dfn}Let $\cF$ be a singular foliation. We define\begin{itemize}
 \item $C^*_z\cF:=\frac{C^*\cF}{\ker(\fint)}$.
\item for $x\in M$, the characteristic set of $\cF$ at $x$ denoted $\cT^*\cF_x$ as the set $\bigsqcup_{V\in \blup(\cF)_x}V^\perp\subseteq \cF^*_x$. We also define $\cT^*\cF:=\bigsqcup_{x\in M}\cT^*\cF_x$ with the subspace topology from the inclusion $\cT^*\cF\subseteq \cF^*$.
\end{itemize}
\end{dfn}
The map $\pi_{\blup(\cF)}:\blup(\cF)\to M$ being proper implies that \begin{itemize}
\item Both $\cT^*\cF_x$ and $\cT^*\cF$ are closed subsets of $\cF^*_x$ and $\cF^*$ respectively.
\item  $\cT^*\cF=\overline{\bigsqcup_{x\in M_{\reg}}\cF_x^*}$.
\end{itemize} 
\begin{theorem}Let $Y$ be a smooth manifold, $p:M\to Y$, a surjective submersion, $\cF\subseteq  \Gamma(\ker(dp))$ a singular foliation. We suppose 
\begin{enumerate}
\item $p^{-1}(p(M_{\reg}))=M_{\reg}$.
\item for every $x\in M_{\reg}^c$, $\mathfrak{h}_x=\cF_x$
\item for every $x\in M_{\reg}^c$, the simply connected Lie group integrating $\mathfrak{h}_x$ is amenable. 
\item $C^*\cF$ is half-continuous at every $y\in p(M_{\reg})$.
\end{enumerate}
Then \begin{enumerate}
\item $C^*_z\cF=C^*\cF/I,\quad I=\{a\in C^*\cF:a_y=0\quad\forall y\in p(M_{\reg})\}$.
\item $C^*_z\cF$ is a half-continuous at every $y\in Y$.
\item $\cH(\cF)^x_x$ is a simply connected Lie group integrating $\mathfrak{h}_x$.
\item $C^*_z\cF$ lies in a short exact sequence $$0\to C^*\cF_{M_{\reg}}\to C^*_z\cF\to A\to 0,$$ where $A$ is a $C^*$-algebra fibered over $M_{\reg}^c$. Its fiber at $x\in M_{\reg}^c$ is equal to $C^*\cH(\cF)_x^x/J_x$, where $$J_x=\bigcap_{V\in \blup(\cF)_x}\ker(\pi_V),\quad \pi_V:C^*\cH(\cF)_x^x\to B(L^2(\cH(\cF)_x^x/V)).$$ Here $\pi_V$ is the quasi-regular representation.
\end{enumerate}
\end{theorem}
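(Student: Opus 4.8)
The plan is to exploit the fact that everything in sight is fibered over $Y$ and that, over the regular locus, $\fint$ is an isomorphism, so that all the content is concentrated over the nowhere-dense singular locus; the four statements are then untangled in the order $3,1,2,4$.

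First I would dispatch statement $3$. For $x\in M_{\reg}^c$ hypothesis $2$ gives $\mathfrak{h}_x=\cF_x$, hence $\natural_x=0$ and $F_x=0$, so the leaf through $x$ is the single point $\{x\}$ and consequently $\cH(\cF)_x=\cH(\cF)_x^x$. Because we only work with path-holonomy bi-submersions, $\cH(\cF)_x$ is connected (the remark preceding Theorem \ref{thm:AZ}), so $\cH(\cF)_x^x$ is connected and equals its own identity component, which by Theorem \ref{thm:AZ}.a is the simply connected Lie group with Lie algebra $\mathfrak{h}_x$. Next I would record the fibered picture: since $\cF\subseteq\Gamma(\ker(dp))$, the groupoids $\cH(\cF)$ and $\hblup(\cF)$ are fibered over $Y$, the algebras $C^*\cF$, $C^*\hblup(\cF)$, $C^*_r\hblup(\cF)$ are $C_0(Y)$-algebras, and $\fint$ is $C_0(Y)$-linear; by Theorem \ref{thm:cont field}.1 the fiber of $C^*\cF$ at $y$ is $C^*\cH(\cF)_{p^{-1}(y)}$. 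Hypothesis $1$ forces $p^{-1}(y)\subseteq M_{\reg}$ for $y\in p(M_{\reg})$, where the blowup is trivial ($V=0$ by Proposition \ref{prop:gamma foliation}.3), so the induced fiber map $\fint_y$ is an isomorphism there.

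For statement $1$ I would prove $\ker(\fint)=I$ in two steps. The inclusion $\ker(\fint)\subseteq I$ is immediate: $C_0(Y)$-linearity gives $\fint(a)_y=\fint_y(a_y)$, and since $\fint_y$ is injective for $y\in p(M_{\reg})$, vanishing of $\fint(a)$ forces $a_y=0$ on $p(M_{\reg})$. The reverse inclusion is the heart of the matter: given $a$ with $a_y=0$ for all $y\in p(M_{\reg})$ I must show $\fint(a)$ also vanishes over the singular fibers. Here I would combine amenability with half-continuity. By statement $3$ and hypothesis $3$ each isotropy group $\cH(\cF)_x^x$, $x\in M_{\reg}^c$, is simply connected and amenable; these are the fibers of $\hblup(\cF)$ over $Y\setminus p(M_{\reg})$, so the corresponding restricted groupoids are amenable, Theorem \ref{thm:cont field}.3 identifies the reduced fibers, and Theorem \ref{thm:cont field}.4 together with the half-continuity of $C^*_r\hblup(\cF)$ from Corollary \ref{cor:semicont} upgrades this to half-continuity of $C^*\hblup(\cF)$ over $Y$. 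The decisive point is that a singular fiber of $\hblup(\cF)$ is not an independent datum but a genuine limit of regular fibers: every $V\in\blup(\cF)_x$ is by definition a limit $\phi_{x_n}^{-1}(\mathfrak{h}_{x_n})\to V$ with $x_n\in M_{\reg}$ (this is exactly why $\cT^*\cF=\overline{\sqcup_{x\in M_{\reg}}\cF_x^*}$), and the fiber representation $\pi_V$ on $L^2(\cH(\cF)_x^x/V)$ is accordingly weakly contained in the regular representations $\lambda_{x_n}$, the containment being read off from the continuity of the field of leaves $\mathfrak{d}'s_U^{-1}(V)$ in Lemma \ref{lem:continuity of leaves}. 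This yields $\|\pi_V(\fint(a)_x)\|\le\sup_{x'\in M_{\reg}}\|\lambda_{x'}(a)\|$; since $a$ vanishes over $M_{\reg}=p^{-1}(p(M_{\reg}))$ the right-hand side is $0$, so $\fint(a)=0$ and $I\subseteq\ker(\fint)$.

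Statements $2$ and $4$ then follow. For statement $2$, statement $1$ exhibits $C^*_z\cF=C^*\cF/I$, where $I$ is the $C_0(Y)$-stable ideal of elements vanishing over the dense open set $p(M_{\reg})$; its fibers are $(C^*\cF)_y$ over $p(M_{\reg})$, half-continuous by hypothesis $4$, while over the remaining $y$ the inclusion into the half-continuous algebra $C^*\hblup(\cF)$ forces condition $2$ of Proposition \ref{prop:semi cont}, so $C^*_z\cF$ is half-continuous at every $y\in Y$. For the short exact sequence in statement $4$, I would let $C^*\cF_{M_{\reg}}$ be the ideal attached to the open subgroupoid $\cH(\cF)_{M_{\reg}}$; since $\fint$ is an isomorphism over $M_{\reg}$ this ideal injects into $C^*_z\cF$ and the quotient $A$ is supported over the closed set $M_{\reg}^c$. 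As every $x\in M_{\reg}^c$ is a fixed point, the groupoid over $M_{\reg}^c$ is a bundle of groups, making $A$ a $C_0(M_{\reg}^c)$-algebra whose fiber at $x$ is the image of $C^*\cH(\cF)_x^x$ under $\fint_x$. Because $\hblup(\cF)_{(V,x)}\simeq\cH(\cF)_x^x/\exp(V)$ with reduced representation the quasi-regular representation $\pi_V$, and $\fint_x$ assembles these over all $V\in\blup(\cF)_x$ (amenability giving full $=$ reduced), its kernel is exactly $J_x=\bigcap_{V\in\blup(\cF)_x}\ker(\pi_V)$, so $A_x=C^*\cH(\cF)_x^x/J_x$.

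The main obstacle is the reverse inclusion of statement $1$, equivalently the norm identity $\|\fint(a)\|=\sup_{x\in M_{\reg}}\|\lambda_x(a)\|$. Pointwise half-continuity over $Y$ does not by itself let one pass from vanishing on the dense set $p(M_{\reg})$ to global vanishing, since a $\limsup$ towards a singular point can a priori be fed by \emph{other} singular points. What rescues the argument is the specific geometry of the blowup: the singular fibers are limits of regular ones in the precise sense of Lemma \ref{lem:continuity of leaves}, so the quasi-regular representations at singular points are weakly contained in the regular representations, and the amenability hypotheses $2$ and $3$ are exactly what allow this weak containment to be read at the level of the full $C^*$-algebras through Theorem \ref{thm:cont field}. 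Making this weak-containment step precise is the technical core of the proof.
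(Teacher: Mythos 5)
Your proposal follows the paper's architecture throughout: part 3 exactly as in the paper (Assumption 2 forces $F_x=0$, so the leaf of $x\in M_{\reg}^c$ is $\{x\}$, $\cH(\cF)_x=\cH(\cF)_x^x$ is connected, and Theorem~\ref{thm:AZ} applies); part 2 by viewing $C^*_z\cF$ as a $C_0(Y)$-subalgebra of the half-continuous algebra $C^*\hblup(\cF)$; part 4 via the restriction sequence over $M_{\reg}$, amenability of the singular isotropy, Theorem~\ref{thm:cont field}.1 and .3 to identify maximal and reduced fibers, and the observation that the left regular representations of $C^*\hblup(\cF)$ at $(V,x)$ pull back under $\fint$ to the quasi-regular representations $\pi_V$. (One small overreach: you invoke Theorem~\ref{thm:cont field}.1 for $C^*\cF$ itself, but $\cH(\cF)$ is not locally compact, so that theorem does not literally apply; the paper only uses $C_0(Y)$-linearity of $\fint$, Hypothesis 4, and the equality $\hblup(\cF)_{M_{\reg}}=\cH(\cF)_{M_{\reg}}$ over the regular part, which is all you actually need.)

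The genuine gap is the one you yourself flag: the inclusion $I\subseteq\ker(\fint)$. Your diagnosis is correct --- pointwise half-continuity of $C^*\hblup(\cF)$ over $Y$ cannot by itself convert vanishing of $\fint(a)$ over the dense open set $p(M_{\reg})$ into $\fint(a)=0$, since the $\limsup$ at a singular $y$ can be fed by other singular points (the $C([0,1])$-algebra $C(K)$, $K$ the Cantor set, is half-continuous at every point of $[0,1]$ yet every element vanishes over the dense complement of $K$); in fact the paper's own one-line deduction ``half-continuous at every point, hence $\fint(I)=0$'' elides the same issue. But your substitute --- weak containment of each $\pi_V$ in regular representations $\lambda_{x_n}$, ``read off'' from Lemma~\ref{lem:continuity of leaves} --- is asserted, not proved, and that lemma cannot deliver it: it is a statement about continuity of the field of leaves in a Grassmann bundle, i.e.\ about the smooth structure, and yields no estimate between operator norms of induced representations. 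The step can be closed with tools already in the paper, with no representation-by-representation containment. First, the proof of Corollary~\ref{cor:semicont} gives more than pointwise half-continuity: since $\pi_{\blup(\cF)}^{-1}(M_{\reg})$ is dense in $\blup(\cF)$ and $M_{\reg}=p^{-1}(p(M_{\reg}))$ by Assumption 1, any $f\in C_0(\blup(\cF))$ whose fibers over $p(M_{\reg})$ vanish is zero; running the Hilbert-module argument of Theorem~\ref{thm:cont field}.2 with the dense set $p(M_{\reg})$ in place of $X\setminus\{x\}$ then shows that any $b\in C^*_r\hblup(\cF)$ with $b_y=0$ for all $y\in p(M_{\reg})$ is zero. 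Second, Assumptions 2 and 3 make the restriction of $\hblup(\cF)$ over each singular fiber $p^{-1}(y)$ a bundle of amenable groups, so by Theorem~\ref{thm:cont field}.1 and .3 the fibers of $C^*\hblup(\cF)$ and $C^*_r\hblup(\cF)$ at singular $y$ coincide. Hence for $a\in I$ every fiber of $\fint(a)$ vanishes (regular ones by $C_0(Y)$-linearity, singular ones through the reduced algebra), and since $\norm{b}=\sup_{y}\norm{b_y}$ in any $C_0(Y)$-algebra, $\fint(a)=0$. This is the precise content your weak-containment paragraph gestures at, and the same strengthened density property (not bare half-continuity) is also what underwrites the injectivity of $A_y\to C^*\hblup(\cF)_y$ in part 4.
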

\begin{proof}
\begin{enumerate}
\item The equality \begin{equation}\label{eqn:equ hblup h reg}
\hblup(\cF)_{M_{\reg}}=\cH(\cF)_{M_{\reg}}
\end{equation} together with Assumption 1 imply that $C^*\hblup(\cF)$ is half-continuous at $y\in p(M_{\reg})$. If $y\in p(M_{\reg}^c)$, then Assumption 3, Theorem \ref{thm:cont field}.4, Corollary \ref{cor:semicont} imply that $C^*\hblup(\cF)$ is half-continuous at $y$. Hence $C^*\hblup(\cF)$ is half-continuous at every point. Since the map $\fint$ respects the fibration over $X$, we deduce that $\fint(I)=0$. Hence $I\subseteq \ker(\fint)$. By Eqn. \eqref{eqn:equ hblup h reg}, it follows that $\ker(\fint)\subseteq I$. The equality $C^*_z\cF=C^*\cF/I$ follows.
\item Since $C^*_z\cF$ is a $C^*$-subalgebra of $\hblup(\cF)$, half-continuity follows.
\item This follows from Assumption $2$, Theorem \ref{thm:AZ} and the paragraph preceding it.
\item The short exact sequence exists by Equation \eqref{eqn:equ hblup h reg}. By Assumption 2, $A$ is fibered over $M_{\reg}^c$. Let $A_y$ be the fiber of $A$ over $y\in p(M_{\reg}^c)$. It is the union of all fibers $A_x$ for $x\in p^{-1}(y)$. The map $\fint$ induces a map $\fint:A_y\to C^*\hblup(\cF)_y$. This map is injective because $\fint:C^*_z\cF\to C^*\hblup(\cF)$ is the identity on the regular part and both $C^*_z\cF$ and $C^*\hblup(\cF)$ are half-continuous. Since $\hblup(\cF)_y$ is amenable, by Theorem \ref{thm:cont field}.1, its fiber over $x\in p^{-1}(y)$ is $\hblup(\cF)_x$. Hence $\fint:A_x\to C^*\hblup(\cF)_x=C^*_r\hblup(\cF)_x$ is injective. The result follows because the left regular representations of $C^*\hblup(\cF)$ at $V\in \blup(\cF)_x$ composed with $\fint:C^*\cH(\cF)_x^x\to C^*\hblup(\cF)$ are the quasi regular representations $\pi_V$.\qedhere
\end{enumerate}
\end{proof}
\begin{ex} If $\cH(\cF)_x^x$ is commutative, then $C^*\cH(\cF)_x^x=C_0(\cF_x^*)$ and $A_x=C_0(\cT^*\cF_x)$. More generally if $G=\cH(\cF)_x^x$ is nilpotent, then by the orbit method \cite{KirillovArticle,BrownArticleTopOrbitMethod}, we can identify the spectrum of $C^*G$ with $\mathfrak{h}_x^*/Ad^*(G)$. By \cite[Lemma 2]{LudwigGoodidealart}, the support of $\pi_V$ is equal to $\overline{\bigcup_{g\in G}\mathrm{Ad}^*(g)V^\perp}/Ad^*(G)\subseteq \mathfrak{h}_x^*/Ad^*(G)$. Since the set $\blup(\cF)_x$ is closed under the Adjoint action of $G$ by Theorem \ref{thm:funct blowup} (applied to $\exp(X)$ for $X\in \cF$). It follows that the quotient $A_x$ is the quotient of $C^*G_x$ which corresponds to the closed set $\cT^*\cF_x/Ad^*(G)$. 
\end{ex}
\begin{refcontext}[sorting=nyt]
\printbibliography
\end{refcontext}
{\footnotesize
		 (Omar Mohsen) Paris-Saclay University, Paris, France
		\vskip-10pt e-mail: \texttt{omar.mohsen@universite-paris-saclay.fr}}
\end{document}